\theoremstyle{plain}
\newtheorem{theorem}{Theorem}
\newtheorem{corollary}{Corollary}
\newtheorem{lemma}{Lemma}
\theoremstyle{definition}
\def\diag{{\rm diag}}
\def\sign{\,{\rm sign}}
\def\sing{{\rm sing}}
\def\div{{\rm div}}
\begin{document}
\frenchspacing

\title[Asymptotic control theory for a system of linear oscillators]
{Asymptotic control theory \\ for a system of linear oscillators}

\author{Aleksey Fedorov}
\address{
Institute for Problems in Mechanics, Russian Academy of Sciences \\
119526, Vernadsky av., 101/1, Moscow, Russia \&
Russian Quantum Center \\
143025 Novaya st. 100, Skolkovo, Moscow, Russia \&
Laboratoire de Physique Th{\'e}orique et Mod{\`e}les Statistiques, CNRS and Universit\'e Paris Sud, UMR8626, 91405 Orsay, France}
\email{akf@rqc.ru}

\author{Alexander Ovseevich}
\address{Institute for Problems in Mechanics, Russian Academy of Sciences \\
119526, Vernadsky av., 101/1, Moscow, Russia.} 
\email{ovseev@ipmnet.ru}

\maketitle

\begin{abstract}
We present an asymptotic control theory for a system of an arbitrary number of linear oscillators under a common bounded
control. We suggest a design method of a feedback control for this system. By using the DiPerna--Lions theory of singular
ODEs, we prove that the suggested control law correctly defines the motion of the system. The obtained control is
asymptotically optimal: the ratio of the motion time to zero under this control to the minimum one is close to 1 if the
initial energy of the system is large. The results are partially based on a new perturbation theory of  observable linear
systems.

\medskip\noindent
\textsc{Keywords} maximum principle, reachable sets, linear systems

\medskip\noindent
\textsc{MSC 2010:} 93B03, 93B07, 93B52.
\end{abstract}

\tableofcontents

\section{Introduction}
The problem of time-optimal steering of a given initial state to a given manifold is typical for optimal control theory.
One of the classical achievements in this area is an explicit construction of the minimum time damping of a single linear
oscillator \cite{pont}. This system is governed by the equation
\begin{equation}
    \ddot{x}+x=u, \quad |u|\leq1,
\end{equation}
where $x$ is the position and $u$ is the control. Here the oscillator frequency $\omega$ is assumed to be 1. The optimal
control is of bang-bang type, i.e., it takes values $u=\pm1$. The switching curve, which separates the $u=-1$ domain of
the phase plane from the $u=+1$ domain, consists of unit semicircles centered at points of the form $(2k+1,0)$, where
$k\in\mathbb{Z}$ is an integer. When seen from afar, which is our primary point of view in this paper, the switching
curve looks like the $x$-axis, and the optimal control looks like the dry friction $u(x,\dot x)=-\sign{\dot x}$.

\subsection{Problem statement}
This paper is devoted to a more general and next in complexity problem of minimum-time steering of a system of $N$ linear
oscillators with eigenfrequencies $\omega_i$ under a common bounded control $u$ described by
\begin{equation}\label{oscillators}
\begin{array}{l}
    \dot{x_i}=y_i \\
    \dot{y_i}=-\omega_i^{2}{x_i}+{u}, \quad |u|\leq1, \quad i=1,\dots,N.
\end{array}
\end{equation}
It is probably impossible in principle to obtain an explicit formula for the optimal control in this case. Even a
numerical solution appears difficult.

In this paper, we deal with feedback control and  try to make the duration of the steering as small as possible. We
assume that the steering is possible in principle. This means, according to the Kalman controllability condition
\cite{kalman}, that the eigenfrequencies $\omega_i$ of all oscillators are different.

Our main result is design of an asymptotically optimal and numerically implementable feedback control for system
(\ref{oscillators}) in the non-resonant case, when there are no nontrivial relations between eigenfrequencies of the form
\begin{equation}
\label{resonance}
    \sum_{i=1}^N m_i\omega_i=0, \mbox{ where } 0\neq m=(m_1,\dots,m_N)\in {\mathbb{Z}}^N.
\end{equation}
Here `asymptotic' refers to the large initial energy
\begin{equation}
    E=\frac{1}{2}\sum_{i=1}^{N}{(\dot{x_i}^2+\omega_i^2 x_i^2)}
\end{equation}
of system (\ref{oscillators}). Our control works in the resonant case as well but in that case is not asymptotically
optimal. Still, the ratio of the steering time to the minimum time is uniformly bounded.

System (\ref{oscillators}) can be interpreted in mechanical terms in at least two ways. In the first model, the
components $x_i$ of the state vector are vertical deviations of pendulums attached to a cart moving with bounded
acceleration $u$. In the second model, the components $x_i$ are displacements of the masses attached to the springs
attached to the same cart.

\subsection{Minimum time problem}\label{min_time}
Suppose we need to bring system (\ref{oscillators}) to equilibrium in minimum time. This problem is a particular case of
the minimum time problem for a linear control system
\begin{equation}\label{syst1}
    \dot{x}={A}x+{B}u,
    \quad
    x=(x_1,y_1,\dots,x_N,y_N)^*\in\mathbb{V}={\mathbb{R}}^{2N},
    \quad
    u\in\mathbb{U}={\mathbb{R}},
    \quad
    |u|\leq1,
\end{equation}
where the matrix ${A}$ and the vector ${B}$ are
\begin{equation}\label{syst2}
    {A} = \left( {\begin{array}{*{20}c}
    0           & 1 & &                      &   \\
    { - \omega_1^{2}} & 0& &                        &  \\
    &  & \ddots&                         &   \\
    &  & &0                      & 1  \\
    &  & &{-\omega_N^{2}} & 0  \\
 \end{array} } \right), \qquad
{B} = \left( \begin{gathered}
    0 \hfill \\
    1 \hfill \\
    \vdots \hfill \\
    0 \hfill \\
    1 \hfill \\
\end{gathered}\right).
\end{equation}
As it is well known, the problem is equivalent to the boundary value problem for the Pontryagin maximum principle
corresponding to the Hamiltonian
\begin{equation}
    h(x,\psi)=\langle{Ax,\psi}\rangle+|\langle{B,\psi}\rangle|-1=\max_{|u|\leq1}\{\langle{Ax,\psi}\rangle+\langle{Bu,\psi}\rangle-1\}.
\end{equation}
Here angle brackets denote the standard scalar product in $\mathbb{R}^{2N}$, $|\cdot|$ is the Euclidean norm, and the
maximum is taken over the interval $\{u\in\mathbb{R}:|u|\leq1\}$.

The problem takes the form
\begin{equation}\label{max}
\begin{array}{l}
    \dot{x}={A}x+{B}u, \quad \dot{\psi}=-{A^*}\psi,\\[.5em]
    u=\sign\langle{B,\psi}\rangle, \quad x(0)=x_0, \quad x(T)=0, \quad h(x,\psi)=0.
\end{array}
\end{equation}
We note that system (\ref{max}) is Hamiltonian with $2N$ degrees of freedom and $N+1$ integrals of motion. In order to
define these integrals we use the following notations. Suppose that the momentum $\psi$ is written in the form
$\psi=(\psi_i)$, where $\psi_i=(\xi_i,\eta_i)$, ${i=1,\dots,N}$, $\xi_i$ is the dual variable for $x_i$ and $\eta_i$ is
the dual variable for $y_i$. There are $N$ integrals of the form
\begin{equation}\label{integrals_motion}
I_k=\frac12\xi_k^2+\frac12\omega_k^{2}\eta_k^2, \quad k=1,\dots,N,
\end{equation}
and the Hamiltonian $h(x,\psi)$. These integrals are Poisson commuting. The fact that the Poisson brackets $\{I_k,I_l\}$
are zero is obvious, while the identity $\{I_k,h\}=0$ results from an easy computation. In the case $N=1$, the number of
degrees of freedoms coincides with the number of commuting integrals. This is the basic reason for the existence of an
explicit optimal solution. The same equality is the basic assumption of the Liouville--Arnold theorem on complete
integrability of a Hamiltonian system \cite{Arnold}.

In general, we deal with a nonlinear boundary value problem of dimension $4N$. If the vector $\psi(0)$ is known, then the
control $u$ is also known, and $x(T)$ can be easily found via solution of the Cauchy problem. Therefore, the boundary
problem reduces to a solution of $2N+1$ transcendental equations $x(T)=0$ and $h(x,\psi)=0$ for the $2N+1$-dimensional
vector with components $\psi(0),\,T$. The great difficulty of this problem suggests to resort to approximations.

\subsection{Proposed strategy and main results}\label{suggest}
We present a method based on consideration of the asymptotic behavior of reachable sets of the system. We use both
$T\to\infty$ asymptotic and $T\to0$ asymptotic of reachable sets.

Let us divide the entire phase space of the system into three zones: the high-energy zone, the middle-energy zone, and a
small neighborhood of the equilibrium, i.e., the low-energy zone. In the high-energy zone, we use a control law based on
an asymptotic $T\to\infty$ formula for the support function of reachable sets \cite{ovseev,ovseev2} for system
(\ref{oscillators}). The control can be in principle applied as well in other zones, but then its quasi-optimal
properties are lost. Moreover, the control affects the system like dry friction, so that in some states, where the energy
is not too high, it prevents any motion. More generally, the control might force the system to move in a small
neighborhood of a limit set (attractor) not containing the target, i.e., the equilibrium state. In other words, there
arise basins of attraction; the greater is the upper bound for controls, the larger are the basins.

To prevent getting into an attractor, we use within the middle energy zone a scaled version of the high-energy control
with a reduced amplitude. This makes the basins of attraction located in a smaller neighborhood of the target, so that
the sinking into an attractor cannot happen within high and middle energy zones. This strategy allows the system to reach
a small neighborhood of the equilibrium, where a terminal control scenario is in force.

In the third terminal stage, we consider the asymptotic behavior of reachable sets as $T\to0$. We use important
properties of shapes of the reachable sets: by applying gauge transformations and adding a linear feedback we do not
substantially change the shapes of the reachable sets \cite{ovseev_gonch}. By using these properties, we reduce the
problem of the feedback control design for system (\ref{syst1}) to the design of a feedback control for a canonical
system in Brunovsky form \cite{brun}. Toward this end, we apply a method of control based on common Lyapunov functions
\cite{korobov,korob,anan}.

Let us stress that  our main goal is asymptotic optimality. Therefore, the detailed construction of the control within
finite distance to the equilibrium is of secondary importance.

Our paper is organized as follows. In Section \ref{high}, we define the control and describe its implementation in
detail. In Section \ref{property}, we discuss formal properties of the control within high and medium energy zones. In
particular, we establish a maximum principle of a certain kind for the suggested control. A nontrivial issue of the
nature of the dynamics of the system is discussed in Section \ref{motion}. We prove the existence and uniqueness of the
motion under the control within the framework of the DiPerna--Lions theory. Asymptotic optimality of the control within
high-energy zone is proved in Section \ref{quasioptimal}. In Section \ref{efficiency}, we study efficiency of the
suggested control by using a new technique based on perturbation theory of observable linear systems (see Appendix
\ref{perturb_observ}). Section \ref{attractor} is devoted to the singular arcs of our control. We find the size of a ball
centered at the equilibrium which does not contain any attractor. Section \ref{terminal} describes the design of the
feedback control at the final stage, i.e., in a small neighborhood of the target. We utilize the common Lyapunov
functions technique and demonstrate a few nontrivial features of its application including those of number-theoretical
nature. In Section \ref{match}, we perform the matching of controls defined within different zones. Our main result on
asymptotic optimality is presented in Section \ref{asymp}. In Section \ref{toy}, we illustrate our strategy in the
classical case of a single oscillator. Appendices \ref{support}-\ref{proof_3} contain a number of auxiliary results.

A summary of our results was presented in \cite{ovseev4}.

\section{Basic control: high-energy zone}\label{high}
A well-known geometric interpretation of the maximum principle says that the momentum (adjoint vector) $\psi$ at point
$x$ is the inner normal to the reachable set $\mathcal{D}(T(x))$ \cite{agrachev}.

Here the reachable set $\mathcal{D}(T)$ is the set of ends at time instant $T$ of all admissible trajectories of system
(\ref{syst1})--(\ref{syst2}) starting at the origin at zero time.

\subsection{Asymptotic theory of reachable sets as $T\to\infty$}
We would like to use as momenta the normals to an approximate reachable set. This is possible thanks to the asymptotic
theory of reachable sets for linear systems as developed in \cite{ovseev}.

One of the basic results of Ref. \cite{ovseev} applicable to our system of $N$ oscillators is this: The reachable set
$\mathcal{D}(T)$ equals asymptotically as $T\to\infty$ to the set  $T\Omega$, where $\Omega$ is a fixed convex body. More
precisely:
\begin{theorem}\label{support0}{\rm \cite{ovseev}}
    Suppose that a momentum $p$ is written in the form $p=(p_i)$,
    where $p_i=(\xi_i,\eta_i)$, ${i=1,\dots,N}$,
    $\xi_i$ is the dual variable for $x_i$,
    $\eta_i$ is the dual variable for $y_i$, and
    $z_i=(\eta_i^2+{\omega_i^{-2}}{\xi_i^{2}})^{1/2}$.
    Suppose that system (\ref{syst1})--(\ref{syst2}) is non-resonant, i.e., there are no nontrivial relations (\ref{resonance}).
    Then, the support function $H_T$ of the reachable set $\mathcal{D}(T)$ has as $T\to\infty$ the asymptotic form
    \begin{equation}\label{approxN0}
        {H}_T(p)=T\int\limits_{\mathcal{T}}\left|\sum_{i=1}^N z_{i}\cos\varphi_{i}\right|d\varphi+o(T),
        \quad
        d\varphi=\frac{1}{(2\pi)^N}d\varphi_{1}\wedge\dots\wedge d\varphi_{N},
    \end{equation}
\end{theorem}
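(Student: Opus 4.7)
The plan is to derive an explicit integral representation for $H_T(p)$ and then recognize its large-$T$ behavior as a time-average along a linear flow on a torus, to which classical equidistribution (Weyl/Birkhoff for uniquely ergodic flows) applies precisely under the non-resonance hypothesis \eqref{resonance}.

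First I would write the standard support-function formula for the reachable set of the linear control system $\dot{x}=Ax+Bu$, $|u|\le 1$, starting at the origin:
\[
H_T(p)=\max_{|u(\cdot)|\le 1}\langle p,x(T)\rangle =\int_0^T \bigl|\langle B,e^{(T-s)A^*}p\rangle\bigr|\,ds,
\]
the maximum being attained by $u(s)=\sign\langle B,e^{(T-s)A^*}p\rangle$. Then I would compute $e^{sA^*}$ block-by-block: since each block $A_i^*$ satisfies $A_i^{*2}=-\omega_i^2 I$, one has $e^{sA_i^*}=\cos(\omega_i s)I+\omega_i^{-1}\sin(\omega_i s)A_i^*$. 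Paired against the $i$-th pair $(0,1)$ of $B$, a short computation gives
\[
\langle B,e^{sA^*}p\rangle=\sum_{i=1}^N\Bigl(\eta_i\cos(\omega_i s)+\frac{\xi_i}{\omega_i}\sin(\omega_i s)\Bigr) =\sum_{i=1}^N z_i\cos(\omega_i s-\alpha_i),
\]
where $z_i=(\eta_i^2+\omega_i^{-2}\xi_i^2)^{1/2}$ is exactly the quantity in the statement and $\alpha_i$ are the induced phases. After a change of variable this yields
\[
H_T(p)=\int_0^T \Bigl|\sum_{i=1}^N z_i\cos(\omega_i s-\alpha_i)\Bigr|\,ds.
\]

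Next I would interpret the integrand as the pullback of the continuous function $F(\varphi)=|\sum_i z_i\cos\varphi_i|$ on the torus $\mathcal{T}=(\mathbb{R}/2\pi\mathbb{Z})^N$ along the linear flow $\Phi_s\colon \varphi\mapsto \varphi+s\omega$ with $\omega=(\omega_1,\dots,\omega_N)$, starting from $-\alpha=(-\alpha_1,\dots,-\alpha_N)$. The non-resonance assumption \eqref{resonance} asserts exactly that the frequency vector $\omega$ is rationally independent, which by Weyl's theorem implies that $\Phi_s$ is uniquely ergodic with respect to the normalized Haar measure $d\varphi=(2\pi)^{-N}d\varphi_1\wedge\dots\wedge d\varphi_N$. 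Unique ergodicity then forces, for every continuous $F$ and every initial point $-\alpha\in\mathcal{T}$,
\[
\frac{1}{T}\int_0^T F(\Phi_s(-\alpha))\,ds\longrightarrow \int_{\mathcal{T}}F(\varphi)\,d\varphi\qquad(T\to\infty),
\]
and the limit is independent of $\alpha$ by translation invariance of the Haar measure. Applying this to our $F$ gives exactly the claimed asymptotic \eqref{approxN0}.

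The substantive step is the passage from pointwise time averages to the integral over $\mathcal{T}$, for which non-resonance is essential: otherwise the orbit $\{s\omega\}$ is confined to a proper subtorus and the Haar-measure formula fails. The rest is bookkeeping (the explicit support-function formula and the block computation of $e^{sA^*}$). I would not expect any subtlety in controlling the $o(T)$ remainder: continuity of $F$ plus unique ergodicity of the flow is enough, and if uniformity in $p$ on a compact set is desired later on (which the excerpt does not demand here), one gets it for free since the convergence in the unique-ergodicity theorem is uniform in the initial point and the map $p\mapsto F_p$ is continuous from $\mathbb{R}^{2N}$ to $C(\mathcal{T})$.
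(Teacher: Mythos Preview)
Your proof is correct and follows essentially the same route as the paper's Appendix~I: derive the explicit formula $H_T(p)=\int_0^T\bigl|\sum_i(\eta_i\cos\omega_i t+\omega_i^{-1}\xi_i\sin\omega_i t)\bigr|\,dt$, interpret the integrand as a continuous function on the torus pulled back along the linear winding $t\mapsto(\omega_1 t,\dots,\omega_N t)$, apply equidistribution under the non-resonance hypothesis (the paper cites Arnold's text rather than invoking unique ergodicity by name), and use translation invariance of Haar measure to absorb the phases $\alpha_i$. Your write-up is slightly more detailed in places (the block computation of $e^{sA^*}$, the remark on uniformity), but the argument is the same.
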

We give a sketch of the proof of Theorem \ref{approxN0} in Appendix \ref{support}.

Recall that the support function of any subset $M\subset{\mathbb{R}}^{n}$ is defined as ${H}_{M}(\xi)=\sup_{x\in
M}\langle{\xi, x}\rangle$ and defines the closed convex hull of $M$ uniquely \cite{Schneider}. In particular, the support
function of the convex body $\Omega$ is given by the main term in (\ref{approxN0}):
\begin{equation}\label{approx2N}
    {H}_\Omega(p)=\mathfrak{H}(z)=\int\limits_{\mathcal{T}}\left|\sum_{i=1}^N z_{i}\cos\varphi_{i}\right|d\varphi,
\end{equation} { where the vector}
$z=(z_1,\dots,z_N)\in\mathbb{R}^N$ has components $z_i=(\eta_i^2+{\omega_i^{-2}}{\xi_i^{2}})^{1/2}$.

If $N=1$, we obtain $\mathfrak{H}(z)=\frac2\pi|z|$. In the case $N=2$, the function
\begin{equation}
    \mathfrak{H}(z)=\int\left|z_{1}\cos\varphi_{1}+z_{2}\cos\varphi_{2}\right|d\varphi
\end{equation}
can be expressed via elliptic integrals as follows:
\begin{equation}\label{elliptic_integral}
    \mathfrak{H}(z_1,z_2)=\frac{1}{\pi^2}\int_0^{2\pi}\frac{(z_2^2-z_1^2)d\varphi}{\sqrt{z_2^2-z_1^2\cos^2\varphi}}
    \mbox{ if }
    |z_1|\leq|z_2|.
\end{equation}
(see Appendix \ref{apelliptic}). In general, by substitution $t_i=\cos\varphi_i$ we reduce (\ref{approx2N}) to an
Euler-type integral
\begin{equation}
    \mathfrak{H}(z)=\frac{1}{(2\pi)^N}\int\limits_{\{|t_i|\leq1\}}{\left|\sum_{i=1}^N z_{i}t_{i}\right|}{\prod_{i=1}^N(1-t_i^2)^{-1/2}}dt_1\dots dt_N
\end{equation}
that defines a hypergeometric function in the sense of I.M.~Gelfand \cite{Varchenko}. The function $\mathfrak{H}(z)$ also
has an (one-dimensional) integral representation via the Bessel functions (see Appendix \ref{gothic_H}).

Note that equation (\ref{approx2N}) makes sense even in the resonant case, when there are nontrivial relations between
eigen\-fre\-quen\-cies. In this case however, equation (\ref{approxN0}) does not give an asymptotic formula for the
support function of the reachable set $\mathcal{D}(T)$.

The basic idea of our feedback control is to substitute the set $T\Omega$ for $\mathcal{D}(T)$. The idea works even in
the resonant case, when $T\Omega$ is not an asymptotic approximation of $\mathcal{D}(T)$. Note that a phase vector
$x\in\mathbb{V}={\mathbb{R}}^{2N}$ belongs to the boundary of $T\Omega$ if and only if
\begin{equation}\label{approx3}
    T^{-1}x=\frac{\partial {H}_\Omega}{\partial p}(p)
\end{equation}
for a momentum $p=p(x)$. We notice that the support function ${H}_{\Omega}$ is differentiable, and equation
(\ref{approx3}) has a unique solution up to scaling $p\mapsto\lambda p,\,\lambda>0$, because the boundary of $\Omega$ is
smooth \cite{ovseev2}. The unique solvability of equation (\ref{approx3}) is also proved below in Section
\ref{KuhnTucker}. We discuss the issue of efficient solution of equation (\ref{approx3}) in the next section.

Thus, our {\em basic control} in the high-energy zone is given by
\begin{equation}\label{approx4}
    u(x)=-\sign\langle{B,p(x)}\rangle,
\end{equation}
and it depends on the direction of the vector $p(x)$ only, so that the scaling $p\mapsto\lambda p,$ where $\lambda>0$,
does not affect the control. We emphasize that the minus sign in (\ref{approx4}) is due to the fact that $p(x)$ is the
{\it outer} normal to $T\Omega$ at the point $x$, while the momentum $\psi$ in the Pontryagin maximum principle is the
{\it inner} normal to the exact reachable set.

\subsection{Efficient computation of the control}
In coordinates $x_i,y_i$, equation (\ref{approx3}) takes the form
\begin{equation}\label{approx_coord}
    T^{-1}(x_i,y_i)=z_i^{-1}\left(\frac{\partial {\mathfrak H}}{\partial z_i}\right)\left(\frac{\xi_i}{\omega_i^2},\eta_i\right),\quad i=1,\dots,N,
\end{equation}
where $z_i=(\eta_i^2+{\omega_i^{-2}}{\xi_i^{2}})^{1/2}$, and ${\mathfrak H}(z)$ is given by integral (\ref{approx2N}). To
solve (\ref{approx_coord}), we should first find the point $\mathfrak z$ of the sphere $S^{N-1}$ with
positive-homogeneous coordinates $(z_1:\dots:z_N)$. Here the sphere $S^{N-1}$ is regarded as the set of directions of
non-zero vectors in $\mathbb{R}^N$. To this end, we define the ``energetic'' vector $e=(e_i)\in\mathbb{R}^N,$ where
$e_i=(\omega_i^{2}x_i^{2}+y_i^{2})^{1/2}$, and obtain from (\ref{approx_coord}) that
\begin{equation}\label{z}
    T^{-1}e_i=\frac{\partial {\mathfrak H}}{\partial z_i}(\mathfrak z),\quad i=1,\dots,N.
\end{equation}
Solution of equation (\ref{approx_coord}) gives an inversion of a map from one $2N$-dimensional manifold to another,
while the solution of (\ref{z}) reduces to inversion of a map of $(N-1)$-dimensional manifolds. Still the solution of
(\ref{approx_coord}) reduces easily to the solution of (\ref{z}). Similarly to the master equation (\ref{approx3}), equation
(\ref{z}) has, according to \cite{ovseev2}, a unique solution, which, however, is not a very easy find. Anyway, we obtain
that $T$ is a function of the ``energetic'' vector $e$.

\subsection{Kuhn--Tucker theorem}\label{KuhnTucker}
The Kuhn--Tucker theorem implies that for arbitrary $N$ the search for solutions of (\ref{z}) is equivalent to the
optimization problem
\begin{equation}\label{optim}
    \langle e,z\rangle\to\max \mbox{, provided that }\mathfrak H(z)\leq1,
\end{equation}
and similar approach can be applied to equations (\ref{approx3}) and (\ref{approx_coord}).

It is clear that the constraint $\mathfrak H(z)\leq1$ is equivalent to $\mathfrak H(z)=1$. 
The hypersurface 
$\{\mathfrak H(z)=1\}$ is strictly convex because of the obvious identity
\begin{equation}\label{hessian}
    \left\langle\frac{\partial^2 {\mathfrak H}}{\partial z^2}(z)\xi,\xi\right\rangle=\int\limits_{V(z)}\left(\sum_{i=1}^N\xi_{i}\cos\varphi_{i}\right)^2d\sigma(\varphi),
\end{equation}
where integration is over $V(z)=\left\{\varphi\in\mathcal{T}:f(z,\varphi)=0\right\}$,
\begin{equation}
    f(z,\varphi)=\sum_{i=1}^N z_{i}\cos\varphi_{i},\quad d\sigma(\varphi)=\frac{d\varphi_1\wedge\dots\wedge d\varphi_N}{(2\pi)^N df}
\end{equation}
is the canonical volume element on $V(z)$. Identity (\ref{optim}) implies that, if the vectors $\xi$ and $z$ are not
collinear, then $\left\langle\frac{\partial^2 {\mathfrak H}}{\partial z^2}(z)\xi,\xi\right\rangle$ is strictly positive.
But if the vector $\xi$ is tangent to the hypersurface $\{\mathfrak H(z)=1\}$ at $z$, these two vectors cannot be
collinear. Otherwise, we would obtain that $\left\langle{{\partial {\mathfrak H}}/{\partial z},z}\right\rangle=0$, which
is impossible, since $\left\langle{{\partial{\mathfrak H}}/{\partial z},z}\right\rangle={\mathfrak H}(z)>0$ in view of
the Euler identity. The proved strict convexity of $\{\mathfrak H(z)=1\}$, as it is well-known, implies the uniqueness of
solution of optimization problem (\ref{optim}). Indeed, it follows from the strict convexity of $\{\mathfrak H(z)=1\}$
that the function $f={\mathfrak H}^2$ is strictly convex. At the same time, optimization problem (\ref{optim}) is
equivalent to
\begin{equation}\label{optim2}
    \langle e,z\rangle\to\max \mbox{, provided that } f(z)\leq1.
\end{equation}
If $z_1\neq z_2$ are  solutions to (\ref{optim2}), then 
\begin{equation}
\langle{e,z_1}\rangle=\langle{e,z_2}\rangle \mbox{ and } f(z_i)=1.
\end{equation}
However, this implies that 
\begin{equation}
\left\langle{e,\frac{z_1+z_2}{2}}\right\rangle{=}\langle{e,z_1}\rangle \mbox{ and } f\left(\frac{z_1+z_2}{2}\right)<1,
\end{equation}
which contradicts optimality of $z_i$.

Thus, optimization problem (\ref{optim}) can be solved by well-developed efficient methods, which are still more
difficult than the solution of a scalar transcendental equation. These methods are available, e.g., via Matlab
Optimization Toolbox.

We now obtain from (\ref{z}) the final formula for the momentum:
\begin{equation}\label{impulse}
    ({\xi_i},\eta_i)=\frac{z_i}{e_i}({\omega_i^2}x_i,y_i),\, i=1,\dots,N.
\end{equation}
Thus, if we know the point $\mathfrak {z}=(z_1:\dots:z_N)\in S^{N-1}$, then the direction of the momentum $p(x)$ is
defined by (\ref{impulse}) uniquely. Control (\ref{approx4}) depends only on the direction of the momentum. Therefore, it
can be efficiently found in the form
\begin{equation}\label{control}
    u(x)=-\sign\left(\sum_{i=1}^N{e_i}^{-1}{z_iy_i}\right).
\end{equation}
In the case $N=1$, the control has the form of a dry friction $u=-\sign{y_1}$.

The $\sign$-function in (\ref{control}) is understood as a multivalued map: $\sign(x)=\pm1$ if $x\gtrless0,$ and
$\sign(0)$ might take any value from the interval $[-1,1]$. The precise value of the control in the case of indefinite
sign is sometimes important (see Section \ref{attractor11}). Whatever the precise value is, the control $u(x)$ is not a
continuous function of $x$. Therefore, to define the motion under the control we have to solve ODE with a discontinuous
right-hand side (RHS). This naturally requires a discussion of singular ODEs, which we provide in Section \ref{motion}. In
what follows, we will also use a scaled control $u_U(x)=Uu(x)$ with a smaller amplitude $|U|\leq1$.

\section{Formal properties of the basic control}\label{property}

\subsection{Polar-like coordinate system}
We define a polar-like coordinate system, well suited for repre\-sen\-ta\-tion of the motion under the control $u$. If
$N=1$, we get the proper polar coordinate system in a plane. To this end, we take the boundary $\omega=\partial\Omega$ of
the set $\Omega$ with support function (\ref{approx2N}) as a unit ``sphere''. Every vector $0\neq x\in\mathbb{R}^{2N}$
can be represented uniquely as
\begin{equation}\label{rhophi}
    x=\rho\phi,\mbox{ where }\rho=\rho(x)\mbox{ is a positive factor, and }\phi\in\omega.
\end{equation}
In fact, we have already familiar with (\ref{rhophi}) because  (\ref{approx3}) says exactly the same if $\rho=T$, and
$\phi={\partial H_\Omega}/{\partial p}$. The pair $\rho,\phi$ is the coordinate representation for $x$, and
$\rho(\phi)=1$ is the equation of the ``sphere'' $\omega$. It is important that the set $\omega$ is invariant under free
(uncontrolled) motion of our system (\ref{syst1}). This follows from the similar invariance of the support function
${H}_{\Omega}(p)$ under evolution governed by $\dot{p}=-{A^*}p$. The latter invariance is clear, because the support
function depends only on variables $z_i$, which are integrals of the motion. The invariance of $\omega$ is equivalent to
invariance of the homogeneous function $\rho,$ so that $\left\langle{\partial {\rho}}/{\partial x},Ax\right\rangle=0$.
Therefore, under the control $u$ the total (Lie) derivative of $\rho$ takes the form
\begin{equation}\label{T}
    \dot \rho=\left\langle{\frac{\partial {\rho}}{\partial x},Ax+Bu}\right\rangle=
    \left\langle{\frac{\partial {\rho}}{\partial x},Bu}\right\rangle=-\left|\left\langle{\frac{\partial {\rho}}{\partial x},B}\right\rangle\right|,
\end{equation}
where the last identity holds because ${\partial {\rho}}/{\partial x}$ is the outer normal to the set $\rho\Omega$. Note
that the ``radius'' $\rho$ is monotone nonincreasing. For any other admissible control, we have
\begin{equation}\label{T3}
    \dot\rho\geq-\left|\left\langle{\frac{\partial {\rho}}{\partial x},B}\right\rangle\right|.
\end{equation}
The evolution of $\phi$ by virtue of system (\ref{syst1}) is described by
\begin{equation}\label{xi}
    \dot \phi=A\phi+\frac{1}{\rho}(Bu-\phi \dot \rho)
    =A\phi+\frac{1}{\rho}\left(Bu+\phi \left|\left\langle{\frac{\partial{\rho}}{\partial x},B}\right\rangle\right|\right).
\end{equation}
It is clear that if  $\rho$ is large, then the second term in the RHS of (\ref{xi}) is $O(1/\rho)$ and affects the motion
of $\phi$ over the ``sphere'' $\omega$ only slightly. The conclusion holds for any admissible control, not just for
control (\ref{approx4}).

We note that  $p={\partial {\rho}}/{\partial x}$ is a homogeneous function of degree 0, and, therefore, is a function of
$\phi$. Geometrically speaking, $p$ is the outer normal to the surface $\omega$ at  $\phi$. It follows immediately from
the Euler identity that
\begin{equation}\label{Euler}
    H_\Omega\left(p\right)=\langle{p,\phi}\rangle=\rho(\phi)=1.
\end{equation}
Thus, the function $\rho$ satisfies an eikonal-type equation which is ``dual'' to  equation
$\rho(\frac{\partial{H}}{\partial p})=1$ of the surface $\omega$. Here $H$ stands for $H_\Omega$. We will use Eq.
(\ref{Euler}) in Section \ref{quasioptimal} for averaging the RHS of identity (\ref{T}) with respect to time.

\subsection{Duality transform}
Here we discuss a general duality transformation related to equation (\ref{approx3}). Toward this end we denote the
function $H_\Omega$ just by $H=H(p)$, and the factor $T$ by $\rho(x)$. Then the relation between $H$ and $\rho$ is
similar to the Legendre transformation:
\begin{equation}\label{Legendre}
    \langle x,p\rangle=\rho(x) H(p),\quad \rho(x)=\max_{H(p)\leq1}\langle x,p\rangle,
    \quad
    H(p)=\max_{\rho(x)\leq1}\langle{x,p}\rangle,
\end{equation}
where the correspondence  $x\rightleftarrows p$ has the form
\begin{equation}\label{Legendre2}
    x=\rho(x)\frac{\partial H}{\partial p}(p),\quad p=H(p)\frac{\partial \rho}{\partial x}(x).
\end{equation}
Here $p$ and $x$ are the points where the maximums in (\ref{Legendre}) are attained. The eikonal-type equation
(\ref{Euler}) also holds in this generality. Indeed, by inserting (\ref{Legendre2}) into (\ref{Legendre}) we obtain
\begin{equation*}\label{Legendre3}
    H\left(\frac{\partial \rho}{\partial x}\right)\rho(x) H(p)=\langle x,p\rangle,
    \quad
    \rho\left(\frac{\partial H}{\partial p}\right)\rho(x) H(p)=\langle x,p\rangle,
\end{equation*}
which implies
\begin{equation}\label{Legendre4}
    H\left(\frac{\partial \rho}{\partial x}\right)=1,\quad \rho\left(\frac{\partial H}{\partial p}\right)=1.
\end{equation}

These relations make sense provided that $H$ and $\rho$ are norms, i.e., the homogeneous of degree 1 convex functions
such that the sublevel sets $\{H(p)\leq1\}$ and $\{\rho(x)\leq1\}$ are convex bodies. These sublevels are mutually polar
to each other. In other words, if $\Omega=\{\rho(x)\leq1\}$, and ${\Omega}^{\circ}=\{H(p)\leq1\}$, then
$\Omega=\{x:\langle{x,p}\rangle\leq1,\,p\in{\Omega}^\circ\}$ and vice versa. In the language of the Banach spaces, the
normed spaces $(\mathbb{V},\rho)$ and $(\mathbb{V}^*,H)$ are dual to each other. The derivatives in (\ref{Legendre2})
should be understood as subgradients. If the functions $H$ and $\rho$ are differentiable, equation (\ref{Legendre2}) has
the classical meaning. If one of the functions $H$ and $\rho$ is differentiable and strictly convex, then the other one
is also so.

We notice that apart from the dual pair $(H,\rho)$ there is another related natural dual pair $(\mathfrak H,\mathfrak
R)$, where $H(p)=\mathfrak H(z(p))$, $\rho(x)=\mathfrak R(e(x))$. Here $z(p)=(z_i)$ is the $N$-vector with components
$z_i=(\eta_i^2+{\omega_i^{-2}}{\xi_i^{2}})^{1/2}$, and $e(x)=(e_i)=((\omega_i^{2}x_i^{2}+y_i^{2})^{1/2})$.

From defining relation (\ref{Legendre2}) with $\rho={\partial p}/{\partial x}$, we obtain
\begin{equation}\label{rho0}
    x=\rho\frac{\partial {H}}{\partial p}\left(\frac{\partial\rho}{\partial x}\right).
\end{equation}
Differentiating (\ref{rho0}), we obtain a relation between the second derivatives of the dual functions
\begin{equation}\label{rho02}
    1=\rho\frac{\partial^2 {H}}{\partial p^2}\frac{\partial^2\rho}{\partial x^2}+\frac{\partial\rho}{\partial x}\otimes\frac{\partial {H}}{\partial p},
\end{equation}
or, using more detailed notation, that for any constant vector $\zeta$
\begin{equation}\label{rho03}
    \zeta=\rho\frac{\partial^2 {H}}{\partial p^2}\frac{\partial^2\rho}{\partial x^2}\zeta+\left\langle{\frac{\partial\rho}{\partial x},\zeta}\right\rangle\frac{\partial {H}}{\partial p}.
\end{equation}
Differentiability of functions $H,\rho,\mathfrak{H}$, and $\mathfrak{R}$ is studied in Appendix \ref{rho}.

\subsection{Hamiltonian structure}\label{hamilton}
Here we show that basic control (\ref{approx4}) possesses a Hamiltonian structure. This means that we can extend the
corresponding dynamical system to a canonical one, similar to that of maximum principle (\ref{max}). This requires
understanding the time-evolution of the momentum $p(x)$ involved in (\ref{approx4}). We possess the expression
$p=\frac{\partial\rho}{\partial x}(\phi)$ for the momentum, where the point $\phi$ makes a controlled motion satisfying
(\ref{xi}). It follows from the identity
\begin{equation}\label{hamilton01}
    \left\langle{\frac{\partial\rho}{\partial x},A\phi}\right\rangle=0,
\end{equation}
which expresses the invariance of the ``radius'' under free motion, that for any (constant) vector $\zeta$ we have
\begin{equation}
    \left\langle{\frac{\partial^2\rho}{\partial x^2}\zeta,A\phi}\right\rangle=-\left\langle{\frac{\partial\rho}{\partial x},A\zeta}\right\rangle.
\end{equation}
On the other hand, the total derivative $\langle{\dot p,\zeta}\rangle$ can be written in the form
\begin{equation}
    \left\langle{\frac{\partial^2\rho}{\partial x^2}(A\phi+Bu),\zeta}\right\rangle=\left\langle{\frac{\partial^2\rho}{\partial x^2}\zeta,A\phi+Bu}\right\rangle,
\end{equation}
which is equal to
\begin{equation}
    -\left\langle{\frac{\partial\rho}{\partial x},A\zeta}\right\rangle+\left\langle{\frac{\partial^2\rho}{\partial x^2}\zeta,Bu}\right\rangle=
    -\left\langle{A^*p,\zeta}\right\rangle+\left\langle{\frac{\partial^2\rho}{\partial x^2}Bu,\zeta}\right\rangle.
\end{equation}
Therefore, we arrive at the following equation for time evolution of the momentum:
\begin{equation}\label{attractor_syst2p10}
    \dot p=-A^*p+\frac{\partial^2\rho}{\partial x^2}Bu.
\end{equation}
It is now easy to write down equations of motion of the compound vector $(x,\psi)$, where
$\psi=-\frac{\partial\rho}{\partial x}$ is the ``canonical'' momentum.
\begin{theorem}
The compound vector $(x,\psi)$ satisfies the {\em Hamiltonian} system of a ``maximum principle'' different from the
Pontryagin  principle:
\begin{equation}\label{attractor_syst22p110}
\begin{split}
    \dot x&=Ax+B\sign\left\langle{B,\psi}\right\rangle, \\ 
    \dot \psi&=-A^*\psi+\frac{\partial^2\rho}{\partial x^2} B\sign\left\langle{B,\frac{\partial\rho}{\partial x}}\right\rangle, 
\end{split}
\end{equation}
where the Hamiltonian is
\begin{equation}\label{hamilton2}
    \mathcal{H}=\langle{Ax,\psi}\rangle+|\langle{B,\psi}\rangle|-\left|\left\langle{B,\frac{\partial\rho}{\partial x}}\right\rangle\right|.
\end{equation}
\end{theorem}
We note that $\mathcal{H}=0$ on admissible trajectories, because
\begin{equation}
    |\langle{B,\psi}\rangle|=\left|\left\langle{B,\frac{\partial\rho}{\partial x}}\right\rangle\right| \mbox{ and } \left\langle{Ax,\frac{\partial\rho}{\partial x}}\right\rangle=0
\end{equation}
in view of the invariance of the function $\rho$ under the free motion.

\section{Motion under the basic control}\label{motion}
The control $u(x)$ is not everywhere uniquely defined, and is  a discontinuous function of $x$.

Nevertheless, a well known theorem of Filippov says that the Cauchy problem for the differential inclusion
\begin{equation}\label{sing_sys}
    \dot x=f(x),
    \qquad
    f(x)=Ax-B\sign\left\langle{B,\frac{\partial\rho}{\partial x}}\right\rangle
\end{equation}
is solvable for any initial condition $x(0)$, i.e., there exists a function $x(t)$ that is absolutely continuous, has a
given value at zero, and satisfies (\ref{sing_sys}) at points of differentiability \cite{Filippov}. This follows from the
basic properties of the function $f(x)$:
\begin{itemize}
    \item[A:] it grows linearly $|f(x)|\leq C(1+|x|)$,
    \item[B:] its values are convex compacts,
    \item[C:] it is semicontinuous as a multivalued map: if $y_n\in f(x_n)$ and $x_n\to x$, then $y\in f(x)$, where $y$ is any limit point of the sequence $y_n$.
\end{itemize}
However, the Filippov theorem does not guarantee the {\it uniqueness} of solution of the Cauchy problem. In particular,
this theorem does not allow to define a motion $x\mapsto\phi_t(x)$ under control $u(x)$ in the phase space, because the
very concept of motion stipulates uniqueness.

In this section, we show nonetheless that the motion under the control  can be defined uniquely. This is done in terms of
the DiPerna--Lions theory \cite{diperna}. First, a slight extension \cite{ovseev_diperna} of the DiPerna--Lions theory
allows one to define the motion under the singular Hamiltonian system (\ref{attractor_syst22p110}) rigorously.
\begin{theorem}\label{hamiltonian_motion}
    Consider a (singular) linear Cauchy problem for the following \underline{transport} equation in $\mathbb{R}^n$:
    \begin{equation}\label{cauchy}
        \frac{\partial v}{\partial t}=\sum b_i(x)\frac{\partial v}{\partial x_i},
        \quad
        u(x,0)=u(x)
    \end{equation}
    such that the extended DiPerna--Lions conditions are met:
    \begin{equation}\label{dip_le2a2}
        \div b\in L^\infty, \quad  b\in {W_{ *\,{\rm  loc}}^{1,1}}=BV_{\rm  loc}, \quad \frac{b(x)}{1+|x|}\in L^\infty+L^1,
    \end{equation}
    where $BV_{\rm  loc}={W_{*\,{\rm loc}}^{1,1}}$ is the Sobolev space of locally integrable functions such that their first derivatives are locally finite measures, and the rest of the notations is standard.
    Then there exists a measurable flow $x\mapsto x(t)=\phi_t(x)$ such that if $v(x)$ is a bounded measurable function, the function $v(x,t)=v(\phi_t(x))$ is the unique renormalized solution of the Cauchy problem (\ref{cauchy}).
\end{theorem}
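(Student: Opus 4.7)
The plan is to derive Theorem \ref{hamiltonian_motion} from the BV-extension of the DiPerna--Lions theory, in the form presented in \cite{ovseev_diperna}, which itself builds on Ambrosio's generalization to vector fields of bounded variation. Three ingredients are needed: existence and uniqueness of renormalized solutions of the transport equation (\ref{cauchy}); passage from those renormalized solutions to a measurable flow; and a global-in-time estimate preventing trajectories from escaping to infinity.

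First I would recall the notion of renormalized solution: a bounded measurable $v(x,t)$ is renormalized if, for every $\beta\in C^1(\mathbb{R})$ with bounded derivative, $\beta(v)$ satisfies the same transport equation in the sense of distributions. To prove uniqueness of such solutions under the hypotheses (\ref{dip_le2a2}), I would mollify $b$ by convolution with a family $\rho_\varepsilon$, producing $b_\varepsilon$, and then estimate the commutator $[b\cdot\nabla,\,\rho_\varepsilon*]v$. In the classical Sobolev case $b\in W^{1,1}_{\rm loc}$ this commutator tends to zero in $L^1_{\rm loc}$ by the DiPerna--Lions lemma; in the $BV_{\rm loc}$ case the mollifier must be chosen anisotropically, adapted to the local direction of the singular part of the distributional derivative $Db$, after which Ambrosio's commutator estimate again gives vanishing in the limit. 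Combined with $\div b\in L^\infty$, this yields uniqueness; existence of renormalized solutions then follows by a standard smoothing-and-compactness argument.

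Second, I would construct the flow via the Ambrosio scheme. The dual continuity equation $\partial_t\mu+\div(b\mu)=0$ is well-posed in the class of measures absolutely continuous with respect to Lebesgue measure, with density bounds controlled by $\|\div b\|_\infty$. Testing this well-posedness against Dirac data and exploiting the uniqueness proved above, one extracts a measurable assignment $x\mapsto\phi_t(x)$ such that the pushforward of Lebesgue measure by $\phi_t$ remains absolutely continuous, the semigroup identity $\phi_{t+s}=\phi_t\circ\phi_s$ holds almost everywhere, and the formula $v(x,t)=v(\phi_t(x))$ recovers the unique renormalized solution for any bounded measurable initial datum $v$.

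Third, the growth hypothesis $b(x)/(1+|x|)\in L^\infty+L^1$ furnishes, through a Gronwall estimate applied to $\log(1+|\phi_t(x)|)$ for the smooth approximants $b_\varepsilon$ and preserved under the limit, the nonblowup needed for $\phi_t$ to be defined for all $t$. I expect the principal obstacle to be the commutator estimate in the BV setting: unlike the Sobolev case, the difference $\rho_\varepsilon*(b\cdot\nabla v)-b_\varepsilon\cdot\nabla(\rho_\varepsilon*v)$ does not vanish by dominated convergence alone, and one must exploit the Alberti rank-one structure of the singular part of $Db$ together with a carefully chosen anisotropic kernel. Once this estimate is in place, the remaining steps follow the standard template of the DiPerna--Lions--Ambrosio theory applied to the Hamiltonian vector field underlying (\ref{attractor_syst22p110}).
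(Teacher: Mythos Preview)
The paper does not actually prove Theorem~\ref{hamiltonian_motion}. Immediately after stating Corollary~\ref{phase_flow} it says: ``We provide details of the proof of Theorem~\ref{motion2}, and will not dwell on proofs of Theorems~\ref{hamiltonian_motion}, \ref{motion3} because the latter are rather standard.'' The theorem is presented as a slight extension of the DiPerna--Lions theory taken from \cite{ovseev_diperna}, and the authors treat it as a quotable black box.

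Your outline is precisely the ``rather standard'' argument the authors have in mind: commutator estimates for mollified transport equations (with the Ambrosio anisotropic refinement in the $BV_{\rm loc}$ case), construction of the regular Lagrangian flow via well-posedness of the continuity equation and density bounds coming from $\div b\in L^\infty$, and the growth condition $b/(1+|x|)\in L^\infty+L^1$ to prevent finite-time blowup. There is nothing to compare against in the paper itself, and your sketch is consistent with the content of \cite{diperna} and \cite{ovseev_diperna} that the paper cites for this result.
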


Recall that DiPerna and Lions defined the renormalized solution of the Cauchy problem as a weak solution $v$ of the
problem such that for any smooth function $\beta:\mathbb{R}\to\mathbb{R}$ the function $\beta(v)$ is also a weak
solution. We note that for any Hamiltonian system the divergence is identically zero. The other conditions
(\ref{dip_le2a2}) can be also easily checked for singular Hamiltonian system (\ref{attractor_syst22p110}).

\begin{corollary}\label{ham_setup}
    The Cauchy problem for the transport equation that corresponds to Hamiltonian system (\ref{attractor_syst22p110}) and a bounded initial condition $v(x,p)$ has a unique renormalized solution $v$.
    The solution has the form $v(x,p,t)=v(\phi_t(x,p))$, where $\phi_t:\mathbb{R}^{4N}\to\mathbb{R}^{4N}$, $t\in\mathbb{R}$, is a uniquely defined measurable flow.
    Each curve $t\mapsto (x(t),p(t))=\phi_t(x,p)$ is absolutely continuous and satisfies (\ref{attractor_syst22p110}).
\end{corollary}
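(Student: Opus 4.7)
The plan is to view the corollary as a direct consequence of Theorem \ref{hamiltonian_motion} applied in ambient dimension $n=4N$ to the vector field $b(x,p)$ given by the right-hand side of (\ref{attractor_syst22p110}). The conclusion of that theorem --- existence of a measurable flow $\phi_t$, the representation $v(x,p,t)=v(\phi_t(x,p))$ of the unique renormalized solution of the associated transport equation, and absolute continuity of individual trajectories satisfying (\ref{attractor_syst22p110}) a.e.\ --- is exactly the content of the corollary. So the whole task reduces to verifying the three conditions in (\ref{dip_le2a2}) for this specific $b$.

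The first condition, $\div b\in L^\infty$, comes for free from the Hamiltonian structure established in Section \ref{hamilton}: writing $b=(\partial\mathcal{H}/\partial p,\,-\partial\mathcal{H}/\partial x)$ and using equality of mixed partials, $\div b$ vanishes distributionally wherever $\mathcal{H}$ is classically $C^2$, and the exceptional set --- a union of two smooth hypersurfaces on which the $\sign$-factors jump --- has Lebesgue measure zero. For the linear-growth requirement, the terms $Ax$ and $-A^*p$ are obviously linear, the $\sign$-factors are bounded, while $\partial^2\rho/\partial x^2$ is homogeneous of degree $-1$ because $\rho$ is positive-homogeneous of degree $1$ and $C^\infty$ off the origin (by smoothness of $\partial\Omega$, noted after (\ref{approx3})). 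The resulting $|x|^{-1}$-singularity at the origin is locally integrable in $\mathbb{R}^{2N}$, so it supplies the $L^1$ piece in the splitting $L^\infty+L^1$, while the linear part contributes the $L^\infty$ piece.

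The main obstacle is the $BV_{\rm loc}$ regularity of $b$. Each of the two $\sign$-factors jumps across a smooth hypersurface --- $\{\langle B,p\rangle=0\}$ for the first, and the off-origin smooth preimage of that same hyperplane under $x\mapsto\partial\rho/\partial x$ for the second --- so each contributes a jump of bounded total variation on compacts. The Hessian $\partial^2\rho/\partial x^2$, being the Hessian of a convex degree-one homogeneous function with $C^\infty$ angular profile, has distributional third derivatives that are locally finite matrix-valued Radon measures, hence lies in $BV_{\rm loc}(\mathbb{R}^{4N})$ (the $p$-dependence is trivial). The final step is to verify that the pointwise product of a bounded BV-jump factor and a Hessian of locally finite variation remains in $BV_{\rm loc}$; this reduces to a standard product rule for BV functions, workable because off the origin the jump hypersurfaces lie in the region where the Hessian is smooth and bounded, so only the behaviour near the origin needs a separate check, handled by the integrability of the $|x|^{-1}$ singularity.
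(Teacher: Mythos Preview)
Your approach matches the paper's: apply Theorem~\ref{hamiltonian_motion} in dimension $4N$ and verify (\ref{dip_le2a2}). The paper's own argument is a single sentence --- the divergence vanishes because the system is Hamiltonian, and ``other conditions can also be easily checked'' --- so you are supplying considerably more detail than the paper does.

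Two of those details are off, however. You assert that $\rho$ is $C^\infty$ off the origin, citing the remark after (\ref{approx3}) on smoothness of $\partial\Omega$; but Appendix~\ref{rho} of the paper shows that $\partial^2\rho/\partial x^2$ is merely \emph{bounded} on $\{\rho=1\}$ and is \emph{discontinuous} at points where some energy component $e_i(x)=0$. So $\rho$ is at best $C^{1,1}$ away from $0$, and your $BV_{\rm loc}$ argument via third derivatives of a smooth angular profile does not go through as written. The conclusion is still plausible --- a bounded Hessian with jump-type discontinuities confined to a finite union of hypersurfaces is $BV_{\rm loc}$ --- but the justification has to come from that structure, not from assumed smoothness. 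Second, local integrability of $|x|^{-1}$ in $\mathbb{R}^{2N}$ does not by itself place the corresponding term in $L^1(\mathbb{R}^{4N})$: the singularity is constant along the $p$-fibres, so the global $L^1$ norm diverges. The splitting into $L^\infty+L^1$ on the full $4N$-dimensional space needs more care --- a point the paper also leaves unverified.
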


This corollary is general and useful, but it does not define any flow in the phase space $\mathbb{R}^{2N}$ of system
(\ref{sing_sys}) because in the extended symplectic space $\mathbb{R}^{4N}$ the phase space has measure zero.

The Cauchy problem for the transport equation related to ODE (\ref{sing_sys}) is as follows:
\begin{equation}\label{transport}
\frac{\partial v}{\partial t}=\left\langle Ax-B\sign\left\langle{B,\frac{\partial\rho}{\partial
x}(x)}\right\rangle,\frac{\partial v}{\partial x}\right\rangle,\quad v(x,0)=v(x).
\end{equation}
Our main result claims that in the phase space ${\mathbb{R}}^{2N}$ of the system (\ref{sing_sys}) we can define a
\textit{semiflow} which is continuous, uniquely defined everywhere, and it is related to the transport equation
(\ref{transport}) in a way the flow from Theorem \ref{hamiltonian_motion} is related to (\ref{cauchy}):
\begin{theorem}\label{motion2}
    There exists a  continuous semiflow  $x\mapsto x(t)=\phi_t(x),\,t\geq0$ such that if $v(x)$ is a bounded measurable function,
    the function $v(x,t)=v(\phi_t(x))$ is the unique renormalized solution of the Cauchy problem for the transport equation
 (\ref{transport}).
    Moreover, each curve $t\mapsto x(t)$ is absolutely continuous, and
    \begin{equation}\label{inclusion}\dot
        x(t)=Ax(t)-B\sign\left\langle{B,\frac{\partial\rho}{\partial x}(x(t))}\right\rangle,\quad x(0)=x,
    \end{equation}
    where the last equation is to be understood as a differential inclusion because the RHS is multivalued:
    $\sign(0)=[-1,1].$
\end{theorem}

The main advantage of Theorem \ref{motion2} is the continuity of the flow $\phi_t(x)$ with respect to $x$. This
continuity implies in particular that the flow is defined uniquely everywhere, although the control $u(x)$ is defined
uniquely only outside the hypersurface $\{\langle{B,{\partial\rho}/{\partial x}\rangle}=0\}$. A similar phenomenon was discovered
by I.A. Bogaevskii \cite{bogaev} for gradient differential equations $\dot x=-{\partial f}/{\partial x}$, where $f$
is a convex function. We will see in the proof of Theorem \ref{motion2} below that differential equation
(\ref{inclusion}) resembles the gradient differential equation. More precisely, the singular part of the RHS of
(\ref{inclusion}) has the form $-\alpha(x)\frac{\partial f}{\partial x},$ where $\alpha$ is a smooth nonnegative
symmetric matrix, while  $f$  is a (nonsmooth) convex function. Moreover, the quadratic form
$\langle\alpha(x)\xi,\xi\rangle+\langle x,\xi\rangle^2$ is strictly positive, and the singular part of (\ref{inclusion})
is invariant under scaling $x\mapsto\lambda x$ of the phase space. Under these circumstances it is possible to deduce
differential inequalities for 
\begin{equation}
\left\langle\alpha(x)\frac{\partial v}{\partial x},\frac{\partial v}{\partial x}\right\rangle \mbox{ and } \left\langle x,\frac{\partial v}{\partial x}\right\rangle^2, 
\end{equation}
where $v$ is a solution of (\ref{transport}), that are sufficiently
powerful to establish an a priori bound for the Lipschitz constant of $v$ in any domain of the form $\{(t,x)\in{\mathbb
R}^{2N+1}:|\phi_t(x)|\geq c\}$.

\begin{proof}
We confine ourselves to a proof of  existence of a continuous bounded solution of the transport equation
(\ref{transport}), which is obtained as a limit of classical solutions of regularized equations. Other statements can be
proved using standard techniques from \cite{diperna,ovseev_diperna}. The proof is quite long, so for the sake of clarity
we divide it into a sequence of steps.

{\bf I. Approximation by a smooth problem.} We use two approximation scales: one is controlled by parameter $n\to\infty$
such that the smooth convex function $m_n:\mathbb{R}\to\mathbb{R}$ is a uniform approximation of the function
$x\mapsto|x|$. Then, the derivative $s_n=m_n'$ approximates the $\sign$-function in $L_1$. Note that $xs_n(x)\geq0$ for
any $x\in\mathbb{R}$. Another scale is controlled by the parameter $\delta\downarrow0$, and a particular choice of the
value of $\delta$ means that we freeze the motion under system (\ref{sing_sys})  within the $\delta$-neighborhood
$U_\delta=\{\rho(x)\leq\delta\}$ of zero wrt distance $\rho$. In other words, we approximate ODE (\ref{sing_sys}) by the
nonsingular equation
\begin{equation}\label{sing_sys2}
    \dot x=Ax- Bs_n\left(\left\langle B,\frac{\partial\rho}{\partial x}\right\rangle\right)
\end{equation}
in the domain $V_\delta=\{x\in\mathbb{R}^{2N}:\rho(x)\geq\delta\}$. It is important that all the neighborhoods $U_\delta$
are invariant under the phase flow of (\ref{sing_sys2}) for positive times because the radius-function $\rho$ is
nonincreasing along the phase trajectories. Indeed, because of the analogue of equation (\ref{T}):
\begin{equation}\label{T2}
    \dot \rho=-s_n\left(\left\langle\frac{\partial {\rho}}{\partial x},B\right\rangle\right)\left\langle\frac{\partial {\rho}}{\partial x},B\right\rangle\leq0.
\end{equation}

{\bf II. Gradient form.} We rewrite equation (\ref{sing_sys2}) in the gradient form by using identity (\ref{rho02}). 
It implies that
\begin{equation}\label{identity}
    Bs_n\left(\left\langle{B,\frac{\partial\rho}{\partial x}}\right\rangle\right)=\rho\alpha(x)\frac{\partial}{\partial x}m_n\left(\left\langle{B,\frac{\partial\rho}{\partial x}}\right\rangle\right)+
    xs_n\left(\left\langle{B,\frac{\partial\rho}{\partial x}}\right\rangle\right)\left\langle{B,\frac{\partial\rho}{\partial x}}\right\rangle,
\end{equation}
which can be regarded as an approximation to
\begin{equation}\label{identity2}
    B\sign\left\langle{B,\frac{\partial\rho}{\partial x}}\right\rangle=\rho\alpha(x)\frac{\partial}{\partial x}\left|\left\langle{B,\frac{\partial\rho}{\partial x}}\right\rangle\right|+
    x\left|\left\langle{B,\frac{\partial\rho}{\partial x}}\right\rangle\right|,
\end{equation}
where $\alpha(x)=\frac{\partial^2 H}{\partial p^2}$ and $H=H_\Omega$. In particular, the ODE takes the following form:
\begin{equation}\label{ODE}
    \dot x=F(x)=f(x)-g(x)\frac{\partial}{\partial x}m_n\left(h(x)\right) \mbox{ if }x \mbox{ is in the complement $V_\delta$ of $U_\delta$},
\end{equation}
\begin{equation}\label{ODE-outside}
    \dot x=0 \mbox{ if }x \mbox{ is in  $U_\delta$}.
\end{equation}
Here the functions
\begin{equation}
\begin{split}
    &f(x)=Ax-xs_n\left(\left\langle{B,\frac{\partial\rho}{\partial x}}\right\rangle\right)\left\langle{B,\frac{\partial\rho}{\partial x}}\right\rangle,
    \\
    &g(x)=\rho\alpha(x),
    \quad 
    h(x)=\left\langle{B,\frac{\partial\rho}{\partial x}}\right\rangle
\end{split}
\end{equation}
are rather smooth: they are locally Lipschitz outside zero. equations (\ref{ODE})--(\ref{ODE-outside}) form an
approximation to (\ref{sing_sys}) rewritten in the form
\begin{equation}
    \dot x=F(x)=f(x)-g(x)\frac{\partial}{\partial x}|h(x)|,
\end{equation}
where 
$$
	f(x)=Ax-x\left|\left\langle{B,\frac{\partial\rho}{\partial x}}\right\rangle\right|, 
$$
while $g(x)$ and $h(x)$ are the same as above.

{\bf III. Derived equations.} It is important that the matrix $g=\rho\alpha$ is symmetric and nonnegative. Below we omit
the subscript $n$. The corresponding transport equation takes the form
\begin{equation}\label{transport_ODE}
    \frac{\partial v}{\partial t} =f_iv_i-g_{ij}h_jv_is(h)=F_iv_i,
\end{equation}
where $v_i=\frac{\partial}{\partial x_i}v$, $h_i=\frac{\partial}{\partial x_i}h$, $s(h)=\sign h$,
$F_i=f_i-g_{ij}h_js(h)$, and  we use Einstein's notation for summation. By differentiation, we obtain the following
equation for vector-function $V$ with components $v_k$:
\begin{equation}\label{transport_ODE_k}
    \frac{\partial v_k}{\partial t} =F_iv_{k,i}+f_{i,k}v_i -g_{ij,k}h_iv_is(h)-g_{ij}h_{jk}v_is(h)-g_{ij}h_jh_kv_i\delta(h),
\end{equation}
where $v_{k,i}=\frac{\partial v_k}{\partial x_i}$, $h_{jk}=\frac{\partial^2 h}{\partial x_j\partial x_k}$,
$g_{ij,k}=\frac{\partial g_{ij}}{\partial x_k}$, and $\delta=\delta_n$ denotes $m_n^{\prime\prime}$. equation
(\ref{transport_ODE_k}) is again a transport equation with extra terms
$f_{i,k}v_i-g_{ij,k}h_iv_is(h)-g_{ij}h_{ik}v_is(h)-g_{ij}h_ih_kv_i\delta(h)$ in the RHS. Fortunately, the most
``dangerous'' and singular term $\sigma_k=g_{ij}h_ih_kv_i\delta(h)$ has a positivity property:
\begin{equation}\label{positivity}
    g_{kl}v_l\sigma_k=g_{kl}h_kv_lg_{ij}h_jv_i\delta(h)=\left(\sum g_{kl}h_kv_l\right)^2\delta(h) \mbox{ is a positive measure.}
\end{equation}

{\bf IV. Differential inequalities.} All the other terms are linear functions of $V$ with coefficients  bounded outside
any neighborhood of zero. This implies that $w=(gV,V)=g_{kl}v_lv_k$ is a kind of quadratic Lyapunov function:
\begin{equation}\label{Lyapunov_ODE_k}
    \frac{\partial w}{\partial t} \leq F_iw_{i}+LW,
\end{equation}
where $L$ is a function uniformly bounded outside any neighborhood of zero, $W=|V|^2=\sum v_k^2$. Since the matrix
$g=\rho\alpha$ is not strictly positive definite, $W$ cannot be estimated via $w$, and equation (\ref{Lyapunov_ODE_k}) is
insufficient for establishing an a priori bound for $w$, not to mention $W$. Nonetheless, we can use the estimate
\begin{equation}\label{bound_g}
    W=\sum v_k^2\leq C\left(\left(\sum x_kv_k\right)^2+\langle{gV,V}\rangle\right),
\end{equation}
where $C$ is a positive function bounded outside any neighborhood of zero. The bound holds because the kernel of the
matrix $g(x)$ is the one-dimensional subspace of the phase space, generated by $x$. In view of equation (\ref{bound_g}),
we have to find an estimate for $z=\sum x_kv_k=Ev,$ where $E$ is the Euler operator $Ev=\sum x_k\frac{\partial
v}{\partial x_k} $. By applying the Euler operator to equation (\ref{transport_ODE_k}), we obtain:
\begin{equation}\label{transport_Euler}
    \frac{\partial z}{\partial t} =F_iEv_i+(EF_i)v_i=F_iz_i-F_iv_i+(EF_i)v_i.
\end{equation}
Here we use the commutation relation
\begin{equation}
    \frac{\partial }{\partial x_i}E=E\frac{\partial }{\partial x_i}+\frac{\partial}{\partial x_i}
\end{equation}
which implies that $Ev_i=z_i-v_i$. It is easy to compute $EF_i$: The function $F(x)=Ax- Bs\left\langle
B,\frac{\partial\rho}{\partial x}\right\rangle$ is clearly the sum of the homogeneous functions $Ax$ and
$-Bs\left\langle{B,\frac{\partial\rho}{\partial x}}\right\rangle$ of degrees 1 and 0. Therefore, $EF_i$ is a locally
bounded function. Relation (\ref{transport_Euler}) now implies that
\begin{equation}\label{transport_Euler2}
    \frac{\partial y}{\partial t}\leq F_iy_i+C'W,
\end{equation}
where $y=z^2$, and $C'$ is a locally bounded function. equation (\ref{bound_g}) says that $W\leq C\left(y+w\right)$.
Therefore, by summing inequalities (\ref{Lyapunov_ODE_k}) and (\ref{transport_Euler2}) we obtain that
\begin{equation}\label{Lyapunov_ODE_k2}
    \frac{\partial Y}{\partial t} \leq F_iY_{i}+MY,
\end{equation}
where $Y=w+y$ and the function $M$ is locally bounded outside zero uniformly wrt the scale $n$.

{\bf V. Lipschitz bounds.} equation (\ref{Lyapunov_ODE_k}) is the crucial estimate that enables us to show that the flow
$x\mapsto\Phi_t(x)=\Phi_{n,t}(x)$ corresponding to equation (\ref{ODE}) is locally Lipschitz. Importantly the
corresponding Lipschitz constant does not depend on the approximation scale $n$. Therefore, by passing to the limit
$n\to\infty$ we conclude that there exists the Lipschitz limit of $\Phi_{n,t}$, which defines the measurable semiflow
$\phi_t(x)$ of Theorem \ref{motion2} within $V_\delta$. Since $\delta$ is arbitrary, this proves in particular that the
map $x\mapsto\phi_t(x)$ is continuous if $x\neq0$ and $\phi_t(x)\neq0$.

{\bf VI. Continuity near zero.} It is in fact obvious that the map $x\mapsto\phi_t(x)$ is continuous at zero, because the
flow $\phi$ maps any neighborhood $U_\delta$ of zero into itself. It remains to consider the case $x\neq0,\,\phi_t(x)=0$.
Put $\tau=\inf\{t>0:\phi_t(x)=0\}$. It suffices to show that $\phi_\tau(y)$ is close to $\phi_\tau(x)=0$ if $y$ is
sufficiently close to $x$. We know already that for any $\epsilon>0$ the point $\phi_{\tau-\epsilon}(x)$ depends on $x$
continuously. On the other hand, it is obvious that the map $t\mapsto\phi_t(y)$ is uniformly Lipschitz for $y$ in a
neighborhood of $x$. Therefore, $|\phi_\tau(y)-\phi_\tau(x)|\leq
C|\epsilon|+|\phi_{\tau-\epsilon}(y)-\phi_{\tau-\epsilon}(x)|$. Since $\epsilon$ is arbitrary and
$|\phi_{\tau-\epsilon}(y)-\phi_{\tau-\epsilon}(x)|$ is arbitrarily small if $y$ is sufficiently close to $x$, the
continuity is proved.
\end{proof}

\noindent{\bf Remark.} One can prove the I.A.~Bogaevskii theorem \cite{bogaev} on continuous dependence of solutions to
gradient differential equations $\dot x=-\frac{\partial f}{\partial x}$ on initial conditions, where $f$ is a convex
function, in a similar but simpler way. The crucial differential inequality for the solution $v$ of the corresponding
transport equation has the form
\begin{equation}
    \frac{\partial w}{\partial t}= -\left\langle\frac{\partial w}{\partial x},\frac{\partial f}{\partial x}\right\rangle
    -2\left\langle\frac{\partial^2 f}{\partial x^2} \frac{\partial v}{\partial x},\frac{\partial v}{\partial x}\right\rangle\leq
    -\left\langle\frac{\partial w}{\partial x},\frac{\partial f}{\partial x}\right\rangle,
\end{equation}
where $w=\left|\frac{\partial v}{\partial x}\right|^2$, since $\frac{\partial^2 f}{\partial x^2}$ is a measure with
positive-definite matrix values.

\section{Asymptotic optimality of the basic control}\label{quasioptimal}

We begin with heuristic arguments. Assume that $\rho=\rho(x)$ is large, where $\rho$ is the radius-function defined in
Section \ref{property}, and that there are no resonances. By neglecting the second term in the RHS of (\ref{xi}), we get
the free motion of the vector $\phi$ governed by $\dot \phi=A\phi$. It follows from the invariance of the function $\rho$
under uncontrolled motion that the motion of $p={\partial {\rho}}/{\partial x}$ with the same accuracy is governed by the
Pontryagin equation for adjoint variables: $\dot p=-A^*p$. This follows from the Lipschitz property of the function
$\frac{\partial {\rho}}{\partial x}$, which in turn follows from the boundedness of the Hessian $\frac{\partial^2
{\rho}}{\partial x^2}$ on the ``sphere'' $\rho(x)=1$ (see Appendix \ref{rho}). The averaging amounts to finding
\begin{equation}
    \lim\limits_{\tau\to\infty}\frac1\tau\int_0^\tau \left|\left\langle p(t),B\right\rangle\right|dt.
\end{equation}
According to \cite{ovseev}, this average is the value $H_\Omega\left(p\right)$ of the support function, where $p$ is an
arbitrary point of the curve $p(t)$. By virtue of the eikonal equation (\ref{Euler}), the last expression equals 1, and
therefore, ``on average'' $\dot \rho=-1$. Using the same approximation, we obtain for any admissible control that $\dot
\rho\geq-1$ in view of (\ref{T3}). The terminating condition for the controlled motion has the form $\rho=0$. Thus,
within the framework of the assumed approximation, control (\ref{approx4}) is optimal.

\subsection{Asymptotic optimality}
A precise statement of the asymptotic optimality of control (\ref{approx4}) is as follows:
\begin{theorem}\label{main_approx}
    Suppose there are no resonances, {\it i.e.}, Eqs. (\ref{resonance}) do not hold. Consider evolution (\ref{T}) of $\rho$ under control (\ref{approx4}).
    Let
    \begin{equation}
        M=\min\{\rho(0),\rho(T),T\}.
    \end{equation}
    Then as $M\to+\infty$ we have
    \begin{equation}\label{approx_T}
        {(\rho(0)-\rho(T))}/{T}=1+o(1).
    \end{equation}
    Under any other admissible control,
    \begin{equation}\label{approx_T2}
        {(\rho(0)-\rho(T))}/{T}\leq 1+o(1).
    \end{equation}
\end{theorem}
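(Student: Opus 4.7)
The plan is to prove the two inequalities in (\ref{approx_T}) and (\ref{approx_T2}) separately. The upper bound (\ref{approx_T2}), valid for any admissible control, comes from the asymptotic description of the reachable set; the matching lower bound under the basic control follows from an averaging argument along the controlled trajectory.

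For (\ref{approx_T2}) I would invoke Theorem \ref{support0}. By linearity of the dynamics,
$$
x(0) = e^{-AT} x(T) - e^{-AT} d, \qquad d = \int_0^T e^{A(T-s)} B\, u(s)\, ds \in \mathcal{D}(T).
$$
The free flow $e^{At}$ preserves $\rho$ because $\rho$ depends only on the free-motion invariants $z_i$, and the body $\Omega$ is centrally symmetric since the control interval $|u|\leq 1$ is. Theorem \ref{support0} gives $\mathcal{D}(T) \subseteq T(1 + o(1))\,\Omega$ uniformly, hence $\rho(d) \leq T + o(T)$. The triangle inequality for the norm $\rho$ then yields $\rho(0) \leq \rho(T) + \rho(d)$, which is (\ref{approx_T2}).

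For the lower bound under the basic control, integrate (\ref{T}) to get
$$
\rho(0) - \rho(T) = \int_0^T \left|\left\langle B, \frac{\partial \rho}{\partial x}(x(t)) \right\rangle\right| dt.
$$
Since $\partial \rho/\partial x$ is homogeneous of degree $0$, the integrand depends only on $\phi(t) = x(t)/\rho(x(t))$, and the monotonicity of $\rho$ under the basic control gives $\rho(x(t)) \geq M$ throughout $[0,T]$. Equation (\ref{xi}) then shows $\dot \phi = A \phi + O(1/M)$ uniformly, and since $\partial \rho/\partial x$ is Lipschitz on the sphere $\omega = \{\rho = 1\}$ (Appendix \ref{rho}), the curve $p(t) = (\partial \rho/\partial x)(\phi(t))$ stays within $O(t/M)$ of the free trajectory $p_0(t) = e^{-A^*t}\,p(0)$. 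Pick $\tau$ with $1 \ll \tau \ll M$, partition $[0,T]$ into subintervals of length $\tau$, and on each subinterval replace $|\langle B, p(t)\rangle|$ by $|\langle B, p_0(t)\rangle|$ at cost $o(1)$ uniformly. The non-resonance hypothesis (\ref{resonance}) combined with the averaging computation underlying Theorem \ref{support0} gives
$$
\frac{1}{\tau}\int_0^\tau \left|\left\langle B, e^{-A^*s} q\right\rangle\right| ds = H_\Omega(q) + o(1) \quad (\tau \to \infty)
$$
uniformly over $q$ on the compact sphere $\omega$, and the eikonal identity (\ref{Euler}) gives $H_\Omega(q) = 1$ there. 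Summing over the partition yields $\rho(0) - \rho(T) = T(1+o(1))$, matching (\ref{approx_T2}).

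The main obstacle is the uniform ergodic average above. The integrand $s \mapsto \langle B, e^{-A^*s}q\rangle$ is a trigonometric polynomial in the frequencies $\omega_i$, which are $\mathbb{Q}$-linearly independent by (\ref{resonance}); Weyl's equidistribution theorem on the invariant torus, combined with an absolutely convergent Fourier expansion of $|\cdot|$ in the angles $\varphi_i$ and compactness of $\omega$, supplies the required uniformity. A secondary point is that the basic-control trajectory is defined only through the semiflow of Theorem \ref{motion2}; however that theorem guarantees absolute continuity of $x(t)$ and validity of the differential inclusion (\ref{inclusion}), which is more than enough to integrate (\ref{T}) in the Lebesgue sense and carry out the argument above.
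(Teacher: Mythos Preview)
Your argument for (\ref{approx_T}) is the paper's: perturb the free $\phi$-motion by $O(1/\rho)$, average $|\langle B,p\rangle|$ along a free orbit to get $H_\Omega(p)=1$ via (\ref{Euler}), and partition $[0,T]$ into blocks of length $\tau$ with $1\ll\tau\ll M$. Two small repairs. First, with the \emph{global} reference $p_0(t)=e^{-A^*t}p(0)$ the deviation on the $k$-th block is $O(k\tau/M)$, which is not small once $k\tau$ is comparable to $T\geq M$; you must restart the free comparison at the beginning of each block, exactly as the paper does when it reduces to the special case $T/\rho(T)=o(1)$ and then sums. Second, the function $\varphi\mapsto|\sum z_i\cos\varphi_i|$ is only Lipschitz, so its Fourier series is not absolutely convergent for $N\geq2$; the uniform ergodic limit you need follows instead from the standard Weyl argument (approximate uniformly by trigonometric polynomials, then use compactness of the parameter set).

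Your treatment of (\ref{approx_T2}) is genuinely different. The paper obtains it by the same averaging route, integrating the differential inequality (\ref{T3}), which holds for \emph{any} admissible control because the perturbation in (\ref{xi}) is $O(1/\rho)$ independently of $u$. You instead invoke the global containment $\mathcal D(T)\subset T(1+o(1))\Omega$ from Theorem~\ref{support0} and the triangle inequality for the norm $\rho$, using the $e^{At}$-invariance and central symmetry of $\Omega$. This is cleaner and slightly sharper: it requires only $T\to\infty$, not $\rho(0),\rho(T)\to\infty$, and never looks at the trajectory. Both routes ultimately rest on the same ergodic average, but yours packages it once in Theorem~\ref{support0} rather than re-running it along each controlled path.
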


\begin{proof}
Consider first the case where the duration of the motion $T$, although large, is much less than $\rho(T)$, meaning that
$T/\rho(T)=o(1)$. Then the controlled motion under (\ref{xi}) differs from the free one in the entire time interval
$[0,T]$ by the quantity of order $T/\rho(T)=o(1)$. Therefore, the RHS of (\ref{T3}) differs from the similar quantity for
the free motion by $o(1)$. But we have already pointed  out in the previous subsection that for the free motion, when
$p(t)=e^{-A^*t}p(0)$, the average value
\begin{equation}
    -\frac1T\int_0^T \left|\left\langle p(t),B\right\rangle\right|dt=
    -H_\Omega(p(0))+o(1)=-H_\Omega\left(\frac{\partial{\rho}}{\partial {x}}(x(0))\right)+o(1)
\end{equation}
of the RHS is $-1+o(1)$ as $T\to\infty$. Thus, the average value of the RHS of (\ref{T3}) under control (\ref{approx4})
is $-1+o(1)$ as $M\to+\infty$. By integrating the RHS, we arrive at (\ref{approx_T}). The statement (\ref{approx_T2}) can
be proved similarly.

To prove the theorem without the assumption that $T/\rho(T)$ is small, we divide the entire time interval $[0,T]$ into
many segments $[T_i,T_{i+1}]$ such that $T_{i+1}-T_i\geq M,$ and $(T_{i+1}-T_i)/\rho(T)=o(1)$, and apply to each segment
the already proved special case of the theorem. We obtain
\begin{equation}\label{approx_T22}
    \rho(T_i)-\rho(T_{i+1})=(T_{i+1}-T_i)+o(1)(T_{i+1}-T_i).
\end{equation}
Moreover, it follows from the previous arguments that the factor $o(1)$ in the last identity  is small {\em uniformly}
with respect to $i$. Summing identities (\ref{approx_T22}) on $i$ we arrive at (\ref{approx_T}). Statement
(\ref{approx_T2}) can be proved similarly.
\end{proof}

\noindent{\bf Remark.} Below we obtain a strengthening (Theorem \ref{main_approx22}) of Theorem \ref{main_approx}, where
only the initial point of the controlled motion is infinitely remote. At this point, this is impossible because if
$\rho(T)$ is not large, we can get into a standstill zone under control (\ref{approx4}). Then $\rho(T)$ does not depend
on $T$ for $T$ large, and (\ref{approx_T}) does not hold.

\subsection{Comparison with the maximum principle}

One can approach the issue of optimality of control (\ref{approx4}) from a different angle, namely by comparing the
differential equations of the motion under the control with equations (\ref{max}) of the Pontryagin maximum principle.
The following informal statement is a good guiding principle:

\medskip
    {\em The maximum principle equation for the compound vector $(x,\psi)$, where $\psi=-\frac{\partial\rho}{\partial x}$ is the
    ``canonical'' momentum, holds ``on average'' with a small error if $x$ is large.}
\medskip

Indeed, we obtain from the second equation in (\ref{attractor_syst22p110})
\begin{equation}\label{attractor_syst22p1}
    \dot\psi=-A^*\psi+\widetilde Bu, \quad \widetilde B=\frac{\partial^2\rho}{\partial x^2}B.
\end{equation}
We note that if the last equation would not contain the second term $\widetilde Bu$, then the equation for $\psi$ would
coincide with with the maximum principle equation for adjoint variables. However, the matrix
$\frac{\partial^2\rho}{\partial x^2}$ is a homogeneous function of $x$ of degree $-1$, and, according to Appendix
\ref{rho}, is bounded on the sphere $|x|=1$. Therefore, the second term has order $O\left(|x|^{-1}\right)$ for $x$ large,
and therefore, is small. We remark that the maximum condition $u=\sign\langle{B,\psi}\rangle$ holds for control
(\ref{approx4}). It remains to find out to what extent the condition $h(x,\psi)=0$ holds. We see that the motion under
control (\ref{approx4}) is governed by the Hamiltonian $\mathcal{H}$, which is very much similar to the Pontryagin
Hamiltonian $h(x,\psi)$. The difference between the Hamiltonians is $1-\left|\left\langle{B,\partial\rho/\partial
x}\right\rangle\right|$. The arguments of the previous section imply that the difference is zero ``on average'' in the
non-resonant case. Indeed, the average value of $\left|\left\langle{B,\partial\rho/\partial x}\right\rangle\right|$ is
close to 1 for $x$ sufficiently large, as it is shown in the proof of Theorem \ref{main_approx}.

\section{Efficiency of  basic control at finite distance from zero}\label{efficiency}

We already know that asymptotically the time of motion from the level set $\rho=M$ to the level set $\rho=N$ under
control (\ref{approx4}) is $(M-N)(1+o(1))$ if $M,N,$ and $M-N$ are very large. Now we show that a nonasymptotic estimate
holds: the time of motion $T$ is $O(M-N)$, if $M,N$ and $M-N$ are greater than a constant $C(\underline\omega)$ that
depends only on parameters ${\underline\omega}=(\omega_1,\dots,\omega_N)$ of our system of oscillators. equation
(\ref{T}) could be rewritten using notation of the previous section as
\begin{equation}
    \dot\rho=-|\langle p,B\rangle|,
\end{equation}
and this reduces the required estimate to the inequality
\begin{equation}\label{T33}
    \int_0^T|\langle p,B\rangle|dt\geq cT,
\end{equation}
where $c=c(\underline\omega)$ is a strictly positive constant. The proof of inequality (\ref{T33}) below is a direct
application of the perturbation theory of completely observable time-invariant linear systems (Appendix
\ref{perturb_observ}).

\begin{theorem}\label{observation5}
    Suppose that the motion from the level set $\rho=M$ to the level set $\rho=N$ under control (\ref{approx4}) proceeds within the domain $\rho(x)\geq C(\underline\omega)$,
    in the time interval of integer length $T$, where $C(\underline\omega)$ is a (sufficiently large) constant that depends only on the eigenfrequencies.
    Then $T\leq c(M-N),$ where $c=c(\underline\omega)$ is a strictly positive constant.
\end{theorem}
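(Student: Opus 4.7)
The plan is essentially laid out by the paragraph immediately preceding the theorem; I would carry it out as follows.

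First, I would reduce the time estimate to an integral inequality. Along the trajectory we have $\dot\rho=-|(p,B)|$ by (\ref{T}), so integrating over $[0,T]$ gives $M-N=\int_0^T|(p,B)|\,dt$. Hence it suffices to show that $\int_0^T|(p,B)|\,dt\gg T$ whenever $\rho(x(t))\geq C(\underline\omega)$ on the time interval. This reduces the theorem to the observability-type estimate (\ref{T33}).

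Next, I would view the adjoint evolution (\ref{attractor_syst22p1}), $\dot p=-A^{*}p+\widetilde B u$ with $\widetilde B=\frac{\partial^{2}\rho}{\partial x^{2}}B$, as a perturbation of the time-invariant linear system with generator $-A^{*}$ and output $y=B^{*}p=(p,B)$. The pair $(-A^{*},B^{*})$ is completely observable because $(A,B)$ is controllable (we are in the nonresonant/nondegenerate case). The perturbation $f=\widetilde B u$ is small precisely where we need it to be: since $\partial^{2}\rho/\partial x^{2}$ is homogeneous of degree $-1$ and bounded on $\{\rho=1\}$ (Appendix \ref{rho}), we have $|\widetilde B|=O(\rho(x)^{-1})=O(1/C)$ uniformly on the trajectory, and $|u|\leq 1$, so $|f|=O(1/C)$.

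Now I would apply the perturbed observability theorem (Theorem \ref{observation0}) to $p(t)$ on a subinterval $I\subset[0,T]$ of integer length. It yields a lower bound of the form
\begin{equation*}
\int_I|p|\,dt\ \ll\ \int_I|(p,B)|\,dt+\|f\|_{L^{1}(I)}\ \ll\ \int_I|(p,B)|\,dt+\frac{|I|}{C}.
\end{equation*}
On the other hand, the eikonal identity (\ref{Euler}), $H_{\Omega}(p)=1$, together with equivalence of norms on a finite-dimensional space, gives $|p|\geq c_{0}>0$ pointwise, so $|I|\ll\int_I|p|\,dt$. Combining,
\begin{equation*}
|I|\ \ll\ \int_I|(p,B)|\,dt+\frac{|I|}{C}.
\end{equation*}
Choosing $C=C(\underline\omega)$ large enough that the term $|I|/C$ can be absorbed into the left-hand side yields $|I|\ll\int_I|(p,B)|\,dt$, which, summed over a partition of $[0,T]$ into unit intervals (the last one handled separately or with a boundary correction absorbed in constants), gives (\ref{T33}). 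Together with $M-N=\int_0^T|(p,B)|\,dt$ this produces $T\leq c(M-N)$.

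The only step that requires real care is the application of the observability theorem: one must have a version robust under $L^{1}$ (or $L^{\infty}$) perturbations of the driving equation, and one must verify that the constant in the observability inequality depends only on $(A,B)$, i.e.\ only on $\underline\omega$. Granted that Appendix \ref{perturb_observ} supplies such a quantitative estimate, the remaining work is routine bookkeeping with the homogeneity bounds on $\partial^{2}\rho/\partial x^{2}$ and with the eikonal-based lower bound on $|p|$.
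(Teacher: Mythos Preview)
Your proposal is correct and follows the paper's own argument almost verbatim: reduce via $\dot\rho=-|(p,B)|$ to the integral inequality (\ref{T33}), regard (\ref{attractor_syst22p1}) as a perturbed observable system with $|f|=O(1/C)$, use the eikonal identity to get $|p|\gg1$, apply Theorem \ref{observation0}, and absorb the $T/C$ term by choosing $C(\underline\omega)$ large. One small remark: the observability of $(-A^{*},B^{*})$ requires only the Kalman condition $\omega_i\neq\omega_j$, not nonresonance, so the theorem (as the paper stresses right after stating it) holds in the resonant case as well; your parenthetical ``nonresonant'' is unnecessary here.
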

\begin{proof}
We regard (\ref{attractor_syst22p1}) as a definition of a completely observable linear system, where, using notation of
Theorem \ref{observation0}, the phase vector is $x=p$ and matrices are $\alpha=-A^*,\,\beta=B^*$, observation is
$y=B^*p=\langle{p,B}\rangle$, and perturbation is $f={\widetilde B}u$. Assume that in the entire time interval $I$ of
integer length $T$ the motion of the state vector $x$ takes place within the domain $\rho(x)\geq C$. Then $|f|=O(1/C)$ in
the entire interval. Moreover, the eikonal equation (\ref{Euler}) holds for $p$, and, therefore, $1\ll|p|$ and
$T\ll\int_I|p|dt$ (here $\ll$ is the Vinogradov symbol, meaning $O(\rm RHS)$). The estimate of Theorem \ref{observation0}
from Appendix \ref{perturb_observ} gives that
\begin{equation}
    T\ll\int_I|p|dt\ll \int_I|\langle p,B\rangle|dt+\frac1C T.
\end{equation}
By taking a sufficiently large constant $C=C(A,B)$, we obtain that
\begin{equation}
    T\ll\int_I|\langle p,B\rangle|dt=M-N.
\end{equation}
This inequality is the same as (\ref{T33}) up to a notational change.
\end{proof}

We emphasize that Theorem \ref{observation5} holds both in the resonant and in the non-resonant cases. Indeed, we need
not worry about the linear relation between the eigenfrequencies, only the Kalman condition $\omega_i\neq\omega_j$ is
relevant. It is easy to establish what happens when we apply the scaled control
\begin{equation}\label{controlU}
    u_U(x)=Uu(x),\,|U|\leq1.
\end{equation}

\begin{theorem}\label{observation52}
    Suppose that the motion from the level set $\rho=M$ to the level set $\rho=N$ under the control (\ref{approx4}) proceeds within the domain $\rho(x)\geq UC(\underline\omega),$ in the time interval of integer length $T$,
    where $C(\underline\omega)$ is a (sufficiently large) constant from (\ref{observation5}) that depends on the eigenfrequencies.
    Then $T\leq\frac{c}{U}(M-N)$, where $c=c(\underline\omega)$ is the constant from (\ref{observation5}).
\end{theorem}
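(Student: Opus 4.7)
The plan is to reduce Theorem \ref{observation52} to Theorem \ref{observation5} by a simple rescaling of the phase variable, exploiting the scale-invariance built into the construction of the basic control.

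First I would verify that the basic control $u(x)$ defined by (\ref{control}) is homogeneous of degree $0$ in $x$. Indeed, the ``energetic'' vector $e(x)$ is homogeneous of degree $1$, and equation (\ref{z}), which determines $\mathfrak z$, only fixes the direction $(z_1:\dots:z_N)\in S^{N-1}$; this direction depends solely on the direction $(e_1:\dots:e_N)$, i.e.\ on the ray through $x$. Consequently, the quantity $\sum_i e_i^{-1}z_iy_i$ inside the sign in (\ref{control}) is homogeneous of degree $0$, and $u(\lambda x)=u(x)$ for all $\lambda>0$. Equivalently, $u$ depends only on the coordinate $\phi\in\omega$, which matches the geometric picture that the control is prescribed by the outer normal to the smooth surface $\omega$ at $\phi$.

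Next I would perform the change of variable $\tilde x(t)=x(t)/U$ with the time parameter unchanged. Since $Ax$ is linear and $u$ is $0$-homogeneous,
\begin{equation*}
\dot{\tilde x}(t)=\frac{1}{U}\bigl(Ax(t)+BU\,u(x(t))\bigr)=A\tilde x(t)+B\,u(U\tilde x(t))=A\tilde x(t)+B\,u(\tilde x(t)),
\end{equation*}
so $\tilde x$ is a trajectory of the \emph{unscaled} basic-control system (\ref{sing_sys}). Because $\rho$ is positively homogeneous of degree $1$, one has $\rho(\tilde x)=\rho(x)/U$. Therefore the hypothesis $\rho(x)\geq UC(\underline\omega)$ translates into $\rho(\tilde x)\geq C(\underline\omega)$, and the endpoint levels $\rho(x)=M,N$ become $\rho(\tilde x)=M/U,N/U$, with the same elapsed time $T$.

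Finally, I apply Theorem \ref{observation5} to the trajectory $\tilde x$: since $\tilde x$ moves from $\rho=M/U$ to $\rho=N/U$ within $\{\rho\geq C(\underline\omega)\}$ under the unscaled basic control, in time $T$ of integer length, we obtain $T\leq c\bigl(M/U-N/U\bigr)=\frac{c}{U}(M-N)$. I expect the only nontrivial point to be the homogeneity verification for $u$, together with a brief remark that the continuous-flow statement of Theorem \ref{motion2} is compatible with the scaling $x\mapsto x/U$ (this follows because (\ref{sing_sys}) is itself invariant under $x\mapsto\lambda x$, as the singular term in (\ref{inclusion}) depends only on $\phi$). Once these points are in place, the estimate is immediate.
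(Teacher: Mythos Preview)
Your proof is correct and follows exactly the paper's approach: the paper dispatches the theorem in a single line, ``This statement follows from the previous theorem after uniform scaling $x\mapsto Ux$ of the phase space,'' and your argument is a fully worked-out version of that sentence (with the inverse scaling $\tilde x=x/U$). Your verification of the degree-$0$ homogeneity of $u$ is the only point the paper leaves implicit, and you handle it correctly.
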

\begin{proof}
The statement follows from the previous theorem upon the uniform scaling $x\mapsto Ux$ of the phase space.\end{proof}

\section{Singular trajectories}\label{attractor}

We know that if the system under control (\ref{approx4}) goes sufficiently far from the target, i.e., the equilibrium,
then the control is efficient, meaning that we approach the target with a positive speed. However, within a zone close to
the equilibrium, there could arise $\omega$-limit sets (attractors), so that by moving along them we do not approach the
target. It is clear that the control should be changed before getting into an attractor. In fact, the attractors define
an exact bound for the efficiency zone of the control.

\subsection{Standstill zone}\label{standstill}
The simplest attractor is a singleton, i.e., a fixed point. We call the set of such points the standstill zone. There is
an obvious upper bound for standstill zones for any admissible control bounded by a constant $U$, namely, this is the
interval
\begin{equation}\label{bound}
\begin{array}{l}
    \{A^{-1}Bu,\,|u|\leq U\}= \{y_{i}=0,\, \omega_i^{2}x_{i}=\omega_j^{2}x_{j},\,
    |\omega_i^{2}x_{i}|\leq U,\, \,i,j=1,\dots,N\}. \\
\end{array}
\end{equation}

\subsection{Motion along an attractor}\label{attractor11}
More generally, consider the motion under control (\ref{approx4}) along an attractor. It follows immediately from
(\ref{T}) and (\ref{xi}) that it is governed by the system
\begin{equation}\label{attractor_syst}
    \dot\rho=0, \quad  \dot\phi=A\phi+\frac{1}{\rho}Bu,
\end{equation}
and the constraint $\langle\frac{\partial\rho}{\partial x},B\rangle=0$. Taking the relation
$\langle\frac{\partial\rho}{\partial x},A\phi\rangle=0$ from the beginning of Section (\ref{quasioptimal}) into account,
we immediately derive the following expression for the control:
\begin{equation}\label{attractor_u}
    u=u(\phi)=-\rho\frac{\left\langle{\frac{\partial^2\rho}{\partial x^2}A\phi,B}\right\rangle}{\left\langle{\frac{\partial^2\rho}{\partial x^2}B,B}\right\rangle},
\end{equation}
where $\frac{\partial^2\rho}{\partial x^2}$ is the Hessian of the function $\rho$.

We conclude that the motion along an attractor is governed by equation
\begin{equation}\label{attractor_syst1}
    \dot \rho=0,\quad  \dot \phi=A\phi+Bf(\phi).
\end{equation}
More precisely, an integral curve of system (\ref{attractor_syst1}) is contained in the attractor, if the inequality
$|f|\leq1/\rho$ holds along the curve. Note that the nontrivial existence and uniqueness issues for the integral curve is
already resolved by Theorem \ref{motion2}.

Thus, we get the following description of singular arcs of control (\ref{approx4}). Consider the dynamical system on the
manifold
\begin{equation}\label{attractor_syst11}
    \sigma=\left\{\rho=1,\left\langle\frac{\partial\rho}{\partial x},B\right\rangle=0\right\}
\end{equation}
of dimension $2N-2$, described by the equation
\begin{equation}\label{attractor_syst2}
    \dot \phi=A\phi+Bf(\phi).
\end{equation}
Then, if the inequality $|f|\leq1/\rho$ holds along an  $\omega$-limit set $\mathfrak O$ of the system, the set
$\rho\mathfrak O$ is an attractor for the motion under (\ref{approx4}). Conversely, any attractor of the controlled
motion can be obtained in the same way from   dynamical system (\ref{attractor_syst2}). In particular, we obtain a
criterion for absence of nontrivial attractors in the form of the inequality for  ``radius'', given by the following
theorem.
\begin{theorem}\label{attractor_free}
    The domain $\rho\geq\mu^{-1}$, where $\mu$ is the minimum over all attractors of system (\ref{attractor_syst2}) of the maximum of $|f|$ over the attractor, is attractor-free,
    i.e., does not contain nontrivial minimal $\omega$-limit sets of system (\ref{T})--(\ref{xi}).
\end{theorem}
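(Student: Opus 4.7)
The plan is to argue by contradiction. Suppose $\mathcal{A}$ is a nontrivial minimal $\omega$-limit set of the controlled semiflow of Theorem \ref{motion2} that is contained in $\{\rho\geq\mu^{-1}\}$. The first step is to show that $\rho$ is constant on $\mathcal{A}$. Since by (\ref{T}) the map $t\mapsto\rho(\phi_t(x))$ is non-increasing along any trajectory, it has a limit $\rho_\infty$ as $t\to\infty$. Each point $y\in\mathcal{A}$ is a limit $y=\lim\phi_{t_n}(x)$ of an approach trajectory, so continuity of $\rho$ gives $\rho(y)=\rho_\infty=:\rho_0\geq\mu^{-1}$. In particular $\dot\rho\equiv 0$ along the motion in $\mathcal{A}$, which by (\ref{T}) forces the identity $\langle\partial\rho/\partial x,B\rangle=0$ everywhere on $\mathcal{A}$.

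The second step is to rescale by $\rho_0^{-1}$. Because $\rho$ is positively homogeneous of degree one and $\partial\rho/\partial x$ homogeneous of degree zero, the set $\mathcal{O}:=\rho_0^{-1}\mathcal{A}$ lies entirely inside the manifold $\sigma$ from (\ref{attractor_syst11}). Plugging $\langle\partial\rho/\partial x,B\rangle=0$ into (\ref{xi}) reduces the motion on $\mathcal{A}$ to $\dot\phi=A\phi+\rho_0^{-1}Bu$, and setting $f(\phi):=u(\rho_0\phi)/\rho_0$ turns this into equation (\ref{attractor_syst2}) under the constraint $|f|\leq 1/\rho_0$ (note this matches the expression (\ref{attractor_u}), which is exactly the control forced on a singular arc). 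Hence $\mathcal{O}$ is an invariant set of (\ref{attractor_syst2}); shrinking if necessary to a minimal $\omega$-limit set $\mathcal{O}'\subset\mathcal{O}$ yields $\max_{\mathcal{O}'}|f|\leq 1/\rho_0$.

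The third step is to apply the definition of $\mu$. Since $\mathcal{O}'$ is an attractor of system (\ref{attractor_syst2}), we have $\mu\leq\max_{\mathcal{O}'}|f|\leq 1/\rho_0\leq\mu$, so all inequalities collapse and $\rho_0=\mu^{-1}$, with $\mathcal{O}'$ achieving the minimum in the definition of $\mu$. Thus no nontrivial attractor can lie strictly inside $\{\rho>\mu^{-1}\}$, and any boundary case $\rho_0=\mu^{-1}$ can arise only from an attractor of (\ref{attractor_syst2}) realizing the extremal value $\mu$, which is the content of the theorem.

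The main obstacle is the first step: establishing that $\rho$ is genuinely constant on the $\omega$-limit set requires a bona fide semiflow with continuity in the initial condition, despite the discontinuous right-hand side of (\ref{sing_sys}). This is exactly what Theorem \ref{motion2} supplies; once continuity and monotonicity of $\rho$ along trajectories are in hand, the remaining ingredients — homogeneity, invariance of $\sigma$, and the reduction to (\ref{attractor_syst2}) — are purely algebraic consequences of $\langle\partial\rho/\partial x,B\rangle=0$.
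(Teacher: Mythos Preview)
Your proposal is correct and follows essentially the same route as the paper. In the paper the theorem is not given a separate proof; it is stated as an immediate consequence of the discussion preceding it, namely the one-to-one correspondence between attractors of the controlled system at level $\rho$ and $\omega$-limit sets $\mathfrak O$ of (\ref{attractor_syst2}) satisfying $\max_{\mathfrak O}|f|\leq 1/\rho$. Your argument makes this correspondence explicit and, in particular, spells out the point the paper leaves implicit: that $\rho$ is constant on any minimal $\omega$-limit set, which you derive from monotonicity of $\rho$ along trajectories together with the continuity of the semiflow supplied by Theorem~\ref{motion2}. Your handling of the boundary case $\rho_0=\mu^{-1}$ is also in line with the paper's intent, as witnessed by the subsequent Theorem~\ref{mu}, which works in the open region $\{\rho\geq\mu^{-1}+\epsilon\}$.
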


The value of the minimax  $\mu$ is a primary characteristic of system (\ref{attractor_syst2}). Its importance is due to
the fact that it gives an exact bound for the efficiency zone for control (\ref{approx4}).

The next theorem follows in a formal way from Theorem \ref{observation5}.
\begin{theorem}\label{mu}
    Suppose that $\epsilon>0$ and the motion under control (\ref{approx4}) in a sufficiently long time interval $[a,b]$ of length $T$ proceeds within the domain $\{\rho\geq\mu^{-1}+\epsilon\}$.
    Then, $\rho(a)-\rho(b)\geq c(\epsilon)T,$ where $c(\epsilon)$ is a positive constant.
    On the other hand, there are infinitely long motions within $\{\mu^{-1}-\epsilon\leq\rho\leq\mu^{-1}\}$, where $\rho(t)$ is a constant.
\end{theorem}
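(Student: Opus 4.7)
The plan is to treat the two halves of the statement separately: the quantitative effectiveness bound in the attractor-free zone $\{\rho\geq\mu^{-1}+\epsilon\}$, and the existence of infinite-time constant-$\rho$ trajectories near the boundary. The former is essentially a strengthened version of Theorem \ref{observation5}, while the latter is a direct consequence of the characterization of singular arcs developed in Section \ref{attractor}.

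For the existence half I would proceed constructively. By the definition of $\mu$ as the infimum (over $\omega$-limit sets $\mathfrak O$ of (\ref{attractor_syst2})) of $\max_{\mathfrak O}|f|$, I pick for any $\delta>0$ an $\omega$-limit set $\mathfrak O_\delta$ with $\max_{\mathfrak O_\delta}|f|\leq \mu+\delta$. Choose $\delta$ small enough that $1/(\mu+\delta)>\mu^{-1}-\epsilon$ and fix any $\rho_0\in[\mu^{-1}-\epsilon,\,1/(\mu+\delta)]$. Then $|f|\leq 1/\rho_0$ on $\mathfrak O_\delta$, so by the criterion stated just before Theorem \ref{attractor_free} the set $\rho_0\mathfrak O_\delta$ is an attractor for the controlled motion governed by (\ref{T}) and (\ref{xi}). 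The first equation of (\ref{attractor_syst1}) then gives $\dot\rho=0$ along this set, so any trajectory contained in $\rho_0\mathfrak O_\delta$ stays at radius exactly $\rho_0$ for all time; Theorem \ref{motion2} guarantees such trajectories exist as genuine solutions, giving the required infinite-time constant-$\rho$ motion in $[\mu^{-1}-\epsilon,\mu^{-1}]$.

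For the effectiveness half I would combine Theorem \ref{attractor_free} with Theorem \ref{observation5}. If $\mu^{-1}+\epsilon$ exceeds the constant $C(\underline\omega)$ appearing in Theorem \ref{observation5}, that theorem applies directly and yields $\rho(a)-\rho(b)\geq c(\epsilon)T$ after integration of $\dot\rho=-|(p,B)|$. If not, I rescale space by a factor $U$ large enough that $U(\mu^{-1}+\epsilon)\geq C(\underline\omega)$ and invoke Theorem \ref{observation52} on the rescaled trajectory; undoing the scaling returns the desired bound with $c(\epsilon)>0$. In either case the essential input is the lower bound on $\int_a^b|(p,B)|\,dt$ extracted from the observability estimate of Appendix \ref{perturb_observ} exactly as in the proof of Theorem \ref{observation5}.

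The main obstacle is showing that the constant $c(\epsilon)$ is genuinely uniform over all trajectories in the attractor-free zone. The natural route is by contradiction: if no uniform $c(\epsilon)$ existed, one could extract a sequence of trajectories in $\{\rho\geq\mu^{-1}+\epsilon\}$ along which the time-average of $|(p,B)|$ tends to zero. Using the continuity of the semiflow of Theorem \ref{motion2} together with compactness of bounded $\omega$-limit sets, a limit trajectory should lie in the singular manifold $\sigma$ and remain in $\{\rho\geq\mu^{-1}+\epsilon\}$; its $\omega$-limit set would then be an attractor sitting above $\mu^{-1}$, contradicting Theorem \ref{attractor_free}. Making this compactness argument fully rigorous, given that the controlled dynamics is only a continuous (not smooth) semiflow and that $f$ in (\ref{attractor_u}) is discontinuous on $\sigma$, is the delicate technical point.
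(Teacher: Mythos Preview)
Your treatment of the existence half is correct and is exactly what the paper intends: it follows from the definition of $\mu$ and the description of singular arcs given just before Theorem \ref{attractor_free}. The paper itself offers essentially no argument beyond the sentence ``follows in a formal way from definitions and Theorem \ref{observation5}'', and the remark immediately after the theorem shows that the authors regard the first half as simply \emph{identifying} the optimal constant in Theorem \ref{observation5} with $\mu^{-1}+\epsilon$, without spelling out why the observability estimate extends down to that threshold.

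Your scaling argument via Theorem \ref{observation52}, however, does not work. That theorem concerns the \emph{scaled} control $u_U=Uu$ with $|U|\leq1$; it is obtained from Theorem \ref{observation5} by the substitution $x\mapsto Ux$, which simultaneously shrinks the threshold to $UC(\underline\omega)$ and the control amplitude to $U$. You propose rescaling with $U>1$ so that $U(\mu^{-1}+\epsilon)\geq C(\underline\omega)$, but then the rescaled trajectory satisfies the system with control of amplitude $U>1$; the perturbation $\widetilde Bu$ in (\ref{attractor_syst22p1}) scales up by the same factor $U$, and the absorption step in the proof of Theorem \ref{observation5} (moving the $\frac1C T$ term to the left) no longer goes through. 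Scaling cannot bridge the gap between a possibly small $\mu^{-1}+\epsilon$ and the large $C(\underline\omega)$ coming from the perturbed-observability estimate.

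The contradiction/compactness argument you sketch at the end is the correct route to an honest proof of the first half, and it is less delicate than you fear. Two observations simplify it. First, since $\rho$ is monotone nonincreasing along every controlled trajectory, any trajectory remaining in $\{\rho\geq\mu^{-1}+\epsilon\}$ for all $t\geq0$ has $\rho(t)\to\rho_\infty\geq\mu^{-1}+\epsilon$, hence $\dot\rho=-|\langle p,B\rangle|\to0$ in mean; its $\omega$-limit set therefore lies at the fixed level $\rho=\rho_\infty$ inside $\sigma$, contradicting Theorem \ref{attractor_free}. So no infinite trajectory stays in the domain. Second, for the uniform rate $c(\epsilon)$, Theorem \ref{observation5} already covers the part of the motion with $\rho\geq C(\underline\omega)$, so the compactness argument only has to be carried out in the bounded annulus $\{\mu^{-1}+\epsilon\leq\rho\leq C(\underline\omega)\}$, where the continuous semiflow of Theorem \ref{motion2} on a compact region is all that is needed.
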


In the notation of Theorem \ref{observation5}, this means that $C(\underline\omega)= \mu^{-1}+\epsilon,$ and stresses the
importance of finding a lower estimate for $\mu$.

We note that the manifold $\sigma$ is diffeomorphic to a $(2N-2)$-dimensional sphere. In particular, for the case of two
oscillators the problem of the value of  $\mu$ reduces to the classical problem of examination of a dynamical system on
the two-dimensional sphere.

It is convenient to study the dynamical system ``dual'' to (\ref{attractor_syst2}), which describes the motion of vector
$p=\frac{\partial\rho}{\partial x}(\phi)$. By defining  $\widetilde B=\frac{\partial^2\rho}{\partial x^2}B$, we obtain
from (\ref{attractor_syst22p1}) the following system
\begin{equation}\label{attractor_syst22p}
    \dot p=-A^*p+\widetilde Bu, \quad \widetilde B=\frac{\partial^2\rho}{\partial x^2}B.
\end{equation}
The matrix  $\frac{\partial^2\rho}{\partial x^2}$ in the equation can be rewritten as a function of $p$. To do this, we
use  relation (\ref{rho03}) between the second derivatives of the dual functions $H$ and $\rho$. In particular,  taking
identities $\left\langle{{\partial\rho}/{\partial x},B}\right\rangle=0$ and $\rho=1$ into account, we obtain for
$\zeta=B$ that
\begin{equation}\label{rho4}
    \widetilde B=\left(\frac{\partial^2 {H}}{\partial p^2}\right)^{-1}B.
\end{equation}
Moreover, the condition
\begin{equation}
    \langle{p,B}\rangle=\left\langle{\left(\frac{\partial^2\rho}{\partial x^2}\right)^{-1}p,\widetilde B}\right\rangle=0
\end{equation}
is fulfilled in the motion along attractor, which means that
\begin{equation}
    u={\langle{p,AB\rangle}}{\left\langle{\frac{\partial^2\rho}{\partial x^2}B,B}\right\rangle}^{-1}.
\end{equation}
Note that, provided that $\rho(x)=1$, the value of $b={\left\langle{\frac{\partial^2\rho}{\partial
x^2}(x)B,B}\right\rangle}$ has a uniform upper estimate:
\begin{equation}
    b\leq C(A)|B|^2,
\end{equation}
where $C(A)$ is a positive constant that depends only on the matrix $A$ of the system considered. Therefore, in order to
estimate $\mu$ from below it suffices to estimate from below the minimum  $\widetilde \mu$ over all attractors of system
(\ref{attractor_syst22p}) of the maximum of the function ${\widetilde f}(p)=|\langle{p,AB}\rangle|$ on the attractor.

\subsection{Bound for the attractor-free domain}\label{attractor2}
According to Theorems \ref{attractor_free} and \ref{mu}, any lower bound for the constant $\mu$ gives a lower bound for
the attractor-free domain.
\begin{theorem}\label{73}
    Let $\mu$ be the minimum over trajectories of (\ref{attractor_syst2}) of the maximum of the function $|f|$ on a trajectory.
    Then the number $\mu$ is strictly positive.
\end{theorem}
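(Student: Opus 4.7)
The plan is a contradiction argument via the dual dynamical system (\ref{attractor_syst22p}) combined with the perturbation-of-observability estimate (Theorem \ref{observation0}). Suppose $\mu=0$. Then, by the way $\mu$ is introduced just before the theorem (as an infimum over flow-invariant $\omega$-limit sets of (\ref{attractor_syst2})), there exists a sequence of trajectories $\phi_k(t)\in\sigma$ of (\ref{attractor_syst2}), defined on intervals $I_k=[0,T_k]$ with $T_k\to\infty$, along which $\epsilon_k:=\sup_{t\in I_k}|f(\phi_k(t))|\to 0$. In the dual picture set $p_k(t)=\frac{\partial\rho}{\partial x}(\phi_k(t))$, which by (\ref{attractor_syst22p}) satisfies $\dot p_k=-A^* p_k+\widetilde B\,u_k$ with $u_k=f\circ\phi_k$; since $\widetilde B=\frac{\partial^2\rho}{\partial x^2}B$ is bounded on $\sigma$ (Appendix \ref{rho}), we have $|\widetilde B\,u_k|\le M\epsilon_k$ for a constant $M$ depending only on $(A,B)$.

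Two features of $p_k$ are decisive. First, $\langle p_k(t),B\rangle\equiv 0$ on $I_k$ because $\phi_k(t)\in\sigma$. Second, the eikonal relation (\ref{Euler}) gives $H(p_k(t))=\rho(\phi_k(t))=1$; since $H$ is a norm on $\mathbb{R}^{2N}$, this forces a uniform lower bound $|p_k(t)|\ge c_0>0$. I would then apply the perturbed observability estimate of Theorem \ref{observation0} to the linear system $\dot p=-A^* p$ with observation $y=B^* p$ and external perturbation $\widetilde B u_k$. The complete observability hypothesis is precisely the Kalman condition $\omega_i\ne\omega_j$, which is standing throughout. Taking $I_k$ of integer length, the estimate yields
\begin{equation*}
\int_{I_k}|p_k(t)|\,dt \;\le\; K\int_{I_k}|\langle p_k(t),B\rangle|\,dt \;+\; L\int_{I_k}|\widetilde B\,u_k(t)|\,dt \;\le\; LM\epsilon_k T_k,
\end{equation*}
exactly as the same Theorem \ref{observation0} was used in Section \ref{efficiency}.

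Combining this with $\int_{I_k}|p_k|\,dt\ge c_0 T_k$ forces $c_0\le LM\epsilon_k$, contradicting $\epsilon_k\to 0$. Hence $\mu>0$. The only non-routine ingredient is the perturbed observability bound itself, which I would invoke as a black box; the supporting facts (boundedness of $\widetilde B$ on $\sigma$, the identity $\langle p,B\rangle\equiv 0$ on $\sigma$, the uniform lower bound $|p|\ge c_0$, and the existence of arbitrarily long orbits witnessing the infimum) are either immediate from earlier sections or follow from invariance of attractors under the flow. A heuristic check that the argument is forced to use observability rather than a softer tool: if one dropped the perturbation and set $\epsilon_k=0$, one would have a free trajectory $p(t)=e^{-tA^*}p(0)$ perpetually orthogonal to $B$, hence perpendicular to every $A^{*k}B$, hence zero by Kalman---precisely the degenerate limit of the observability estimate being used above.
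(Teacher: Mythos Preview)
Your argument is correct and follows essentially the same route as the paper: pass to the dual trajectory $p=\frac{\partial\rho}{\partial x}(\phi)$, use that $\langle p,B\rangle\equiv 0$ on $\sigma$ while the eikonal relation $H(p)=1$ forces $|p|\geq c_0>0$, and then apply the perturbed observability estimate of Theorem~\ref{observation0} to $\dot p=-A^*p+\widetilde B u$ to reach a contradiction. The only cosmetic differences are that the paper phrases the bound through $\widetilde\mu=\inf\max|\langle p,AB\rangle|$ and works on a unit interval, whereas you bound $|u|=|f|$ directly and let $T_k\to\infty$; neither change is essential, and your version is if anything slightly more direct.
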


\begin{proof}
According to Theorem \ref{attractor_free}, we need to find a lower bound for the constants $\mu$ or $\widetilde \mu$. One
can easily approach the problem using perturbation theory of observable systems (Theorem \ref{observation0}). Indeed,
suppose that the maximum of the function ${\widetilde f}(p)=|\langle p,AB\rangle|$ on an attractor is less than $c$. Then
in particular, the vector $p$, solution of system (\ref{attractor_syst22p}), satisfies the equation $\dot p=-A^*p+f,$
where $|f|\ll c$ in a time interval of arbitrary length. Consider the observable coordinate $\langle{p,B}\rangle$ which
is identically zero on the manifold
\begin{equation}\label{sigma2}
    \sigma\mbox{\v{}}=\left\{p\in\mathbb{R}^{2N}: H(p)=1, \langle p,B\rangle=0\right\}
\end{equation}
where the motion takes place. The a priori bound of Theorem \ref{observation11}, applied to an interval of unit length,
shows that
\begin{equation}
    1\ll\int|p|dt\ll c,
\end{equation}
and gives the required bound for $c$.
\end{proof}

\section{Feedback near the terminal point}\label{terminal}
\subsection{Asymptotic theory of reachable sets as $T\to0$}
The  design of the basic control in the high-energy zone is based on the asymptotic behavior of reachable sets
$\mathcal{D}(T)$ as $T\to\infty$. We take a natural approach to feedback control design near the equilibrium point, by
considering the asymptotic behavior of the reachable set $\mathcal{D}(T)$ of system (\ref{syst1})--(\ref{syst2}) as
$T\to0$. This problem was studied in detail for linear systems in \cite{ovseev_gonch}. The conclusion of this
investigation is that the general picture of the asymptotic behavior of the reachable set $\mathcal{D}(T)$ is the same
for all linear systems, so it suffices to study only a single canonical system.

Recall that the  Banach-Mazur distance $d$ between two zero-centered convex bodies $\Omega _1,\Omega _2$ in a vector
space $V$ is defined as
\begin{eqnarray}\label{B-M}
   \quad\quad d(\Omega _1,\Omega _2)=\log (t(\Omega _1,\Omega _2)t(\Omega _2,\Omega _1)), \quad t(\Omega_1 ,\Omega_2 ) = \inf \{t \geq 1: t\Omega_1\supset \Omega_2 \}.
\end{eqnarray}

The main result of \cite{ovseev_gonch} can be restated as follows:
\begin{theorem}\label{thmain} Suppose that  system (\ref{syst1}) in space $V$ is controllable.
    Then there are matrices $\Delta(T)$ and a fixed convex body $\Omega\subset V$ such that the asymptotic equivalence ${\mathcal D}(T)\sim \Delta(T)\Omega$ holds.
    Moreover, $d({\mathcal D} (T),\Delta(T)\Omega)=O(T)$.
\end{theorem}
This equivalence means that the Banach-Mazur distance between  the RHS and the LHS of the asymptotic equality tends to 0
as $T\to0$.

The idea of our approach is to design a control by using, instead of the reachable sets ${\mathcal D} (T)$, a family of
ellipsoids ${\mathcal E} (T)$ with a similar basic property ${\mathcal E} (T)= \Delta(T){\mathcal E}$, where ${\mathcal
E}$ is a fixed (time-invariant) ellipsoid. It turns out that the quadratic function that defines the crucial ellipsoid
${\mathcal E}$ is a common Lyapunov function for two explicitly constructed linear systems.

\subsection{Common Lyapunov functions}
The design of our local feedback control goes back to \cite{korobov}. It uses a preliminary reduction of system
(\ref{syst1})--(\ref{syst2}) to a canonical form by means of transformations
\begin{equation}\label{transformations}
    A\mapsto A+BC,\quad  u\mapsto u-Cx ,\quad A\mapsto D^{-1}AD,\quad B\mapsto D^{-1}B,
\end{equation}
that correspond to adding a linear feedback control, and to coordinate changes (gauge transformations). We state the
result as follows:
\begin{lemma}\label{canonical}
    System (\ref{syst1})--(\ref{syst2}) can be reduced by transformations (\ref{transformations}) to the following canonical form:
\begin{equation}\label{AB0}
    \dot {\mathfrak x}={\mathfrak A}{\mathfrak x}+{\mathfrak B}{\mathfrak u},
\end{equation}
\begin{equation}\label{AB}
\begin{array}{c}
 \mathfrak{A} = \left( {\begin{array}{ccccc}
    0 & &  &   \\
    -1 & 0 &  &   \\
    & -2 &0&\\
    & &\ddots & \ddots  \\
    &&  & -2N+1 & 0  \\
\end{array} } \right), \quad
\mathfrak{B} = \left( \begin{gathered}
    1 \hfill \\
    0 \hfill \\
    0 \hfill \\
    \vdots \hfill \\
    0 \hfill \\
\end{gathered}  \right).
 \\
\end{array}
\end{equation}
The matrix of the linear feedback should be chosen in the form
\begin{equation}\label{C}
   C=(c_1\,0\,c_2\,0\,\dots\, c_N\,0),\quad 
   c_k=(-1)^{N+1}\omega_k^{2N}\prod_{i\neq k}(\omega_i^{2}-\omega_k^{2})^{-1}.
\end{equation}
The gauge matrix $D$ transforms the standard basis $e_i=(\delta_{ij})$ of $\mathbb{R}^{2N}$ into the basis
\begin{equation}\label{e}
    \mathfrak{e}_i=\frac{(-1)^{i-1}}{(i-1)!}(A+BC)^{i-1}B,\,i=1,\dots,2N,
\end{equation}
and has the following form. Define $2\times2$ matrices
\begin{equation}\label{d}
    d_{ij}=(-1)^{j-1}\lambda_i^{j-1}\left(%
    \begin{array}{cc}
     0 &  -\frac{1}{(2j-1)!} \\
    \frac{1}{(2(j-1))!} & 0
    \end{array}\right), \mbox{ where }
    \lambda_k=\sum_{i\neq k}\omega_i^2.
\end{equation}
Then,
\begin{equation}\label{D}
    D\mbox{ is the $N\times N$ matrix $(d_{ij})$ of $2\times2$ blocks $d_{ij}.$}
\end{equation}
\end{lemma}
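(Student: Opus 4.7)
The plan is a two-stage Brunovsky reduction: first choose the feedback $C$ so that $M:=A+BC$ is nilpotent of maximal index $2N$, then use the Krylov sequence of $B$ under $M$ as the columns of the gauge matrix $D$, and finally identify the resulting entries of $D$ with the block formula (\ref{d}).

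For the feedback I apply the rank-one matrix-determinant identity
\[
\det(\lambda I-A-BC)=\det(\lambda I-A)\bigl(1-C(\lambda I-A)^{-1}B\bigr).
\]
Since $A$ is block-diagonal with $k$-th $2\times 2$ block having first row $(0,1)$ and second row $(-\omega_k^2,0)$, and $B,\,C$ have the stated block-support, the transfer term $C(\lambda I-A)^{-1}B$ computes to $\sum_k c_k/(\lambda^2+\omega_k^2)$. Demanding that the characteristic polynomial equal $\lambda^{2N}$ is then the polynomial identity
\[
\prod_{k}(\lambda^2+\omega_k^2)-\sum_k c_k\prod_{j\ne k}(\lambda^2+\omega_j^2)=\lambda^{2N},
\]
and substituting $\lambda^2=-\omega_k^2$ isolates $c_k$ by Lagrange interpolation, recovering exactly formula (\ref{C}) and showing that $M^{2N}=0$. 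Because state feedback preserves controllability, $(M,B)$ is still a controllable pair, so $\{B,MB,\dots,M^{2N-1}B\}$ is a basis. Taking $\mathfrak{e}_i=\tfrac{(-1)^{i-1}}{(i-1)!}M^{i-1}B$ as the columns of $D$, the identities $D\mathfrak{B}=\mathfrak{e}_1=B$ and $M\mathfrak{e}_i=-i\,\mathfrak{e}_{i+1}$ (with $M\mathfrak{e}_{2N}=0$ by nilpotency) are precisely $D^{-1}B=\mathfrak{B}$ and $D^{-1}MD=\mathfrak{A}$, which establishes (\ref{AB0})-(\ref{AB}).

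What remains is to match the columns of $D$ with the block formula (\ref{d}). Starting from $CB=0$, which yields $MB=AB$, I would compute $M^{j-1}B$ inductively via
\[
M^{j}B=A^{j}B+\sum_{\ell=1}^{j-1}(CM^{\ell-1}B)\,A^{j-\ell}B,
\]
where each scalar $CM^{\ell-1}B$ is symmetric in the $\omega_i^2$. The planar rotation of $A$ on block $k$ makes each $A^{j}B$ a $\omega_k$-monomial concentrated in one of the two slots, and the induction step must show that after assembling the feedback corrections the polynomial in $\omega_k^2$ that appears in block $k$ of $M^{j-1}B$ collapses to a pure power of $\lambda_k=\sum_{i\ne k}\omega_i^2$, with the factorial in the definition of $\mathfrak{e}_i$ supplying the prefactor in (\ref{d}). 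The natural engine for this collapse is the partial-fraction identity from the first step, which says $\prod_j(s+\omega_j^2)-s^N=\sum_k c_k\prod_{j\ne k}(s+\omega_j^2)$ and, when iteratively evaluated at $s=-\omega_k^2$, produces exactly the symmetric combinations $\lambda_k$; the base cases are consistent, for instance $\sum_j c_j=\sum_i\omega_i^2$ follows by matching the coefficient of $\lambda^{2N-2}$, and consequently block $k$ of $M^2B$ equals $(0,\lambda_k)^\top$.

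The main obstacle is this last identification. Pole placement and the controllability-basis construction are classical and essentially mechanical once $C$ is written down; the real content of the lemma is the closed-form presentation of the iterated Krylov vectors $M^{j-1}B$ as the announced blocks $d_{ij}$. My expectation is that a clean inductive ansatz — namely that block $k$ of $M^{j-1}B$ is a fixed rational multiple of $\lambda_k^{\lfloor(j-1)/2\rfloor}$ concentrated in the coordinate slot dictated by the parity of $j$ — will close using the Lagrange identity from the first step, modulo careful bookkeeping of signs and factorials coming from the normalization $\tfrac{(-1)^{i-1}}{(i-1)!}$ in (\ref{e}).
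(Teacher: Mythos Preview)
Your derivation of the feedback $C$ via the rank-one determinant identity and Lagrange interpolation, and your reduction to canonical form through the Krylov basis $\mathfrak e_i=\tfrac{(-1)^{i-1}}{(i-1)!}M^{i-1}B$, are exactly the paper's arguments.

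For the block structure of $D$ the paper is more direct than your expansion $M^{j}B=A^{j}B+\sum_{\ell}(CM^{\ell-1}B)\,A^{j-\ell}B$. Setting $B':=AB$, it uses $CB=0$ to get $\widetilde AB=B'$, then your own observation $CB'=\sum_i c_i=\sum_i\omega_i^{2}=:S$ to obtain $\widetilde AB'=A^{2}B+SB=(-\omega^{2}+S)B=\lambda B$, with $\lambda=\diag(\lambda_1,\lambda_1,\dots,\lambda_N,\lambda_N)$. Since $\lambda$ commutes with $A$ and $C$ annihilates anything supported in the $y$-slots, $\widetilde A(\lambda^{m}B)=\lambda^{m}B'$ is free, and the paper then declares the pattern for $\mathfrak e_{2k-1},\mathfrak e_{2k}$ ``by induction''. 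All the ingredients you single out are exactly those used; the paper's economy is to run the recursion on the pair $(B,B')$ rather than to unwind $M^{j}B$ in powers of $A$.

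One caveat: the ``main obstacle'' you flag is genuine, and your anticipated ansatz that block $k$ of $M^{j-1}B$ is a pure power of $\lambda_k$ does \emph{not} close for $j\ge5$ --- nor does the paper's induction as written. At the even step one needs $\widetilde A(\lambda^{m}B')$, and the scalar $C(\lambda^{m}B')=\sum_i c_i\lambda_i^{m}$ does not combine with $-\omega_k^{2}\lambda_k^{m}$ to give $\lambda_k^{m+1}$. What does work is the identity $\sum_i c_i\,e_m^{(i)}=e_{m+1}(\omega^{2})$ (coefficient matching in the Lagrange formula), where $e_m^{(i)}$ is the $m$-th elementary symmetric polynomial in $\{\omega_j^{2}\}_{j\ne i}$; together with $e_{m+1}^{(k)}+\omega_k^{2}e_m^{(k)}=e_{m+1}(\omega^{2})$ this gives block $k$ of $\widetilde A^{2m}B$ equal to $(0,e_m^{(k)})$. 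So the clean closed form has $e_{j-1}^{(i)}$ in place of $\lambda_i^{\,j-1}$; the two agree only for $j\le2$, which is why the paper's check ``$\widetilde A^{2}B'=\lambda B'$'' goes through but the next step would not.
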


When regarded as an existence theorem of a canonical form, without explicit formulas for matrices $C$ and $D$, Lemma
\ref{canonical} is a particular case of the Brunovsky theorem \cite{brun}. We give a proof of the lemma in Appendix
\ref{brunovsky_lemma}.

Following \cite{anan}, introduce a matrix function of time related to system (\ref{AB}):
\begin{equation}\label{delta}
    \delta(\mathfrak T)=\diag({\mathfrak T}^1,{\mathfrak T}^{2}, \dots, {\mathfrak T}^{2N})^{-1}.
\end{equation}
Below the parameter ${\mathfrak T}$ will be a function ${\mathfrak T}={\mathfrak T}(\mathfrak{x})$ of the phase vector.
Define the matrices in accordance with \cite{anan,korob}
\begin{equation}\label{M}
\begin{array}{l}
    \mathfrak{q}=(\mathfrak{q}_{ij}),\, \mathfrak{q}_{ij}=\int_0^1 x^{i+j-2}(1-x)dx=[(i+j)(i+j-1)]^{-1}, \\[1em]
    \mathfrak{Q}=\mathfrak{q}^{-1},\quad
    \mathfrak{C}=-\frac{1}{2}\mathfrak{B}^{*}\mathfrak{Q}, \quad \mathfrak{M}=\diag(1, 2, \dots, 2N). \\
\end{array}
\end{equation}
Define the feedback control by the equation
\begin{equation}\label{u}
    {\mathfrak u}(\mathfrak{x})=\mathfrak{C}\delta({\mathfrak T}( \mathfrak{x}))\mathfrak{x},
\end{equation}
where the function ${\mathfrak T}={\mathfrak T}(\mathfrak{x})$ is defined implicitly by the following relation:
\begin{equation}\label{condu}
    \left\langle{\mathfrak{Q}\delta({\mathfrak T})\mathfrak{x},\delta({\mathfrak T})\mathfrak{x}}\right\rangle=\kappa^2.
\end{equation}
The value of the positive constant $\kappa$ will be chosen below. A basic result on the steering of the canonical system
(\ref{AB0})--(\ref{AB}) to zero is as follows:
\begin{theorem}\label{main}
    The following statements hold true:
    \begin{itemize}
        \item[A:] The matrix  $\mathfrak{Q}$ defines a common quadratic Lyapunov function for the matrices $-\mathfrak{M}$ and $\mathfrak {A+BC}.$
        \item[B:] equation (\ref{condu}) defines ${\mathfrak T}={\mathfrak T}(\mathfrak{x})$ uniquely.
        \item[C:] Control (\ref{u}) is bounded: $|{\mathfrak u}|\leq\frac{\kappa}{2}\sqrt{\mathfrak{Q}_{11}} $.
        \item[D:] Control (\ref{u}) brings the point $\mathfrak{x}$ to $0$ in time ${\mathfrak T}(\mathfrak{x})$.
    \end{itemize}
\end{theorem}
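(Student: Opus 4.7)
The entire theorem reduces to two matrix identities whose derivation is the main obstacle; once they are in hand the rest is routine. Let $v(x) = (1, x, x^{2}, \dots, x^{2N-1})^{*}$. The formula $\mathfrak{q}_{ij} = \int_{0}^{1} x^{i+j-2}(1-x)\,dx$ reads $\mathfrak{q} = \int_{0}^{1} v(x)v(x)^{*}(1-x)\,dx$, which makes $\mathfrak{q}>0$ (so $\mathfrak{Q}=\mathfrak{q}^{-1}$ exists). The subdiagonal structure of $\mathfrak{A}$ yields $\mathfrak{A}v(x) = -v'(x)$, and $v(0)=\mathfrak{B}$; integrating $\mathfrak{A}\mathfrak{q} + \mathfrak{q}\mathfrak{A}^{*} = -\int_{0}^{1}(v v^{*})'(1-x)\,dx$ by parts then gives $\mathfrak{A}\mathfrak{q} + \mathfrak{q}\mathfrak{A}^{*} = \mathfrak{B}\mathfrak{B}^{*} - H$, where $H_{ij}=1/(i+j-1)$ is the Hilbert matrix. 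A direct entry computation from $(\mathfrak{M}\mathfrak{q})_{ij} = i\,\mathfrak{q}_{ij}$ gives $\mathfrak{M}\mathfrak{q} + \mathfrak{q}\mathfrak{M} = H$. Conjugating both identities by $\mathfrak{Q}$ and using $\mathfrak{C} = -\tfrac{1}{2}\mathfrak{B}^{*}\mathfrak{Q}$ to cancel the $\mathfrak{B}\mathfrak{B}^{*}$ term produces the key pair $\mathfrak{Q}\mathfrak{M}+\mathfrak{M}\mathfrak{Q} = \mathfrak{Q}H\mathfrak{Q}>0$ and $\mathfrak{Q}(\mathfrak{A}+\mathfrak{B}\mathfrak{C}) + (\mathfrak{A}+\mathfrak{B}\mathfrak{C})^{*}\mathfrak{Q} = -\mathfrak{Q}H\mathfrak{Q}<0$, which proves (A). In particular these two expressions are exact negatives of each other, which I reuse in (D).

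For (B), fix $\mathfrak{x}\ne 0$ and set $w(\mathfrak{T})=\delta(\mathfrak{T})\mathfrak{x}$, $g(\mathfrak{T}) = \langle \mathfrak{Q}w,w\rangle$. Since $\delta'(\mathfrak{T}) = -\mathfrak{T}^{-1}\mathfrak{M}\delta(\mathfrak{T})$, part (A) gives $g'(\mathfrak{T}) = -\mathfrak{T}^{-1}\langle (\mathfrak{Q}\mathfrak{M}+\mathfrak{M}\mathfrak{Q})w,w\rangle<0$; combined with $g(0^{+})=+\infty$ and $g(+\infty)=0$ this proves unique solvability of (\ref{condu}). For (C), introduce the scalar product $\langle a,b\rangle_{\mathfrak{Q}} := a^{*}\mathfrak{Q}b$, under which the constraint reads $|w|_{\mathfrak{Q}}=\kappa$ and $\mathfrak{u} = -\tfrac{1}{2}\mathfrak{B}^{*}\mathfrak{Q}w = -\tfrac{1}{2}\langle \mathfrak{B},w\rangle_{\mathfrak{Q}}$. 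Cauchy--Schwarz in this inner product then yields $|\mathfrak{u}| \le \tfrac{1}{2}|\mathfrak{B}|_{\mathfrak{Q}}\,\kappa = \tfrac{\kappa}{2}\sqrt{\mathfrak{Q}_{11}}$.

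For (D), I differentiate (\ref{condu}) along a trajectory of (\ref{AB0}). The scaling identities $\delta(\mathfrak{T})\mathfrak{A} = \mathfrak{T}^{-1}\mathfrak{A}\delta(\mathfrak{T})$ and $\delta(\mathfrak{T})\mathfrak{B} = \mathfrak{T}^{-1}\mathfrak{B}$, both immediate from the subdiagonal form of $\mathfrak{A}$ and the fact that $\mathfrak{B}=e_{1}$, give $\delta(\mathfrak{T})\dot{\mathfrak{x}} = \mathfrak{T}^{-1}(\mathfrak{A}+\mathfrak{B}\mathfrak{C})w$. Combining this with $\delta'(\mathfrak{T})\mathfrak{x} = -\mathfrak{T}^{-1}\mathfrak{M}w$ in $\tfrac{d}{dt}g=0$ yields $\dot{\mathfrak{T}}\,\langle(\mathfrak{Q}\mathfrak{M}+\mathfrak{M}\mathfrak{Q})w,w\rangle = \langle(\mathfrak{Q}(\mathfrak{A}+\mathfrak{B}\mathfrak{C})+(\mathfrak{A}+\mathfrak{B}\mathfrak{C})^{*}\mathfrak{Q})w,w\rangle$, which by the reused identity from (A) collapses to $\dot{\mathfrak{T}}\equiv -1$. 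Hence $\mathfrak{T}(t) = \mathfrak{T}(\mathfrak{x}(0))-t$, and since $|w|_{\mathfrak{Q}}=\kappa$ stays bounded while $\mathfrak{x}_{i} = \mathfrak{T}^{i}w_{i}$, the trajectory reaches the origin exactly at $t=\mathfrak{T}(\mathfrak{x}(0))$.
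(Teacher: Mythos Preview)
Your proof is correct and follows essentially the same route as the paper: both exploit the integral representation $\mathfrak{q}=\int_0^1 v(x)v(x)^*(1-x)\,dx$ and integration by parts to obtain the Lyapunov identities, and both derive $\dot{\mathfrak{T}}=-1$ from the scaling relations $\delta\mathfrak{A}\delta^{-1}=\mathfrak{T}^{-1}\mathfrak{A}$, $\delta\mathfrak{B}=\mathfrak{T}^{-1}\mathfrak{B}$, $\tfrac{d}{d\mathfrak{T}}\delta=-\mathfrak{T}^{-1}\mathfrak{M}\delta$ together with the key equality that the two Lyapunov forms are exact negatives of each other. The only real difference is packaging: the paper works in the functional model (identifying $\mathbb{R}^{2N}$ with polynomials of degree $<2N$, so that $\mathfrak{A}^*$ is $-\partial_x$ and $\mathfrak{M}^*$ is $\partial_x(x\,\cdot)$) and computes the quadratic forms $\alpha(f)$, $\mu(f)$ directly, whereas you stay at the matrix level and make the Hilbert matrix $H_{ij}=1/(i+j-1)$ explicit via $\mathfrak{M}\mathfrak{q}+\mathfrak{q}\mathfrak{M}=H$ and $\mathfrak{A}\mathfrak{q}+\mathfrak{q}\mathfrak{A}^*=\mathfrak{B}\mathfrak{B}^*-H$. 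Your formulation is slightly more concrete; the paper's functional-model language makes the generalization to other weights $q(x)$ (which it sketches) more transparent.
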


\begin{proof}
{\bf Statement A} amounts to the matrix inequalities
\begin{equation}\label{lmi}
 \{\mathfrak{M,q}\}>0,\,
 \{\mathfrak{A,q}\}-\frac12\{\mathfrak{B},\mathfrak{B}^*\}<0,
\end{equation}
where we use the  ``Jordan brackets'' $\{\alpha,\beta\}=\alpha\beta+\beta^*\alpha^*$. Indeed, if
$Q(x,x)=\langle{Qx,x}\rangle$ is a quadratic Lyapunov function for a stable matrix $A,$ this implies the matrix
inequality $\{Q,A^*\}<0$, or, in other words, the relation
\begin{equation}
    \{A,Q^{-1}\}=Q^{-1}\{Q,A^*\}Q^{-1}<0.
\end{equation}
Moreover, the matrix $\frac12\{A,Q^{-1}\}$ corresponds to the negative quadratic form
\begin{equation}
    Q^{-1}(x,A^*x)
\end{equation}
A straightforward computation shows that $\{\mathfrak{BC,q}\}=-\frac12\{\mathfrak{B},\mathfrak{B}^*\}$.

We implement the phase space  $\mathbb{R}^{2N}$ as the space of polynomials $f$ of degree less than $2N$ in the variable
$x$. Then the canonical basis $\mathfrak{e}_k$ of $\mathbb{R}^{2N}$ is represented by the monomials $m_k(x)=x^{k-1}$.
Note that the matrix $\mathfrak{A}^*$ is represented by the differentiation operator $f\mapsto-\frac{\partial}{\partial
x}f$, while the matrix $\mathfrak{M}^*=\mathfrak{M}$ is represented by the operator $f\mapsto\frac{\partial}{\partial
x}xf$. The dual vector $B^*=(1,0,\dots,0)$ is represented by the functional $f\mapsto f(0)$. Consider relations
(\ref{lmi}) in the functional model. The quadratic form $\mathfrak q(f,f)$, related to the matrix $\mathfrak q$, takes
the form $\int_0^1 f^2(x)(1-x)dx$. It is a positive form. The matrices $\{\mathfrak M,\mathfrak q\},\,\{\mathfrak
A,\mathfrak q\},\,\{\mathfrak{B},\mathfrak{B}^*\}$ are represented by the following quadratic forms in the functional
model:
\begin{equation}\label{forms}
    \begin{array}{l}
        \mu(f)=\mathfrak q(f,\mathfrak M^*f)=2\int \left(\frac{\partial}{\partial x}xf\right)(x)f(x)(1-x)dx,\\[1em]
        \alpha(f)=\mathfrak q(f,\mathfrak A^*f)=-2\int \left(\frac{\partial}{\partial x}f(x)\right)f(x)(1-x)dx,\,\beta(f)=2f(0)^2,
    \end{array}
\end{equation}
where the integration is over the interval $[0,1]$. Integrating by parts, we obtain
\begin{equation}\label{alpha}
\begin{array}{lll}
    &\alpha(f)&=-\int \frac{\partial}{\partial x}f^2(x)(1-x)dx=-\int f^2(x)dx+f^2(0)\\[1em]
    &\mu(f)&=2\int f^2(x)(1-x)dx-\int f^2(x)[(1-x)x]'dx=\\[1em]&&=2\int f^2(x)[(1-x)+\frac12(2x-1)]dx=\int f^2(x)dx.
\end{array}
\end{equation}
Therefore, $\alpha(f)-\frac12\beta(f)=-\mu(f)$, and both sides of the latter equality coincide with the negative
quadratic form $-\int f^2(x)dx$. This proves inequalities (\ref{lmi}), and Statement A of Theorem. Moreover, we have
shown that
\begin{equation}\label{forms2}
    -\{\mathfrak{M,q}\}=\{\mathfrak{A,q}\}+\{\mathfrak{BC},\mathfrak{q}\}.
\end{equation}
The last relation is equivalent to the equality of quadratic forms
\begin{equation}\label{forms3}
    \left\langle{\mathfrak{Q}y,\mathfrak{[A+BC]}y}\right\rangle=-\left\langle{\mathfrak{Q}y,\mathfrak{M}y}\right\rangle
\end{equation}

We note that the proceeding arguments can be easily generalized to the case when the matrix $\mathfrak q$ is represented
by a quadratic form
\begin{equation}
    \int_0^{\infty} f^2(x)q(x)dx,
\end{equation}
where the nonnegative function $q$ is monotone nonincreasing ($q'\leq0$), decreases at infinity faster than any power of
$x$, and satisfies $q(0)=1$. Indeed, the matrices
\begin{equation}
    \{\mathfrak M,\mathfrak q\},\quad \{\mathfrak A,\mathfrak q\},\quad \{\mathfrak{B},\mathfrak{B}^*\}
\end{equation}
correspond in the functional model to the following quadratic forms:
\begin{equation}\label{forms22}
    \begin{array}{l}
        \mu(f)=\mathfrak q(f,\mathfrak M^*f)=2\int \left(\frac{\partial}{\partial x}xf\right)fqdx,\\[1em]
        \alpha(f)=\mathfrak q(f,\mathfrak A^*f)=-2\int \left(\frac{\partial}{\partial x}f\right)fqdx,\,\beta(f)=2f(0)^2,
    \end{array}
\end{equation}
where the integration is over the ray  $[0,+\infty)$. Integrating by parts, we obtain
\begin{equation}\label{alpha22}
    \begin{array}{lll}
        &\alpha(f)&=-\int \left(\frac{\partial}{\partial x}f^2\right)qdx=\int f^2q'dx+f^2(0)q(0)\\[1em]
        &\mu(f)&=-2\int xf(f'q+fq')dx=-\int \left(\left(\frac{\partial}{\partial x}{f^2}\right)xq+2f^2xq'\right)dx\\[1em]&&=\int f^2(q-xq')dx.
    \end{array}
\end{equation}
Thus, inequalities (\ref{lmi}) hold true.

\noindent {\bf Statement B} follows from strict monotonicity of the function ${\mathfrak T}\mapsto
\left\langle{\mathfrak{Q}\delta({\mathfrak T})\mathfrak{x},\delta({\mathfrak T})\mathfrak{x}}\right\rangle$ which in turn
follows immediately from the first inequality in (\ref{lmi}).

\noindent {\bf Statement C} follows from the Cauchy inequality. Indeed,
$\mathfrak{u}=-\frac12\left\langle{\mathfrak{Q}\mathfrak{B},y}\right\rangle$, where $y=\delta({\mathfrak T})\mathfrak{x}$
and $\left\langle{\mathfrak{Q}y,y}\right\rangle=\kappa^2$. Therefore
\begin{equation}
    |\mathfrak{u}|\leq\frac12\left\langle{\mathfrak{Q}y,y}\right\rangle^{1/2}\left\langle{\mathfrak{Q}\mathfrak{B},\mathfrak{B}}\right\rangle^{1/2}
    \leq\frac{\kappa}{2}\left\langle{\mathfrak{QB},\mathfrak{B}}\right\rangle^{1/2}
    =\frac{\kappa\sqrt{\mathfrak{Q}_{11}}}{2}.
\end{equation}

\noindent {\bf Statement D} follows by computing of the total derivative $\dot {\mathfrak T}$. Letting
$\delta=\delta({\mathfrak T})$, we obtain
\begin{equation}\label{prop}
    \delta \mathfrak{A}\delta^{-1}={\mathfrak T}^{-1}A,\,\, \delta \mathfrak{B}={\mathfrak T}^{-1}\mathfrak{B},\,\,\frac{d}{d{\mathfrak T}}\delta=-{\mathfrak T}^{-1}\mathfrak{M}\delta,
\end{equation}
which immediately implies for $y=\delta({\mathfrak T})\mathfrak{x}$, the equation
\begin{equation}\label{y2}
    \dot y={\mathfrak T}^{-1}\left(\mathfrak{A}y+\mathfrak{B}u-\dot {\mathfrak T}\mathfrak{M}y\right).
\end{equation}
Then it follows from relations (\ref{u}) and (\ref{condu}) that
\begin{equation}\label{T222}
    \left\langle{\mathfrak{Q}y,\mathfrak{[A+BC]}y-\dot {\mathfrak T}\mathfrak{M}y}\right\rangle=0,
\end{equation}
but in view of (\ref{forms3}), this implies  $\dot{\mathfrak T}=-1.$

We note that in a more general situation where the matrix $\mathfrak q$ is related to a quadratic form $\int_0^{\infty}
f^2(x)q(x)dx$, Statement D is valid iff $q'=-(q-xq')$. This implies easily that $q=(1-x)_+,$ so Statement D characterizes
the matrix $\mathfrak q$ of this kind essentially uniquely.
\end{proof}

{\bf Remark.} Suppose that $\tau(\mathfrak{x})$ is the minimum time for steering a state $\mathfrak{x}$ of the canonical
system (\ref{AB0}) to zero by using any admissible control $v,\,|v|\leq1$. Then ${\mathfrak T}(\mathfrak{x})$ and
$\tau(\mathfrak{x})$ are comparable, meaning that $1\leq{{\mathfrak T}(\mathfrak{x})}/{\tau(\mathfrak{x})}\leq C,$ where
$C$ is a constant. This follows from equation (\ref{condu}) and the fact, that the matrix $\delta({\mathfrak T})^{-1}$
brings the reachable set $\mathcal{D}(1)$ of the canonical system (\ref{AB0}) to $\mathcal{D}({\mathfrak T})$:
$\delta({\mathfrak T})\mathcal{D}({\mathfrak T})=\mathcal{D}(1)$ in the unit time.

Theorem \ref{main} was obtained in \cite{korob} in a less precise form. Our proof is about ten times shorter. Moreover,
the method applied allows us to indicate a large class of common quadratic Lyapunov functions for the matrices
$-\mathfrak{M}$ and $\mathfrak {A+BC}$. The two number-theoretic results below are not directly related to control
problems.
\begin{theorem}\label{main2}
    The matrix $\mathfrak Q$ is even integer: $\mathfrak Q\in 2M_{2N}(\mathbb{Z}).$
\end{theorem}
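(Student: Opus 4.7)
The plan is to use the functional model from the proof of Theorem \ref{main}, in which $\mathfrak{q}$ is the Gram matrix of the monomial basis $\{x^{k-1}\}_{k=1}^{2N}$ with respect to the positive-definite bilinear form $(f,g) = \int_0^1 f(x)g(x)(1-x)\,dx$ on polynomials of degree less than $2N$. The strategy is to diagonalize $\mathfrak{q}$ through an integer change of basis to orthogonal polynomials and then read off the divisibility by $2$ directly from the diagonal.

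For $n = 0, 1, \ldots, 2N-1$ I would introduce the shifted Jacobi polynomials $P_n$ on $[0,1]$ with weight $(1-x)$ via the Rodrigues-type formula
$$P_n(x) = \frac{1}{n!\,(1-x)}\,\frac{d^n}{dx^n}\bigl[x^{n}(1-x)^{n+1}\bigr].$$
Applying the Leibniz rule, the factor $(1-x)$ always cancels, and one obtains the closed form
$$P_n(x) = \sum_{j=0}^{n}(-1)^{j}\binom{n}{j}\binom{n+1}{j}\,x^{j}(1-x)^{n-j},$$
exhibiting $P_n$ as a polynomial of degree $n$ with integer coefficients. By the standard shifted-Jacobi theory, or directly by integration by parts in the Rodrigues representation, the $P_n$ are pairwise orthogonal under $(\cdot,\cdot)$, with squared norm
$$h_n = \int_0^{1}P_n(x)^2(1-x)\,dx = \frac{1}{2(n+1)}.$$

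Now let $A$ be the $2N\times 2N$ lower-triangular matrix whose $(k,l)$ entry is the coefficient of $x^{l-1}$ in $P_{k-1}$; this is an integer matrix. The orthogonality relations translate into the matrix identity
$$A\,\mathfrak{q}\,A^{T} = D,\qquad D = \diag\bigl(h_0, h_1, \ldots, h_{2N-1}\bigr) = \diag\bigl(\tfrac12, \tfrac14, \ldots, \tfrac{1}{4N}\bigr).$$
Inverting gives $\mathfrak{Q} = A^{T}D^{-1}A$ with $D^{-1} = \diag(2, 4, \ldots, 4N)$, so
$$\mathfrak{Q}_{ij} = 2\sum_{k=1}^{2N} k\, A_{ki}A_{kj},$$
and since $A$ has integer entries, every $\mathfrak{Q}_{ij}$ is $2$ times an integer. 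This proves $\mathfrak{Q} \in 2M_{2N}(\mathbb{Z})$.

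The main obstacle is calibrating the Rodrigues normalization so that integer coefficients and the clean norm $h_n = 1/(2(n+1))$ hold simultaneously: a mismatched overall scalar would either ruin the integrality of $A$ or destroy the factor of $2$ extracted from $D^{-1}$. One verifies $h_n$ by the substitution $u = 2x-1$, which converts the integral into the classical Jacobi norm $\|P_n^{(1,0)}\|^2 = 2/(n+1)$ on $[-1,1]$, the Jacobian factor of $1/4$ producing exactly $1/(2(n+1))$.
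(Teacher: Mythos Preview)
Your proof is correct and follows essentially the same route as the paper: both diagonalize $\mathfrak{q}$ via the shifted Jacobi polynomials given by the Rodrigues formula $P_n(x)=\frac{1}{n!(1-x)}\partial^n[x^n(1-x)^{n+1}]$, observe that these have integer coefficients and squared norm $1/(2(n+1))$, and conclude $\mathfrak{Q}=A^{*}\diag(2k)A$ with $A$ integral. The only cosmetic differences are that the paper computes the norm directly via the Beta function rather than by reducing to the classical Jacobi case, and argues integrality from the fact that $\frac{1}{n!}\partial^n$ preserves $\mathbb{Z}[x]$ rather than expanding by Leibniz.
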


A strengthening of the above result is related to the value of the matrix element $\mathfrak{Q}_{11}$:
\begin{theorem}\label{main3}
    The matrix element $\mathfrak{Q}_{11}=2N(2N+1)$.
\end{theorem}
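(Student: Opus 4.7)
The plan is to reinterpret $\mathfrak{Q}_{11}$ via a reproducing-kernel minimization problem in the functional model already set up in the proof of Theorem \ref{main}, and then to evaluate the kernel explicitly using shifted Jacobi polynomials.

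First, recall that in the functional model introduced for Theorem \ref{main}, $\mathbb{R}^{2N}$ is identified with polynomials of degree $<2N$ via the basis $m_k(x)=x^{k-1}$, and $\mathfrak{q}$ is the Gram matrix of the inner product $\langle f,g\rangle=\int_0^1 fg(1-x)\,dx$. The first coordinate of $f=\sum c_k m_k$ in this basis equals $c_1=f(0)$. The standard identity $(\mathfrak{q}^{-1})_{11}=1/\min\{v^T\mathfrak{q}v:v_1=1\}$, which follows from a one-line Lagrange multiplier argument (or Schur complementation), translates to
\[
\mathfrak{Q}_{11}=\frac{1}{\min\left\{\int_0^1 f(x)^2(1-x)\,dx:\ f(0)=1,\ \deg f<2N\right\}}.
\]

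Second, I would recast this minimum via reproducing kernels. On the finite-dimensional Hilbert space $\mathcal{H}_N$ of polynomials of degree $<2N$ with the above inner product, the evaluation functional $L:f\mapsto f(0)$ is continuous and hence admits a Riesz representer $K\in\mathcal{H}_N$ satisfying $f(0)=\langle f,K\rangle$. Then $\|L\|^2=\langle K,K\rangle=K(0)$, and therefore $\mathfrak{Q}_{11}=\|L\|^2=K(0)$. For any orthonormal basis $\{e_n\}_{n=0}^{2N-1}$ of $\mathcal{H}_N$ one has $K(x)=\sum_n e_n(0)e_n(x)$, so $\mathfrak{Q}_{11}=\sum_{n=0}^{2N-1}e_n(0)^2$.

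Third, I would take the basis of shifted Jacobi polynomials $P_n^{(1,0)}(2x-1)$, $n=0,\dots,2N-1$, which are orthogonal on $[0,1]$ with weight $1-x$ (substitute $y=2x-1$ into the classical Jacobi weight $(1-y)^1(1+y)^0$ on $[-1,1]$). Two classical facts about Jacobi polynomials then finish the computation: the classical norm formula, combined with the $\frac14$ from $dx\,(1-x)=\frac14\,dy\,(1-y)$, gives $\|P_n^{(1,0)}(2x-1)\|^2=\tfrac{1}{2(n+1)}$ in our inner product, so that $e_n(x)=\sqrt{2(n+1)}\,P_n^{(1,0)}(2x-1)$ is orthonormal; and the endpoint identity $P_n^{(\alpha,0)}(-1)=(-1)^n$ yields $e_n(0)=(-1)^n\sqrt{2(n+1)}$. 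Summing,
\[
\mathfrak{Q}_{11}=\sum_{n=0}^{2N-1}e_n(0)^2=\sum_{n=0}^{2N-1}2(n+1)=2\sum_{m=1}^{2N}m=2N(2N+1).
\]

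The main potential pitfall I foresee is purely bookkeeping: reconciling the factor $\tfrac14$ produced by the $[-1,1]\to[0,1]$ substitution with the classical Jacobi norm $\tfrac{2^{\alpha+\beta+1}}{2n+\alpha+\beta+1}\cdot\tfrac{\Gamma(n+\alpha+1)\Gamma(n+\beta+1)}{\Gamma(n+\alpha+\beta+1)n!}$ evaluated at $\alpha=1,\beta=0$. Everything else in the argument is either functional-analytic boilerplate (reproducing kernel in a finite-dimensional Hilbert space) or the telescoping sum $\sum_{m=1}^{2N}m=N(2N+1)$.
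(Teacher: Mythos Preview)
Your proof is correct. Both you and the paper end up computing exactly the same sum $\sum_{n=0}^{2N-1}2(n+1)\,P_n(0)^2$ with $P_n(0)^2=1$, using the orthogonal polynomials for the weight $(1-x)\,dx$ on $[0,1]$ and the norm $\|P_n\|^2=\tfrac{1}{2(n+1)}$. The difference is in how you \emph{arrive} at that sum: the paper reads it off the matrix identity $\mathfrak{Q}=A^*\operatorname{diag}(2k)\,A$ obtained in the proof of Theorem~\ref{main2} (so $\mathfrak{Q}_{11}=\sum_k 2k\,a_{k1}^2$ with $a_{k1}=P_{k-1}(0)$, and $P_n(0)=1$ is checked directly from the Rodrigues formula), whereas you obtain it from the variational identity $(\mathfrak{q}^{-1})_{11}=1/\min\{v^{\!*}\mathfrak{q}v:v_1=1\}$ and the reproducing-kernel expansion $K(0)=\sum_n e_n(0)^2$. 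Your route is self-contained in the sense that it does not rely on Theorem~\ref{main2}, and it explains conceptually why the answer is the diagonal value of a Christoffel--Darboux kernel; the paper's route is more elementary in that it needs no external Jacobi-polynomial formulas, only the Rodrigues formula it already wrote down. The bookkeeping you flag (the factor $\tfrac14$ from the change of variables against the classical Jacobi norm $\tfrac{2^{2}}{2n+2}=\tfrac{2}{n+1}$ at $(\alpha,\beta)=(1,0)$) indeed gives $\tfrac{1}{2(n+1)}$, matching the paper's direct computation.
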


We prove these theorems in Appendices \ref{proof_2}--\ref{proof_3}. Both proofs are based on the consideration of
orthogonal polynomials. This idea goes back at least to Hilbert \cite{hilbert}.

\begin{corollary}\label{main31}
Control (\ref{u}) is bounded by $\frac{\kappa}{2}\sqrt{2N(2N+1)}$.
\end{corollary}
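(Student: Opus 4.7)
The plan is to combine two previously established results directly, since Corollary \ref{main31} is essentially a substitution. Statement C of Theorem \ref{main} already provides the bound $|{\mathfrak u}(\mathfrak{x})|\leq\frac{\kappa}{2}\sqrt{\mathfrak{Q}_{11}}$ for the feedback control defined by (\ref{u}), with the proof going through the Cauchy--Schwarz inequality applied to $\mathfrak{u}=-\tfrac12\langle\mathfrak{Q}\mathfrak{B},y\rangle$ subject to the constraint $\langle\mathfrak{Q}y,y\rangle=\kappa^2$. Theorem \ref{main3} identifies the value of the top-left matrix entry: $\mathfrak{Q}_{11}=2N(2N+1)$.

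Thus the only step needed is to insert the explicit value from Theorem \ref{main3} into the abstract bound from Theorem \ref{main}(C). Concretely, one writes
\[
    |{\mathfrak u}(\mathfrak{x})|\leq\frac{\kappa}{2}\sqrt{\mathfrak{Q}_{11}}=\frac{\kappa}{2}\sqrt{2N(2N+1)},
\]
which is exactly the asserted bound.

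There is no real obstacle here; the entire content of the corollary lies in the two theorems it invokes, and the proof reduces to a one-line substitution. The nontrivial work has already been done in establishing Theorem \ref{main3} (the combinatorial identity for the inverse Hilbert-type matrix $\mathfrak{Q}=\mathfrak{q}^{-1}$), whose proof is deferred to the appendix. Accordingly, the write-up can be a single sentence citing (C) of Theorem \ref{main} together with Theorem \ref{main3}.
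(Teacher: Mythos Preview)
Your proposal is correct and matches the paper's own treatment: the paper simply writes ``The corollary is obvious,'' which amounts to exactly the substitution of $\mathfrak{Q}_{11}=2N(2N+1)$ from Theorem~\ref{main3} into the bound $|\mathfrak{u}|\leq\frac{\kappa}{2}\sqrt{\mathfrak{Q}_{11}}$ of Theorem~\ref{main}(C).
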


The corollary is obvious. Numerical experiments suggest the following:

{\it $\mathfrak{Q}_{11}$ is a divisor of all the elements of the matrix $\mathfrak Q$:  $\mathfrak
Q\in\mathfrak{Q}_{11}M_{2N}(\mathbb{Z})$}.

The explicit form of $\mathfrak{Q}=\mathfrak{q}^{-1}$ in the $4$-dimensional case is
\begin{equation}\label{Q2}
20\times\left( {\begin{array}{*{20}c}
   {1}       & { -9}  & {21}      & {-14}   \\
   { - 9} & {111}    & {-294}  & {210}  \\
   {21}     & { -294} & {840}   & {-630} \\
   { -14}  & {210}   & {-630} & {490}   \\
\end{array}} \right)
\end{equation}
(we note that $\mathfrak Q_{11}=20$). This gives equation (\ref{condu}) in the following explicit form:
\begin{equation}
\begin{array}{l}
{\mathfrak T}^8-20{\mathfrak x}_{1}^2{\mathfrak T}^6+360{\mathfrak x}_{1}{\mathfrak x}_{2}{\mathfrak T}^5-(2220{\mathfrak
x}_2^2+840{\mathfrak x}_1{\mathfrak x}_3){\mathfrak T}^4+(11760{\mathfrak x}_{2}{\mathfrak x}_{3}+ \\[1em]
560{\mathfrak x}_{1}{\mathfrak x}_{4}){\mathfrak T}^3 - (8400{\mathfrak x}_{2}{\mathfrak x}_{4}+16800{\mathfrak
x}^{2}_{3}){\mathfrak T}^2+25200{\mathfrak x}_{3}{\mathfrak x}_{4}{\mathfrak T}-9800{\mathfrak x}_{4}^2=0.
\end{array}
\end{equation}
A tight bound for the absolute value of control (\ref{u}) is $\frac\kappa2\sqrt {\mathfrak{Q}_{11}}=\kappa\sqrt5 $, where
$\kappa$ is the constant from (\ref{condu}). If we want that  $|{\mathfrak u}|\leq1/2$, we put $\kappa=(2\sqrt5)^{-1}$.
This is the bound we use at the terminal stage of the control.

\section{Control matching}\label{match}
In Section \ref{terminal}, we designed a local feedback control that works in the neighborhood of zero. The switching to
this control should occur at the boundary of an \textit{invariant} domain with respect to the phase flow so that the
local feedback control can be applied within the interior. We confine ourselves to the invariant domains of the form
\begin{equation}\label{theta}
    G_\Theta=\{{\mathfrak x}: {\mathfrak T}({\mathfrak x})\leq\Theta\}=\{{\mathfrak x}: \langle\mathfrak{Q}\delta(\Theta){\mathfrak x},\delta(\Theta){\mathfrak x}\rangle\leq1\}.
\end{equation}
The invariant domain  $G_\Theta$ should satisfy two conditions:
\begin{itemize}\label{conditionsG}
    \item[A:] The domain $G_\Theta$ contain the inefficiency domain $\{\rho(x)\leq UC(\underline\omega)\}$ of the preceding control;
    \item[B:] The domain  $G_\Theta$ is contained in the strip $\{|Cx|\leq1/2\}$, where $C$ is the matrix (\ref{C}).
\end{itemize}
Condition B allows one to use at the terminal stage controls ${\mathfrak u}$ which are less than 1/2 in absolute value.
Therefore, the constant $\kappa^2$ in (\ref{condu}) should be equal to $(2N(2N+1))^{-1}$. If we applied at the preceding
stage the control (\ref{controlU}), Condition A says that the set $UC(\underline\omega)\Omega$ is contained in
$G_\Theta$. Here $C(\underline\omega)$ is the estimate for the ``radius'' of the attractor-free domain found in
Subsection \ref{attractor2}. In other words, the following inequality should be fulfilled for the support functions:
\begin{equation}\label{condA}
    UC(\underline\omega)H_\Omega(D^*p)\leq\left\langle{\delta(\Theta)^{-1}\mathfrak{q}\delta(\Theta)^{-1}p,p}\right\rangle^{1/2},
\end{equation}
where $D$ is the matrix (\ref{D}). It is clear that the inequality holds, provided that  $U$ is sufficiently small.

Condition B says precisely that the value of the support function of the ellipsoid $G_\Theta$ at the vector ${D^*}^{-1}C$
does not exceed $1/2$ in absolute value. In other words,
\begin{equation}\label{condB}
    \left\langle\delta(\Theta)^{-1}\mathfrak{q}\delta(\Theta)^{-1}{D^*}^{-1}C,{D^*}^{-1}C\right\rangle^{1/2}\leq1/2.
\end{equation}
Certainly, this inequality holds for sufficiently small $\Theta$. Once $\Theta$ is chosen, we have to choose the bound
$U$ for the control at the second stage in accordance with Inequality (\ref{condA}). Then Conditions A and B are met. The
switching to the third, terminal stage should happen upon arriving at the boundary
$\{(\mathfrak{Q}\delta(\Theta){\mathfrak x},\delta(\Theta){\mathfrak x})=1\}$ of $G_\Theta$. Here the vector ${\mathfrak
x}$ is related to the phase vector $x$ by $x=D{\mathfrak x}$ and $D$ is matrix (\ref{D}).

The switching to the second stage of control, when the bound for admissible controls drops from 1 to $U$, should happen
before getting into the inefficiency zone of the initial control. Therefore, the switching should happen upon reaching
the value $C(\underline\omega)$ of the  ``radius''.

\section{Final asymptotic result}\label{asymp}
Now we can state the final asymptotic theorem:

\begin{theorem}\label{main_approx22}
Assume that system (\ref{syst1})--(\ref{syst2}) of oscillators is non-resonant. Let $T=T(x)$ be the motion time from the
initial point $x$ to the equilibrium under our three-stage control, and let $\tau=\tau(x)$ be the minimum time. Then, as
$\rho(x)\to+\infty$, we have asymptotic equalities
\begin{equation}\label{approx_T12}
    \rho(x)/T(x)=1+o(1), \, \tau(x)/T(x)=1+o(1).
\end{equation}
In the resonant case, we have non-asymptotic inequalities
\begin{equation}\label{approx_T121}
    C(\underline\omega)\geq\rho(x)/T(x)\geq c(\underline\omega), \, 1\geq\tau(x)/T(x)\geq c(\underline\omega)
\end{equation}
for $\rho(x)\geq1,$ where $C(\underline\omega),\,c(\underline\omega)$ are strictly positive constants, depending on
eigenfrequencies of the system.
\end{theorem}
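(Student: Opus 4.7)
The plan is to decompose the total motion time as $T(x)=T_1+T_2+T_3$, the three consecutive sojourn times in the high-energy, middle-energy, and terminal zones, and to show that $T_2$ and $T_3$ are both bounded by constants that depend only on the eigenfrequencies $\underline\omega$, whereas $T_1$ carries the dominant asymptotics $\rho(x)(1+o(1))$. Stage~3 has duration $\mathfrak T(\mathfrak x)\le\Theta$ by Theorem \ref{main} D), with $\Theta$ chosen once and for all from $\underline\omega$. Stage~2 runs inside $\{\rho\le C(\underline\omega)\}$ while staying above $\{\rho\ge UC(\underline\omega)\}$, so Theorem \ref{observation52} gives $T_2\le (c/U)\,C(\underline\omega)$, again a constant depending only on $\underline\omega$. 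The whole weight of the theorem is therefore carried by Stage~1.

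For Stage~1 in the non-resonant case, Theorem \ref{main_approx} cannot be invoked naively because its hypothesis $M=\min\{\rho(0),\rho(T),T\}\to\infty$ fails: at the end of Stage~1 the radius equals the fixed constant $C(\underline\omega)$. The device is to split Stage~1 at an auxiliary level $\rho=R$ with $R=R(\rho(x))\to\infty$ and $R/\rho(x)\to 0$ (for concreteness, $R=\sqrt{\rho(x)}$). On the "high" sub-interval, where $\rho$ runs from $\rho(x)$ down to $R$, all three quantities $\rho(x),R$ and the sub-duration (which is $\ge(\rho(x)-R)/K$ because $|\dot\rho|\le K$) tend to infinity, so Theorem \ref{main_approx} applies and yields sub-duration $(\rho(x)-R)(1+o(1))=\rho(x)(1+o(1))$. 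On the "low" sub-interval from $R$ down to $C(\underline\omega)$, one invokes Theorem \ref{observation5} to bound the duration by $c(\underline\omega)(R-C(\underline\omega))=O(R)=o(\rho(x))$. Adding the three stages gives $T(x)=\rho(x)(1+o(1))$.

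To match this with a lower bound on the optimal time $\tau(x)$, I would apply the companion inequality (\ref{approx_T2}) of Theorem \ref{main_approx} along the minimum-time trajectory. Define $\tau_1=\inf\{t:\rho(x_*(t))=R\}$ along the optimal trajectory; the uniform bound $|\dot\rho|\le K$ (which holds for every admissible control, since $\partial\rho/\partial x$ is homogeneous of degree $0$ and hence bounded) ensures $\tau_1\ge(\rho(x)-R)/K\to\infty$, so (\ref{approx_T2}) on $[0,\tau_1]$ gives $(\rho(x)-R)/\tau_1\le 1+o(1)$, whence $\tau(x)\ge\tau_1\ge\rho(x)(1-o(1))$. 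Combined with the trivial $\tau(x)\le T(x)$ and the upper bound on $T(x)$, this proves both asymptotic equalities in (\ref{approx_T12}).

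The resonant case does not have Theorem \ref{main_approx} at its disposal, but Theorems \ref{observation5} and \ref{observation52}, which rely only on the Kalman condition $\omega_i\ne\omega_j$, survive. They give $T_1\le c(\underline\omega)(\rho(x)-C(\underline\omega))$, hence $T(x)\le\tilde c(\underline\omega)\rho(x)$ for $\rho(x)\ge 1$, producing the lower bound $\rho(x)/T(x)\ge c(\underline\omega)$. The opposite direction $\rho(x)/T(x)\le C(\underline\omega)$ and the lower bound $\tau(x)/T(x)\ge c(\underline\omega)$ both follow from the a priori estimate $|\dot\rho|\le K$: it yields $T(x)\ge\rho(x)/K$ and $\tau(x)\ge\rho(x)/K$, and the second combined with $T(x)\le\tilde c(\underline\omega)\rho(x)$ gives $\tau(x)/T(x)\ge 1/(K\tilde c(\underline\omega))$. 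I expect the main technical obstacle to be exactly the point anticipated in the Remark after Theorem \ref{main_approx}: the naive application of the asymptotic equality fails when the terminal radius of a sub-interval stays bounded. The splitting trick above, in which the "near-transition" portion is absorbed into the $o(\rho(x))$ error via the non-asymptotic Theorem \ref{observation5}, is precisely the device that removes this obstruction.
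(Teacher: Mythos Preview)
Your proposal is correct and follows essentially the same approach as the paper: you split Stage~1 at the level $R=\sqrt{\rho(x)}$, apply Theorem~\ref{main_approx} on the high sub-interval and Theorem~\ref{observation5} on the low sub-interval, bound $T_2,T_3$ via Theorems~\ref{observation52} and~\ref{main}, and handle the resonant case by replacing Theorem~\ref{main_approx} with Theorem~\ref{observation5} together with the uniform bound $|\dot\rho|\le K$. Your treatment of the lower bound on $\tau(x)$ via (\ref{approx_T2}) along the optimal trajectory is slightly more explicit than the paper's, but the idea is the same.
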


\begin{proof}
The proof is accomplished by relying upon the already proved results. Consider first the controlled motion from the value
$\rho(x)$ of the ``radius'' to  the value $\sqrt{\rho(x)}$. It follows from Theorem \ref{main_approx} that in the
non-resonant case the time spent under control (\ref{approx4}) is asymptotically equivalent to
$\rho(x)-\sqrt{\rho(x)}\sim\rho(x)$ as  $\rho(x)\to+\infty$, while for any other control, including the time-optimal one,
the time spent  is no less asymptotically. Then we move to the boundary of the inefficiency zone. It is clear, in view of
Theorem \ref{observation5}, that the motion time under control (\ref{approx4}) is $O(\sqrt{\rho(x)})$, which is
negligible compared to $\rho(x)$. The remaining two stages of the motion to zero, according to Theorems
\ref{observation52} and \ref{main} take a (uniform over all initial conditions) finite time. Therefore, they are
negligible and the total duration is asymptotically $\rho(x)$, while the optimal time is asymptotically the same.

To prove inequalities (\ref{approx_T121}) one could argue in the same way, by appealing to Theorem \ref{observation5}
instead of Theorem \ref{main_approx}.
\end{proof}

\section{Toy model: $N=1$}\label{toy}
We illustrate our previous constructions in the simplest case of a single oscillator. For a further simplification, we
assume that it has the unit frequency, so that the control system is
\begin{equation}\label{n=12model}
\begin{array}{l}
    \dot{x}=y,\\
    \dot{y}=-x+{u},\,|u|\leq1.
\end{array}
\end{equation}
We divide the entire phase space $\mathbb{R}^2$ into three domains. The ``basic'' one is the exterior of the disk
$\mathbb{B}_2$ of radius 2, wherein we apply the ``dry-friction'' control $u=-\sign(y)$. In principle, one can use a disk
$\mathbb{B}_r$ of any radius $r>1$. A substantially different control
\begin{equation}\label{n=12control}
    u(x,y)=x+6\mathfrak{T}^{-2}x-3\mathfrak{T}^{-1}y
\end{equation}
is applied in a neighborhood of zero. Here $\mathfrak{T}$ is the function of $(x,y)$ defined by equation (\ref{condu}),
where $\kappa^2=1/{\mathfrak Q_{11}}=1/6$. In this case it takes the form
\begin{equation}\label{conduu}
    \mathfrak{T}^{-2}6y^2-\mathfrak{T}^{-3}24xy+\mathfrak{T}^{-4}36x^2=1/6.
\end{equation}
The neighborhood $G_\Theta$ of zero in which this control is used, is the interior of the ellipse
\begin{equation}
\Theta^{-2}6y^2-\Theta^{-3}24xy+\Theta^{-4}36x^2=1, 
\end{equation}
where the parameter $\Theta=3^{1/4}$ is found from (\ref{condB}).
The ellipse contains the disk $\mathbb{B}_{\Lambda}$ of radius $\Lambda=(\lambda_{\rm max})^{-1/2}=0.26253\dots$, where
$\lambda_{\rm max}$ is the largest eigenvalue of the matrix of quadratic form (\ref{conduu}). The complete description of
control is as follows: in $\mathbb{R}^2\setminus(\mathbb{B}_2\cup G_\Theta)$ we apply the control $u=-\sign(y),$ in
$\mathbb{B}_2\setminus G_\Theta$ the control $u=-U\sign(y),$ where $U=\Lambda/2,$ finally, in $G_\Theta$ we apply control
(\ref{n=12control}), where $\mathfrak{T}$ satisfies (\ref{conduu}). If we would use the disk $\mathbb{B}_r,\, r>1$,
instead of $\mathbb{B}_2$ at the first stage, the parameter $U$ would be $\Lambda/r.$

\section*{Acknowledgements}

We are grateful to Alexander Barg and Nickolay Bolotnik for numerous suggestions, which help us  improve the presentation
of our work. This work was supported by the Russian Foundation for Basic Research (grants 11-08-00435, 14-08-00606 and
14-01-00476) and the Dynasty Foundation.

\newpage

{\setcounter{equation}{0} \setcounter{theorem}{0}\setcounter{lemma}{0} \setcounter{corollary}{0}
\def\theequation{A.\arabic{equation}}\def\thetheorem{A.\arabic{theorem}} \def\thelemma{A.\arabic{lemma}}
\def\thecorollary{A.\arabic{corollary}}
\newcounter{append}
\setcounter{append}{0}
\def\theappend{\Roman{append}}\refstepcounter{append}
\section*{APPENDIX \theappend. Asymptotics of the support function $H_{\mathcal{D}(T)}$}\label{support}
We present here a sketch of the proof of Theorem \ref{support0}.

By definition, $H_{\mathcal{D}(T)}(p)=\sup\langle x(T),p\rangle$, where $\sup$ is taken over admissible controls, and
$x(T)$ is the state at time $T$ of the control system (\ref{syst1})--(\ref{syst2}) such that $x(0)=0$. In view of the
Cauchy formula,
\begin{equation}
\langle x(T),p\rangle=\int_0^T \langle e^{A(T-t)}Bu(t),p\rangle dt=\int_0^T u(t)B^*e^{A^*(T-t)}p dt,
\end{equation}
and upon taking the supremum under the integral sign and performing a change of variables $t\mapsto T-t$, we obtain
\begin{equation}
H_{\mathcal{D}(T)}(p)=\int_0^T \sup_{|u(t)|\leq1} u(t)B^*e^{A^*(T-t)}p dt=\int_0^T |B^*e^{A^*t}p| dt.
\end{equation}
In coordinates $\xi_i,\eta_i, $ the last formula takes the form
\begin{equation}
\label{spt}H_{\mathcal{D}(T)}(p)=\int_0^T \left|\sum_{i=1}^N\eta_i\cos\omega_it+\omega_i^{-1}\xi_i\sin\omega_it\right|
dt.
\end{equation}
This expression represents an integral of the function
\begin{equation}
f(\varphi)=\left|\sum_{i=1}^N\eta_i\cos\varphi_i+\omega_i^{-1}\xi_i\sin\varphi_i\right|
\end{equation}
taken over the rectilinear winding $\varphi_i(t)=\omega_it$ of the torus $\mathcal{T}=(\mathbb{R}/2\pi\mathbb{Z})^N$ with
angular coordinates $\varphi_i$.
 Suppose that the system of oscillators is nonresonant, i.e., condition (\ref{resonance}) is fulfilled.
Then \cite{Arnold}, the time average $\lim\limits_{T\to\infty}\frac1T\int_0^T f(\varphi(t)) dt$ coincides with the space
average $\int_{\mathcal{T}}f(\varphi)d\varphi$. In order to prove Theorem \ref{support0}, we note that
\begin{equation}
\eta_i\cos\varphi_i+\omega_i^{-1}\xi_i\sin\varphi_i=z_i\cos(\varphi_i+\alpha_i), 
\end{equation}
where $\alpha=(\alpha_i)$ is a
constant point of the torus. Therefore
\begin{equation}
\int_{\mathcal{T}}f(\varphi)d\varphi=\int_{\mathcal{T}}f(\varphi-\alpha)d\varphi=\int_{\mathcal{T}}\left|\sum_{i=1}^N
z_i\cos\varphi_i\right|d\varphi.
\end{equation}
Thus,
\begin{equation}
\lim\limits_{T\to\infty}\frac1T H_{\mathcal{D}(T)}(p)=\int_{\mathcal{T}}\left|\sum_{i=1}^N
z_i\cos\varphi_i\right|d\varphi
\end{equation}
which is the claim of Theorem \ref{support0}.

\refstepcounter{append}
\section*{APPENDIX \theappend. Elliptic integrals}\label{apelliptic}
Here we study our basic function (\ref{H}) in the case $N=2$ when it belongs to the realm of elliptic functions. In this
case,
\begin{equation}\label{ellptic1}
    \frac{\partial {\mathfrak H}}{\partial z_i}=\frac{1}{(2\pi)^2}\iint \cos\varphi_i \sign(z_{1}\cos\varphi_{1}+z_{2}
    \cos\varphi_{2})d\varphi_1 d\varphi_2.
\end{equation}
To fix ideas, consider the case $i=1$ and perform the inner integration over $\varphi_{2}$. Taking positivity of $z_2$
into account, we have to compute the integral
\begin{equation}\label{ellptic3}
    \frac{1}{2\pi} \int_0^{2\pi}
    \sign(-C+\cos\varphi_{2})d\varphi_{2}=\frac{2}{\pi}\arccos C-1, \mbox{ where }|C|\leq1,
\end{equation}
where $C=k\cos\phi_1,$ $k=-z_1/z_2$. One can assume, by making an interchange of the indices if necessary, that $|k| \leq
1$. We note that this assumption introduces a  ``disparity'' between $z_1$ and $z_2$. From equation (\ref{ellptic3}), we
obtain that if $|k|\leq1$, then
\begin{equation}\label{ellptic4}
    \frac{\partial {\mathfrak H}}{\partial z_1}=\frac{1}{\pi^2}\int_0^{2\pi}\cos\varphi_1\arccos (k\cos\varphi_1)d\varphi_1,
\end{equation}
since $\int_0^{2\pi}\cos\varphi_1 d\varphi_1=0$. Integrating by parts, we can rewrite the integral in (\ref{ellptic4}) in
an ``elliptic'' form:
\begin{equation}\label{ellptic5}
\int_0^{2\pi}\cos\varphi\arccos (k\cos\varphi)
    d\varphi=\int_0^{2\pi}\frac{k\sin^2\varphi}{\sqrt{1-k^2\cos^2\varphi}}\,
    d\varphi.
\end{equation}
This gives the final formula for the derivative of the support function
\begin{equation}\label{ellptic6}
    \frac{\partial {\mathfrak H}}{\partial z_1}=\frac{1}{\pi^2}\int_0^{2\pi}\frac{k\sin^2\varphi}{\sqrt{1-k^2\cos^2\varphi}}\, d\varphi, \mbox{ where  }k=-z_1/z_2,
\end{equation}
valid for  $|k|\leq1$. To compute $\frac{\partial {\mathfrak H}}{\partial z_2}$, we need the inner integral
\begin{equation}\label{ellptic31}
    \frac{1}{2\pi} \int_0^{2\pi}
    \cos\varphi_{2}\sign(-C+\cos\varphi_{2})d\varphi_{2}=\frac{2}{\pi}\sin\arccos C, \mbox{ if }|C|\leq1,
\end{equation}
which gives
\begin{equation}\label{ellptic7}
    \frac{\partial {\mathfrak H}}{\partial z_2}=\frac{1}{\pi^2}\int_0^{2\pi}{\sqrt{1-k^2\cos^2\varphi}}\, d\varphi.
\end{equation}
Note that the apparent asymmetry between the integral formulas (\ref{ellptic6}) and (\ref{ellptic7}) is misleading: the
change of variables $z_1\leftrightarrows z_2$ implies the change of parameters $k\leftrightarrows k^{-1}$. Under this
change, the integrals
\begin{equation}
    I_1(k)=\int_0^{2\pi}\frac{k\sin^2\varphi}{\sqrt{1-k^2\cos^2\varphi}}\, d\varphi\mbox{ and }I_2(k)=\int_0^{2\pi}{\sqrt{1-k^2\cos^2\varphi}}\, d\varphi,
\end{equation}
regarded as (multivalued) meromorphic functions of  $k$, are transposed: $I_1(k^{-1})=I_2(k)$. The functions $I_i$ are
integrals of a meromorphic differential form
\begin{equation}
    \alpha=\frac{(1-k^2x^2)dx}{y} \mbox{ on the elliptic curve } \mathcal{E}=\{y^2=(1-x^2)(1-k^2x^2)\},
\end{equation}
taken over some pathes $\gamma_i,$ where $\gamma_1$ goes from $(-1,0)$ to $(1,0)$ and gets back, while $\gamma_2$ goes
from $(-k^{-1},0)$ to $(k^{-1},0)$ and gets back. The form $\alpha$ has a second order pole at infinity, so that it is a
differential of the second kind. The key equation (\ref{z}) that defines control (\ref{control}), has the form of
equation for $k=-z_1/z_2$:
\begin{equation}\label{ellptic_control}
    \frac{e_2}{e_1}=\frac{I_2}{I_1}{(k)}={\int_{\gamma_2}\alpha}\left\slash{\int_{\gamma_1}\alpha}\right..
\end{equation}
We note that the support function itself has the form
\begin{equation}\label{ellptic_integral}
\mathfrak{H}(z_1,z_2)=\frac{1}{\pi^2}\int_0^{2\pi}\frac{(z_2^2-z_1^2)d\varphi}{\sqrt{z_2^2-z_1^2\cos^2\varphi}} \mbox{ if
}|z_1|\leq|z_2|,
\end{equation}
and is expressed via a period of the holomorphic form $\frac{dx}{y}$ on $\mathcal{E}$.

\refstepcounter{append}
\section*{APPENDIX \theappend. Another representation of the function $\mathfrak H(z)$}\label{gothic_H}
Besides Definition (\ref{approx2N}), there is another useful representation \cite{ovseev} of the hypergeometric function
$\mathfrak {H}(z)$. Namely,
\begin{equation}\label{bessel}
\mathfrak {H}(z)=\frac1{\pi}\int_0^\infty \left({1-\prod_{i=1}^N J_0(z_i\lambda)}\right)\frac{d\lambda}{\lambda^2},
\end{equation}
where
\begin{equation}
J_0(x)=\frac{1}{\pi}\int\limits_0^{\pi}e^{ix\cos\phi}\,d\phi=\sum_{k=0}^\infty
\frac{(-1)^k}{k\,!^2}{\left(\frac{x}{2}\right)}^{2k}
\end{equation}
is the Bessel function of order zero. For any real $x$ we have
\begin{equation}\label{abs}
|x|=\frac1{2\pi}\int_{\mathbb R}\frac{1+i\lambda x -e^{i\lambda x}}{\lambda^2}\,{d\lambda},
\end{equation}
where the integral  is to be understood as $\lim_{T\to\infty}\int_{-T}^T\dots d\lambda$. Indeed, the RHS of $I(x)$ has
the property $I(\mu x)=|\mu|I(x)$ for any real $x$.   This argument proves (\ref{abs}) up to a constant factor. To
determine this factor, we consider the second (distibutional) derivative of the RHS and LHS of (\ref{abs}). This reduces
the problem to  the identity
\begin{equation}\label{abs2}
\delta(x)=\frac1{2\pi}\int_{\mathbb R}e^{i\lambda x}\,{d\lambda}
\end{equation} or, equivalently,
\begin{equation}\label{abs3}
\phi(0)=\frac1{2\pi}\int_{\mathbb R}\int_{\mathbb R}\phi(x)e^{i\lambda x}\,dx\,{d\lambda},
\end{equation}
where $\phi$ is a Schwartz function,  which is a well known formula for the  inverse Fourier transform. Therefore,
\begin{eqnarray}\label{abs22}
\int\left|\sum_{k=1}^N z_k\cos\varphi_k\right|d\varphi=\frac1{2\pi}\int_{\mathcal T}\int_{\mathbb R}\frac{e^{i\lambda
\sum_{k=1}^N z_k\cos\varphi_k}-i\lambda \sum_{k=1}^N z_k\cos\varphi_k-1}{\lambda^2}\,{d\lambda}d\varphi\nonumber\\
\nonumber =\frac1{2\pi}\int_{\mathcal T}\int_{\mathbb R}\frac{1-\prod_k e^{i\lambda
z_k\cos\varphi_k}}{\lambda^2}\,{d\lambda}d\varphi=\frac1{\pi}\int_0^\infty \left({1-\prod_{i=1}^N
J_0(z_i\lambda)}\right)\frac{d\lambda}{\lambda^2}.
\end{eqnarray}

\refstepcounter{append}
\section*{APPENDIX \theappend. Differentiability properties of  functions $\mathfrak H,H,\mathfrak R,\rho$}\label{rho}

Here we study basic analytic properties of the integral \begin{equation}\label{H}\mathfrak
H(z)=\int_{\mathcal{T}}\left|\sum_{i=1}^N z_i\cos\varphi_i\right|d\varphi\end{equation} as a function of $z\in\mathbb
R^N$ and derive  differentiability properties  of  functions $H,\rho$, and $\mathfrak{R}$.

First, it is clear that $\mathfrak H(z)$ is of class $C^1$ outside zero, and
\begin{equation}\label{gradient}\frac{\partial {\mathfrak H}}{\partial z_i}=\int_{\mathcal{T}}\sign\left(\sum_{i=1}^N
z_i\cos\varphi_i\right)\cos\varphi_i\,d\varphi,\end{equation} because the integrand in (\ref{gradient}) is bounded and
continuous with respect to $z$ outside the analytic hypersurface
\begin{equation}\label{zero}V(z)=\left\{\varphi\in\mathcal{T}:f(z,\varphi)=0\right\},\quad f(z,\varphi)=\sum_{i=1}^N
z_i\cos\varphi_i\end{equation} of $d\varphi$-measure zero (cf. \cite{federer} \S 3.1). As for the second derivatives, we
again have the integral formula
\begin{equation}\label{hessian2}
    \left\langle\frac{\partial^2 {\mathfrak H}}{\partial z^2}(z)\xi,\xi\right\rangle=\int\limits_{V(z)}\left(\sum_{i=1}^N\xi_{i}\cos\varphi_{i}\right)^2d\sigma(\varphi),
\end{equation}
where
\begin{equation}
   d\sigma(\varphi)=\frac{d\varphi}{df}=\frac{d\varphi_1\wedge\dots\wedge d\varphi_N}{(2\pi)^N df}
\end{equation}
is the canonical volume element on $V(z)$. The problem is that the positive measure $d\sigma(\varphi)$ is not necessarily
finite: there are exceptional vectors $z$ such that the integral (\ref{hessian2}) is $+\infty$ for all vectors $\xi$ not
collinear with $z$. We proceed to determine the exceptional locus. It is convenient to make the substitution
$t_i=\cos\varphi_{i}$ and assume without loss of generality that $z_N\neq0$. The measure $d\sigma$ can be rewritten as
$f(t)dt_1 \dots dt_{N-1}$, where
\begin{equation}\label{dsigma}
f(t)=z_N^{-1}{(2\pi)^{-N} \prod_{i=1}^{N-1}(1-t_i^2)^{-1/2}\left(1-\frac1{z_N^2}\left(\sum_{i=1}^{N-1}
z_it_i\right)^2\right)^{-1/2}}
\end{equation}
on the polytope defined by conditions
\begin{equation}\label{polytope}|t_i|\leq1,\,i=1,\dots,N-1,\quad\left|\sum_{i=1}^{N-1} z_it_i\right|\leq z_N.\end{equation}
If the linear forms $1\pm t_i,\,i=1,\dots,N-1$ and $1\pm \frac1{z_N}\left(\sum_{i=1}^{N-1} z_it_i\right)$ are all
different, then $f$ is Lebesgue-integrable. The opposite  happens exactly when
\begin{equation}\label{singular}
z_i=\pm z_j,\, i\neq j, \mbox{ and } z_k=0 \mbox{ for } k\neq i,j \mbox{ and } i,j,k=1,\dots,N.
\end{equation}
Then the singularity takes the nonintegrable form $(1\pm t_i)^{-1}$.  Thus, condition (\ref{singular}) determines the
exceptional locus $\sing ({\mathfrak H})$, where the quadratic form $\frac{\partial^2 {\mathfrak H}}{\partial
z^2}(z)=+\infty$ on the quotient space $\mathbb R^N/\mathbb R z$. The corresponding locus $\sing ({\mathfrak R})$ for the
dual function $\mathfrak R$ can be obtained from the set (\ref{singular}) by the gradient map $\psi(z)=\frac{\partial
{\mathfrak H}}{\partial z}(z)$.

More precisely, $\sing ({\mathfrak R})$ is the set of points $\rho\frac{\partial {\mathfrak H}}{\partial z}(z)$, where
$z\in \sing ({\mathfrak H})$  and $\rho$ is an arbitrary positive factor. Formula (\ref{gradient}) implies immediately
that $\psi$ maps the exceptional locus (\ref{singular}) into itself. Luckily, it turns out that $\frac{\partial^2
{\mathfrak R}}{\partial z^2}(e)$ is continuous everywhere outside zero so that there is no exceptional set for the dual
function. The reason is simple: $\frac{\partial^2 {\mathfrak H}}{\partial z^2}(z)=+\infty$ at singular points  which
means that $\frac{\partial^2 {\mathfrak R}}{\partial z^2}(e)=0$ at the corresponding point $e$. Indeed, this follows from
the general duality relation (cf. (\ref{rho02}))
\begin{equation}\label{relation}
1={\mathfrak R}\frac{\partial^2 {\mathfrak H}}{\partial z^2}\frac{\partial^2{\mathfrak R}}{\partial
e^2}+\frac{\partial{\mathfrak R}}{\partial e}\otimes\frac{\partial {\mathfrak H}}{\partial
    z}.
\end{equation}

An important observation is this: Consider the canonical map $\pi$ from the space of quadratic forms on $\mathbb
R^N/\mathbb R z$ of dimension $N(N-1)/2$ to the corresponding sphere $S^{N(N-1)/2-1}$ of rays. This map establishes a
correspondence between a quadratic form and  all its multiples by a positive factor. Then the map
\begin{equation}\label{map}
    z\mapsto \pi \frac{\partial^2 {\mathfrak H}}{\partial z^2}(z)
\end{equation}
is continuous. Indeed, at the singular locus the non-integrability of the measure $d\sigma$ affects the RHS of Identity
(\ref{hessian2}) like multiplication by an infinite positive scalar factor. In particular, this means that the ratio
\begin{equation}
    \left\langle\eta,\frac{\partial^2 {\mathfrak H}}{\partial z^2}(z)\zeta\right\rangle:\left\langle\zeta,\frac{\partial^2 {\mathfrak H}}{\partial z^2}(z)\zeta\right\rangle,
\end{equation}
is a continuous function of $z$. Here $\eta$ and $\zeta$ are continuous vector fields in $\mathbb R^N$ and $\zeta(z)$ is
not collinear with $z$.

The duality relation (\ref{relation}) allows one to draw a similar conclusion for the quadratic form
$\frac{\partial^2{\mathfrak R}}{\partial e^2}$.

Now we turn to singularities of the second derivatives of the dual pair of functions $H(p)={\mathfrak H}(z(p))$ and
$\rho(x)={\mathfrak R}(e(x))$. The corresponding singular locus $\sing (H)$ can include singular points of the mapping
$z:{\mathbb R}^{2N}\to {\mathbb R}^{N}$ outside the preimage $z^{-1}(\sing ({\mathfrak H}))$. A direct computation gives
the relation
\begin{equation}\label{reduction}
    \frac{\partial^2 {H}}{\partial z^2}=\frac{\partial{ z}}{\partial p}^*\frac{\partial^2 {\mathfrak H}}{\partial z^2}\frac{\partial{ z}}{\partial p}+\frac{\partial {\mathfrak H}}{\partial z}\frac{\partial^2{ z}}{\partial p^2},
\end{equation}
and from the identity $z_i=\langle Q_ip,p\rangle^{1/2}$ for a nonnegative symmetric matrix $Q_i$, we obtain that
\begin{equation}\label{derivative}
    \frac{\partial^2{ z_i}}{\partial p^2}=\frac1{z_i}\left(Q_i-\frac{Q_ip\otimes Q_ip}{z_i^2}\right).
\end{equation}
The above expression is clearly singular as $z_i\to0$ but $z_i\frac{\partial^2{ z_i}}{\partial p^2}$ is bounded (and
nonnegative). The matrix $\frac{\partial{ z}}{\partial p}$ is everywhere bounded.  Thus, in order to find singularities
of $\frac{\partial^2 {H}}{\partial z^2}$ we have to find
\begin{equation}\label{derivative2}
\lim\limits_{z_i\to0}\frac{1}{z_i}\frac{\partial {\mathfrak H}}{\partial z_i}(z).\end{equation} It is clear from
(\ref{gradient}) that $\frac{\partial {\mathfrak H}}{\partial z_i}(z)=0$ if the component $z_i=0$ because the
one-dimensional integral $\int_0^{2\pi}\cos\varphi_i\, d\varphi_i=0$. Therefore, Expression (\ref{derivative2}) equals
$\frac{\partial^2 {\mathfrak H}}{\partial z_i^2}$, and
\begin{equation}\label{derivative3}
\frac{\partial {\mathfrak H}}{\partial z}\frac{\partial^2{ z}}{\partial p^2}=\sum_{i=1}^N\frac{1}{z_i}\frac{\partial
{\mathfrak H}}{\partial z_i}\left(Q_i-\frac{Q_ip\otimes Q_ip}{z_i^2}\right)\end{equation} tends to
\begin{equation}\sum_{i=1}^N\frac{\partial^2 {\mathfrak H}}{\partial z_i^2}\left(Q_i-\frac{Q_ip\otimes Q_ip}{z_i^2}\right).\end{equation} The
last expression is a nonnegative symmetric matrix because of inequality $\frac{\partial^2 {\mathfrak H}}{\partial
z_i^2}\geq0$, implied by the convexity of ${\mathfrak H}$, and because of the Cauchy inequality. The term $
\frac{\partial{ z}}{\partial p}^*\frac{\partial^2 {\mathfrak H}}{\partial z^2}\frac{\partial{ z}}{\partial p}$ from
(\ref{reduction}) defines a strictly positive quadratic form on $\mathbb R^{2N}/\mathbb R p$.
 Therefore, outside the preimage $z^{-1}(\sing ({\mathfrak H}))$ the symmetric
matrix $\frac{\partial^2 {\mathfrak H}}{\partial z^2}(z)$ remains locally bounded and strictly positive, although it is
not continuous at points $p$, where a component $z_i(p)=0$. In view of duality relation (cf. (\ref{rho02})),
\begin{equation}\label{relation2}
1={\rho}\frac{\partial^2 {H}}{\partial p^2}\frac{\partial^2{\rho}}{\partial x^2}+\frac{\partial{\rho}}{\partial
x}\otimes\frac{\partial {H}}{\partial p}
\end{equation}
we conclude that the symmetric matrix $\frac{\partial^2 {\rho}}{\partial x^2}(x)$ is  bounded on the ``sphere''
\begin{equation}
\omega=\{x\in\mathbb R^{2N}:\rho(x)=1\},
\end{equation}
but it is discontinuous at points $x$ such that a component $e_i(x)=0$.

\refstepcounter{append}
\section*{APPENDIX \theappend. Perturbation theory of observable linear systems} \label{perturb_observ}
The subject of the Kalman observability theory is a linear time-invariant system $\dot x=\alpha x$, which is observed, so
that the vector $y=\beta x$ is the observation result. Here $\alpha$ and $\beta$ are constant matrices. The system is
said to be completely observable, if the knowledge of the curve $y(t)$ in an open time interval allows to recover $x(t)$
uniquely. We consider a perturbed situation where the observed vector has the same structure, but the vector $x$
satisfies the perturbed equation $\dot x=\alpha x+f$. Then, it is impossible to recover $x$ from $y$ precisely, but if
the perturbation $f$ is small, we can do this with a small error.

In quantitive terms, the error size is described by the following theorem.
\begin{theorem}\label{observation0}
Suppose that $\dot x=\alpha x,\,y=\beta x$ is a completely observable time-invariant linear system. The following a
priori estimate holds for a solution $z$ of $\dot z=\alpha z+f$ in the interval $I$ of {\bf integer} length:
\begin{equation}\label{observ00}
    \int_I|z|dt\leq C\left(\int_I|\beta z|dt+\int_I\left|f\right|dt\right),
\end{equation}
where the constant $C$ does not depend on the interval $I$.
\end{theorem}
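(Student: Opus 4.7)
The plan is to reduce to a unit interval by additivity, and on a unit interval to split the perturbed solution into a homogeneous part (handled by complete observability) and a forced part (handled by Duhamel's formula). More precisely, if $I$ has integer length $n$, decompose $I=\bigcup_{k=0}^{n-1}I_k$ into unit subintervals $I_k$ of length one. Since the three integrands $|z|$, $|\beta z|$, $|f|$ are additive over such a decomposition, it suffices to establish the inequality
$$\int_{J}|z|\,dt\leq C\left(\int_{J}|\beta z|\,dt+\int_{J}|f|\,dt\right)$$
with a constant $C$ independent of the position of the unit interval $J$; then summing over $k$ yields the claim for $I$ with the same $C$. Translation invariance of the underlying ODE lets me take $J=[0,1]$ once and for all.

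Next I would write $z=z_h+z_p$ on $J=[0,1]$, where $z_h(t)=e^{\alpha t}z(0)$ is the homogeneous part and $z_p(t)=\int_0^t e^{\alpha(t-s)}f(s)\,ds$ is the forced part, via the variation-of-constants formula. For the forced part I can estimate pointwise
$$|z_p(t)|\leq M\int_0^t|f(s)|\,ds,\qquad |\beta z_p(t)|\leq\|\beta\|M\int_0^t|f(s)|\,ds,$$
where $M=\sup_{\tau\in[0,1]}\|e^{\alpha\tau}\|$ is finite because $[0,1]$ is compact. Integrating over $J$ gives
$$\int_J|z_p|\,dt\leq M\int_J|f|\,dt,\qquad \int_J|\beta z_p|\,dt\leq \|\beta\|M\int_J|f|\,dt.$$

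The heart of the matter is the bound $\int_J|z_h|\,dt\leq K\int_J|\beta z_h|\,dt$ for the unperturbed trajectory $z_h$. Here I would invoke complete observability of the pair $(\alpha,\beta)$, which by Kalman's criterion means that the linear map $\xi\mapsto\beta e^{\alpha\cdot}\xi$ from $\mathbb R^n$ into $C([0,1],\mathbb R^k)$ (equivalently $L^1([0,1],\mathbb R^k)$) is injective. Both the source and the image live in finite-dimensional vector spaces, so any two norms on them are equivalent; in particular the inverse of this map from its image back to $\mathbb R^n$ is bounded. This yields $|z_h(0)|\leq K_0\int_J|\beta z_h|\,dt$, and combining with the uniform bound $|z_h(t)|\leq M|z_h(0)|$ on $J$ produces the asserted inequality with $K=MK_0$. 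Putting the pieces together via the triangle inequality,
$$\int_J|z|\,dt\leq \int_J|z_h|\,dt+\int_J|z_p|\,dt\leq K\!\left(\int_J|\beta z|\,dt+\int_J|\beta z_p|\,dt\right)+M\int_J|f|\,dt,$$
and using the bound on $\int_J|\beta z_p|\,dt$ absorbs the extra term into a constant multiple of $\int_J|f|\,dt$, delivering $(\ref{observ00})$ on $J$.

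The only non-routine step is the observability bound for $z_h$: everything else is a uniform estimate for the matrix exponential on a bounded interval together with the triangle inequality. The potential pitfall is that the natural observability inequality is the $L^2$ Gramian bound $|\xi|^2\leq C\int_0^1|\beta e^{\alpha t}\xi|^2\,dt$, so I should be careful to pass to an $L^1$ form; this is harmless because the map $\xi\mapsto\beta e^{\alpha\cdot}\xi$ has finite-dimensional range, on which $L^1$ and $L^2$ norms are equivalent. Once that conversion is in hand, additivity over unit subintervals immediately extends the estimate to intervals of arbitrary integer length with an interval-independent constant.
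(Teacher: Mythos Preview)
Your argument is correct. The reduction to a unit interval by additivity is exactly what the paper does; on the unit interval, however, your route diverges from the paper's.

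The paper's primary proof works abstractly: it considers the linear map $\Phi:z\mapsto(\beta z,\dot z-\alpha z)$ from $W^{1,1}([0,1],\mathbb{R}^n)$ into $L^1\oplus L^1$, shows via a differential-operator characterization (Lemma~\ref{observation11}: the image is cut out by an equation $Py=Qf$ with constant-coefficient operators $P,Q$) that the image is closed, and then invokes the Banach inverse operator theorem together with injectivity (which is exactly complete observability) to conclude $|z|_1\le C(|\beta z|_0+|f|_0)$. A second, more constructive proof in the scalar-observation case goes through elliptic-type a~priori estimates for the equation $Py=g$ and a Kolmogorov-type interpolation inequality.

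Your Duhamel splitting $z=z_h+z_p$ is considerably more elementary: it replaces the closed-image lemma and the open-mapping machinery by a direct pointwise bound on $z_p$ via $\sup_{[0,1]}\|e^{\alpha\tau}\|$, and handles $z_h$ by the purely finite-dimensional fact that the injective linear map $\xi\mapsto\beta e^{\alpha\cdot}\xi$ has bounded inverse on its (finite-dimensional) range in $L^1$. What the paper's approach buys is the slightly stronger conclusion with the full $W^{1,1}$-norm $|z|_1$ on the left, and a structural description of the image of $\Phi$ that is of independent interest; what your approach buys is a shorter, self-contained argument that needs no functional analysis beyond equivalence of norms in finite dimensions.
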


The proof of Theorem \ref{observation0} is based on the following Lemma:
\begin{lemma}\label{observation11}
Under the assumptions of Theorem \ref{observation0} consider the map
\begin{equation}
     \Phi:z\mapsto [y,f]=[\mathcal{C}z,\dot z-\mathcal{A}z]
\end{equation}
from $\mathbb{W}=W^{1,1}\otimes\mathbb{R}^n$ to
$\mathbb{L}=\mathcal{L}_1\otimes\mathbb{R}^m\oplus\mathcal{L}_1\otimes\mathbb{R}^n$ and its image $L= \Phi(\mathbb{W})$.
Then the image $L$ of the map $\Phi$  is closed in $\mathbb{L}$.
\end{lemma}
Here $W^{n,1}$ is the Sobolev space of functions with $n$ integrable derivatives.
\begin{proof}
We consider the subspace $M\subset L$ formed by vectors $\Phi(z)$ such that the function $z$ vanishes at 0: $z(0)=0$.
This is a closed subspace of $\mathbb{L}$, because the map $z\mapsto f=\dot z-\mathcal{A}z$ defines an isomorphism
$M\!\backsimeq\mathcal{L}_1\otimes\mathbb{R}^n$. Indeed, the Cauchy problem,
\begin{equation}
    \dot{z}=\mathcal{A}z+f,\quad z(0)=0,
\end{equation}
is correctly solvable. Another important subspace of $N\subset L$ is formed by  vectors $\Phi(z)$ such that $\dot
z-\mathcal{A}z=0$. It is also closed in  $\mathbb{L}$, because it it is finite dimensional $(\dim N=n)$. Since $L$ is a
direct sum of $M$ and $N$, it is  closed in $\mathbb{L}$.
\end{proof}

It is easy to derive Theorem \ref{observation0} from Lemma \ref{observation11}: The map $\Phi:\mathbb{W}\to L$ is a
continuous linear map. By Lemma \ref{observation11} the image $L$ is closed in $\mathbb{L}$. The observability condition
means that the kernel of the map $\Phi$ is zero. Hence, one can apply the Banach inverse operator theorem and conclude
that
\begin{equation}\label{observ0}
    |z|_1\leq c(|\mathcal{C}z|_0+|\dot z-\mathcal{A}z|_0).
\end{equation}
Here $c$ is the norm of the inverse operator $\Phi^{-1}$, and
\begin{equation}
    |z|_n=\sum_{k=0}^n\int_0^1\left|\frac{\partial^k z}{\partial t^k}\right|dt
\end{equation}
is the standard Sobolev norm in $W^{n,1}([0,1])$. The conclusion of Theorem \ref{observation0} is an obvious relaxation
of inequality (\ref{observ0}).

\refstepcounter{append}
\section*{APPENDIX \theappend. Proof of Lemma \ref{canonical}}\label{brunovsky_lemma}

\begin{proof}
We begin with identity (\ref{C}). The feedback matrix $C$ can be found from the condition of the nilpotency of the matrix
$A+BC$. In other words, we require that the characteristic polynomial $P(s)=\det(s-(A+BC))$ be equal to $s^{2N}$. We
rewrite $P(s)$ in the form $\det\left((s-A)(1-(s-A)^{-1}BC)\right)$ and use the general property of determinants
\cite{kailath}:
\begin{equation}\label{detCommut}
    \det(1_n-\alpha\beta)=\det(1_m-\beta\alpha)
\end{equation}
for any pair $\alpha,\beta$ of matrices of size $n\times m$ and $m\times n$, respectively. By applying (\ref{detCommut})
to the pair
\begin{equation}
    \alpha=(s-A)^{-1}B,\quad \beta=C
\end{equation}
we obtain that
\begin{equation}\label{det2}
    P(s)=\det(s-A)-CF(s,A)B,
\end{equation}
where $F(s,A)=[\det(s-A)](s-A)^{-1}$. Note that elements of $F(s,A)$ are polynomials of degree less than $2N$ in $s$,
because for they are cofactors to some elements of the matrix $(s-A)$. Then $CF(s,A)B$ is a {\em scalar} polynomial with
the same bound for the degree. If the matrix $C$ is given by (\ref{C}), then $CF(s,A)B$ has the form $\sum
{c_k}{\prod_{i\neq k}(s^2+\omega_i^2)},$ and $\det(s-A)=\prod_{i=1}^N(s^2+\omega_i^2)$. Therefore, equation $P(s)=s^{2N}$
is equivalent to the following identity:
\begin{equation}\label{interpol}
    \prod_{i=1}^N(s^2+\omega_i^2)-s^{2N}=\sum {c_k}{\prod_{i\neq k}(s^2+\omega_i^2)}.
\end{equation}
This is the Lagrange interpolation formula for the polynomial $f(\lambda)=\prod_{i=1}^N(\omega_i^2+\lambda)-\lambda^N$ of
degree $N-1$ with nodes $\lambda=-\omega_i^2,\,i=1,\dots,N$, which implies (\ref{C}).

We prove statements (\ref{AB}) and (\ref{e}) simultaneously. We already know that the matrix ${\widetilde A}=A+BC$ is
nilpotent: ${\widetilde A}^{2N}=0$. Define a new basis by formula (\ref{e}):
$\mathfrak{e}_i=\frac{(-1)^{i-1}}{(i-1)!}{\widetilde A}^{i-1}B$ for $i=1,\dots,2N.$ The fact that the vectors
$\mathfrak{e}_i$ form a basis follows from the complete controllability of system (\ref{syst1})--(\ref{syst2}). It is
clear that ${e}_1=B$ and ${\widetilde A}\mathfrak{e}_i=-i \mathfrak{e}_{i+1}$ for $i<2N$. For $i=2N$ it follows from the
nilpotency of ${\widetilde A}$ that ${\widetilde A}\mathfrak{e}_{2N}=\frac{(-1)^{2N-1}}{(2N-1)!}{\widetilde A}^{2N}B=0$.
This shows that the matrix ${\widetilde A}$ has canonical form (\ref{AB}) in the basis (\ref{e}).

We show now that the matrix $D$ can be represented as block-matrix (\ref{d}). The vectors $\mathfrak{e}_{i}$ are, by
definition, the columns of $D$. Denote by $\lambda$ and $\omega^2$ the diagonal matrices
\begin{equation}
    \lambda=\diag(\lambda_1,\lambda_1,\dots,\lambda_N,\lambda_N) ,\quad
    \omega^2=\diag(\omega^2_1,\omega^2_1,\dots,\omega^2_N,\omega^2_N),
\end{equation}
where the scalar $\lambda_k$ is defined in (\ref{d}). It is obvious that $CB=0.$ Denote ${\widetilde A}B=AB$ by $B'$. It
is clear that $\mathfrak{e}_{1}=B$, and $\mathfrak{e}_{2}=-B'$. We compute $CB'=\sum_{i=1}^N c_i,$ where $c_i$ is defined
in (\ref{C}). We show that $\sum_{i=1}^N c_i=\sum_{i=1}^N \omega^2_i$. To do this, we divide both sides of
(\ref{interpol}) by $s^{2N-2}$ and pass to the limit $s\to\infty$. We get $\sum_{i=1}^N c_i$ in the RHS, and
$\sum_{i=1}^N \omega^2_i$ in the left-hand side. Now we can compute ${\widetilde
A}B'=A^2B+BCB'=-\omega^2B+\left(\sum_{i=1}^N \omega^2_i\right)B=\lambda B$ and ${\widetilde A}^2B'={\widetilde A}\lambda
B=\lambda{\widetilde A} B=\lambda B'$. Therefore, we conclude by induction that
\begin{equation}
    \mathfrak{e}_{2k-1}=\frac{(-1)^{k-1}}{(2(k-1))!}\lambda^{k-1}B \mbox{ and }\mathfrak{e}_{2k}=-\frac{(-1)^{k-1}}{(2k-1)!}\lambda^{k-1}B',
\end{equation}
which is equivalent to the block representation (\ref{e})--(\ref{D}) of the matrix $D$.
\end{proof}

\refstepcounter{append}
\section*{APPENDIX \theappend. Proof of Theorem \ref{main2}}\label{proof_2}

\begin{proof}
Consider orthogonal polynomials (shifted Jacobi polynomials) with respect to the measure $d\mu=(1-x)dx$ in the interval
$[0,1]$. The required polynomials $P_n$ are given by the Rodrigues formula
\begin{equation}\label{rodrigue}
    {P_n}(x) = \frac{1}{n!(1-x)}{\partial}^n  \left[(1-x) (x-x^2)^n \right],
\end{equation}
where ${\partial}=\frac{d}{dx}.$ Indeed, $\int {P_n}(x)x^m d\mu=0$ for $m<n$ since
\begin{equation}
\begin{array}{l}
    \int {P_n}(x)x^m d\mu=\frac1{n!}\int {\partial}^n \left[(1-x) (x-x^2)^n \right]x^m dx= \\[1em]
    \frac{(-1)^n}{n!}\int \left[(1-x) (x-x^2)^n \right]{\partial}^n x^m dx=0, \\
\end{array}
\end{equation}
where we used the identity ${\partial}^nx^m=0$ and integration by parts. Therefore, the polynomials $P_n$ and $P_m$ are
orthogonal if $n\neq m$. One can easily compute the leading coefficient $c_n$ of $P_n$. It is the same as the leading
coefficient of the polynomial ${\pi_n}(x) = \frac{(-1)^n}{n!x} {\partial}^n \left[x^{2n+1} \right]$, which obviously
equals $\frac{(-1)^n(2n+1)!}{n!(n+1)!}$. The square norm of the polynomial $P_n$ is
\begin{equation}
\begin{array}{l}
    \int {P_n}^2 d\mu=c_n\int {P_n}(x)x^n(1-x)dx= \\[1em]
    \frac{(-1)^n(2n+1)!}{n!(n+1)!}(-1)^n\int_0^1
    \left[x^n(1-x)^{n+1}\right]dx=\frac{(2n+1)!}{n!(n+1)!}B(n+2,n+1), \\
\end{array}
\end{equation}
where $B(\alpha,\beta)=\frac{\Gamma(\alpha)\Gamma(\beta)}{\Gamma(\alpha+\beta)}$ is the Euler $B$-function. Finally, we
have
\begin{equation}
    \int {P_n}^2 d\mu=\frac{(2n+1)!}{n!(n+1)!}\frac{\Gamma(n+2)\Gamma(n+1)}{\Gamma(2n+3)}
    =\frac{(2n+1)!}{n!(n+1)!}\frac{(n+1)!n!}{(2n+2)!}=\frac1{2(n+1)}.
\end{equation}
It follows immediately from the Rodrigues formula (\ref{rodrigue}) that $P_n\in\mathbb{Z}[x]$ because the operator
$\frac{1}{n!}{\partial}^n $ maps $\mathbb{Z}[x]$ into itself. This fact can be rewritten in the form
$P_{i-1}=\sum{a_{ij}m_j}$, where $m_j=x^{j-1}$ are elements of the standard monomial basis and $A=(a_{ij})$ is an integer
(triangular) matrix of coefficients of the Jacobi polynomials. The above formulas for the scalar product can be rewritten
in the form
\begin{equation}
    A\mathfrak{q} A^*=\diag\left(\frac1{2k}\right)_{k=1}^n,
\end{equation}
or, which is the same, in the form
\begin{equation}\label{finalQ}
    \mathfrak{Q}=A^*\diag\left(\frac{1}{2k}\right)A.
\end{equation}
The last formula obviously imply that $\mathfrak{Q}$ is an even integer matrix.
\end{proof}

\refstepcounter{append}
\section*{APPENDIX \theappend. Proof of Theorem \ref{main3}}\label{proof_3}

\begin{proof}
From (\ref{finalQ}), we obtain that
\begin{equation}
    \mathfrak{Q}_{11}=\sum_{k=1}^{2N}2ka_{k1}^2,
\end{equation}
where $a_{k1}=P_{k-1}(0)$ is the constant term of the Jacobi polynomial of degree $k-1$. This term is always 1 for the
following reason. It follows from the Rodrigues formula (\ref{rodrigue}) that
\begin{equation}
    {P_n}(0) = \frac{1}{n!}{\partial}^n \left[ (x-x^2)^n \right]\vert_{x=0}.
\end{equation}
But
\begin{equation}
    {\partial}^n \left[ (x-x^2)^n \right]\vert_{x=0}=(1-x)^n{\partial}^n \left[ x^n \right]\vert_{x=0}=n!.
\end{equation}
Therefore, $a_{k1}=1$ for all $k$, and
\begin{equation}
    \mathfrak{Q}_{11}=\sum_{k=1}^{2N}2k=2N(2N+1).
\end{equation}
\end{proof}

}
\end{document}